\documentclass[a4paper, 11pt, twoside]{article}
\usepackage[a4paper,margin=2.5cm]{geometry}
\usepackage[utf8]{inputenc}
\usepackage{lmodern}
\usepackage{amssymb}
\usepackage[english]{babel}
\usepackage{graphicx} 
\usepackage{epsfig}
\usepackage{epstopdf}
\usepackage{amsfonts}
\usepackage{amsmath}  
\usepackage{braket}
\usepackage{amsthm} 
\usepackage{chngpage}
\usepackage[usenames,dvipsnames]{xcolor}
\usepackage{mathrsfs}
\usepackage{enumitem}
\usepackage{comment}
\usepackage{chngcntr}
\usepackage[final]{microtype}

\usepackage{todonotes}
\usepackage{array}
\usepackage{caption}
\usepackage{subcaption}
\usepackage[T1]{fontenc}
\usepackage{listings}
\usepackage{tikz}
\usepackage{tikz-cd}
\usepackage{hyperref}
\hypersetup{
    colorlinks,
    linkcolor={red!50!black},
    citecolor={blue!50!black},
    urlcolor={blue!80!black}
}
\usepackage{cleveref}

\lstdefinelanguage{Julia}
  {morekeywords={abstract,break,case,catch,const,continue,do,else,elseif, 
      end,export,false,for,function,immutable,import,importall,if,in, 
      macro,module,otherwise,quote,return,switch,true,try,type,typealias, 
      using,while}, 
   sensitive=true, 
   morecomment=[l]\#, 
   morecomment=[n]{\#=}{=\#}, 
   morestring=[s]{"}{"}, 
   morestring=[m]{'}{'}, 
}[keywords,comments,strings] 

\newcommand{\C}{\mathbb{C}}

\newcommand{\R}{\mathbb{R}}

\newcommand{\ME}{{\mathcal E}}
\newcommand{\MS}{{\mathcal S}}
\newcommand{\MX}{{\mathcal X}}
\newcommand{\MY}{{\mathcal Y}}
\newcommand{\MF}{{\mathcal F}}

\newcommand{\MET}{\text{\rm MET}}
\newcommand{\MK}{{\mathcal K}}
\newcommand{\MC}{{\mathcal C}}
\newcommand{\MCscr}{{\mathscr {C}}}
\newcommand{\MH}{{\mathcal H}}
 \newcommand{\Row}{{\text{\rm Row}}}
\newcommand{\ML}{{\mathcal L}}
\newcommand{\Z}{\mathbb Z}
\newcommand{\N}{\mathbb N}
\newcommand{\one}{{\mathbf 1}}

\newcommand{\ao}{\mathbf {1}}
\newcommand{\azero}{\mathbf{0}}

\newcommand{\tME}{\widetilde \ME}

\theoremstyle{plain}
	\newtheorem{theorem}{Theorem}[section] 	
	\newtheorem{lemma}[theorem]{Lemma}
	\newtheorem{corollary}[theorem]{Corollary}
	\newtheorem{proposition}[theorem]{Proposition}
    
     \newtheorem{definition}[theorem]{Definition}
     \newtheorem{example}[theorem]{Example}
	\newtheorem{remark}[theorem]{Remark}
    
\setlist{itemsep=2pt, topsep=3pt, parsep=0pt}

\title{\bf The Algebraic Boundary of Graph Elliptopes}
\author{Monique Laurent \thanks{Centrum Wiskunde \& Informatica (CWI), Amsterdam, and Tilburg University, \url{Monique.Laurent@cwi.nl}}
\and Francesco Maria Mascarin \thanks{MPI-MiS Leipzig, and CWI, Amsterdam,
\url{francesco.mascarin@mis.mpg.de}}
\and  Simon Telen \thanks{MPI-MiS Leipzig,
\url{simon.telen@mis.mpg.de}}
}
\date{}
 
\begin{document}
 \maketitle

\begin{abstract}
\noindent This paper studies the algebraic boundary of the elliptope $\ME(G)$ of a graph $G$. In particular, we completely characterize the algebraic boundary of $\ME(G)$ when $G$ is cycle completable. In this case, the boundary is a union of determinantal hypersurfaces and Lissajous varieties, i.e., images of rational linear subspaces under the coordinatewise cosine map. 
As an application, we show that the algebraic boundary of  $\ME(G)$ is disjoint from its interior precisely when $\ME(G)$ is a spectrahedron or, equivalently, when $G$ is a chordal graph.
A central ingredient for the defining equation of the boundary hypersurface is the cycle polynomial, which captures the algebraic boundary of the elliptope $\ME(C_n)$ of the 
 $n$-th cycle graph $C_n$. We show that the cycle polynomial of $C_n$ is the resultant of two smaller cycle polynomials. Via this result, Sylvester's determinantal formula offers an inductive method for computing the cycle polynomial which mirrors a geometric property of metric polytopes.
 We also determine the degree of the homogeneous cycle polynomial, settling an open question of Sturmfels and Uhler~(2010). 
\end{abstract}

\section{Introduction}\label{sec:introduction}

Given a (simple undirected) graph $G=(V=[n],E)$, its  {\em elliptope} $\ME(G)$ is defined as the projection onto $\R^E$ of the set $\ME_n$ of correlation matrices. In symbols, we have
\begin{align*}
\ME_n & \, = \, \{X\in \MS^n_+:     X_{ii}=1 \text{ for } i\in [n]\},\\
 \ME(G) & \,  = \,  \{x\in \R^E: \exists X \in \ME_n \text{ such that } X_{ij}=x_{ij} \text{ for } \{i,j\}\in E\}.
\end{align*}
Here, $\MS^n_+$ is the cone of $n\times n$ real symmetric matrices $X\in\MS^n$ that are  positive semidefinite, denoted as $X\succeq 0$. The elliptope $\ME(K_n)\simeq \ME_n$ of the  complete graph $G=K_n$ is a spectrahedron. It is the convex semialgebraic set consisting of the matrices $X \in \MS^n$ with ones on the diagonal and with non-negative principal minors. By the Tarksi-Seidenberg theorem, any graph elliptope $\ME(G)$ is semialgebraic as well, and $\ME(G)$ is a spectrahedral shadow by definition. Moreover, $\ME(G) \subseteq \R^E$ is convex and it has non-empty interior: ${\rm int}(\ME(G)) \neq \emptyset$. In this paper we investigate graph elliptopes from an algebraic geometry perspective. More precisely, we study their algebraic boundary $\partial_a\ME(G)$, which is the Zariski closure of the euclidean boundary $\partial\ME(G) = \ME(G) \setminus {\rm int}(\ME(G))$ in $\mathbb{C}^E$. For instance, the algebraic boundary of $\ME_n$ is given by the polynomial equation $\det(X)=0$. 

The paper  \cite{Sturmfels2010MultivariateGaussians} discusses the algebraic boundary of elliptopes of chordal graphs and cycles. It is stated in \cite[Proposition 4.1]{Sturmfels2010MultivariateGaussians} that, if $G$ is chordal, then $\partial_a \ME(G)$ is defined by the determinantal equations corresponding to the maximal cliques of $G$. In \cite[Section 4.2]{Sturmfels2010MultivariateGaussians} it is stated (without proof) that, if $G$ is a cycle $C_n$, then $\partial_a \ME(C_n)$ is defined by the so-called $n$-th cycle polynomial $\Gamma_n$.
One of our main results (Theorem \ref{thm:boundary-cycle-completable}) is that determinantal equations and cycle polynomials suffice to describe the algebraic boundary $\partial_a \ME(G)$ for any cycle completable graph $G$. This includes chordal graphs and graphs with no $K_4$-minor, as well as any clique sum of such graphs (see Theorem \ref{theo:cycle-completable}). As a byproduct, we show that the elliptope $\ME(G)$ is a spectrahedron if and only if its algebraic boundary does not intersect its interior, which holds precisely when $G$ is chordal (Corollary \ref{cor:spectrahedron}). In particular, the elliptope of a cycle $C_n$ with length $n\ge 4$ is not a spectrahedron, which rectifies a claim made in \cite[Section~4.2]{Sturmfels2010MultivariateGaussians}. 

The affine hypersurface defined by $\Gamma_n = 0$ ($n\ge 3$) is the image of a rational hyperplane under the coordinatewise cosine map. Here ``rational'' means defined over $\mathbb{Q}$. Such varieties were recently studied in  \cite{BelAfiaMeroniTelen2025Chebyshev,mascarin2025lissajousvarieties}. In particular, \cite{mascarin2025lissajousvarieties} introduces the name ``Lissajous variety'' for the cosine of a rational affine space $L$. If $L$ is the row space of the incidence matrix of the cycle $C_n$, then one obtains the Lissajous variety $\ML_n = \cos(L) =  V(\Gamma_n)$. 
So, Lissajous hypersurfaces defined by cycle polynomials play a central role in describing algebraic boundaries of graph elliptopes.
This connection is readily seen from the well-known result from \cite{Barrett1993realpositivedefinite}: the elliptope of a cycle is the image of a convex polytope, namely the metric polytope, under the cosine map.

Cycle polynomials have remarkable properties. They are symmetric and, as shown in \cite{mascarin2025lissajousvarieties}, their associated Lissajous hypersurface $\ML_n$  admits a determinantal representation using multiplication matrices in a suitable polynomial algebra.
 For $n=3$, $\ME(C_3)\simeq\ME_3$ and  $\Gamma_3$ recovers the  polynomial $\det(X)$ for $X\in\ME_3$. For $n\ge 4$, we show that  the cycle polynomial $\Gamma_n$ can be obtained by computing  the resultant of two smaller cycle polynomials $\Gamma_p$ and $\Gamma_q$ where $n=p+q-2$ and $p,q\ge 3$. In a nutshell, one should think of adding a chord to $C_n$, which creates two smaller cycles sharing the chord as a common edge. The cycle polynomial of $C_n$ is obtained by eliminating the variable associated to the new chord from the two cycle polynomials of the smaller cycles. 
 
\begin{example} \label{ex:intro}
The elliptope $\ME_3 = \ME(C_3) = \ME(K_3) \subseteq \mathbb{R}^3$ is the semialgebraic set 
\[ \Big \{ (x_1,x_2,y) \in \mathbb{R}^3 \, : \, X\, = \,  \left ( \begin{smallmatrix} 1 & x_1 & x_2 \\ x_1 & 1 & y \\ x_2 & y & 1 \end{smallmatrix} \right ) \succeq 0 \Big \}. \]
Its algebraic boundary is Cayley's cubic surface, given by $f(x_1,x_2,y) = \det (X) = 0$, with $f(x_1,x_2,y)=-\Gamma_3(x_1,x_2,y)$ (see Example \ref{ex:smallGamma}). 
This surface is shown at the bottom-right of Figure \ref{fig:E4} (rightmost picture in the third row).
The elliptope $\ME(C_4)$ of the cycle $C_4$ consists of all points $(x_1,x_2,x_3,x_4) \in \mathbb{R}^4$ for which the partially filled matrix 
\[ \left ( \begin{smallmatrix}
1 & x_1 & ? & x_2 \\ 
x_1 & 1 & x_3 & ? \\ 
? & x_3 & 1 & x_4 \\ 
x_2 & ? & x_4 & 1
\end{smallmatrix} \right ) \]
admits a positive semidefinite completion. Its algebraic boundary is given by the polynomial $(1-x_1^2)(1-x_2^2)(1-x_3^2)(1-x_4^2) \cdot \Gamma_4$, where the fourth cycle polynomial $\Gamma_4$ is the resultant 
\[ \Gamma_4 \, = \, {\rm Res}_y(f(x_1,x_2,y),f(x_3,x_4,y)). \]
The expansion of $\Gamma_4$ is displayed in Example \ref{ex:smallGamma}. The four factors $(1-x_i^2)$ come from the maximal cliques of $C_4$, i.e., from the four edges. 
The point $x = {\bf 0}$ is an interior point of $\ME(C_4)$, and a zero of $\Gamma_4$,  i.e.,  $\Gamma_4({\bf 0}) = 0$. The algebraic boundary of a spectrahedron does not intersect the interior, which shows that $\ME(C_4)$ is not a spectrahedron (see Corollary \ref{cor:spectrahedron}). The second row of Figure~\ref{fig:E4} shows three different slices in $\R^3$ of the elliptope $\ME(C_4)\subseteq \R^4$, obtained by setting $x_4=0,\tfrac{1}{2},1$, respectively, and its third row shows   the corresponding slices of the real variety 
$V_\R(\Gamma_4)=V(\Gamma_4)\cap\R^4$. The first row of Figure~\ref{fig:E4} shows the corresponding slices of the metric polytope $\MET(C_4)=\tfrac{1}{\pi}\arccos(\ME(C_4))$, obtained by setting $\tfrac{1}{\pi}\arccos x_4=\tfrac{1}{2},\tfrac{1}{3},0$. See Section~\ref{sec:prel-elliptope} for details on the metric polytope.
\end{example}

\begin{figure}
\centering
\includegraphics[height = 5cm]{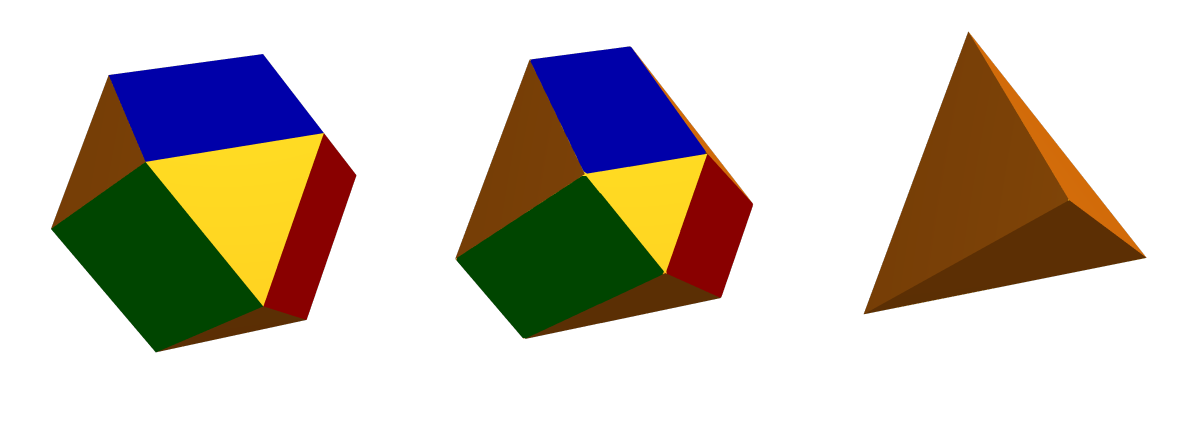} 
\includegraphics[height = 5cm]{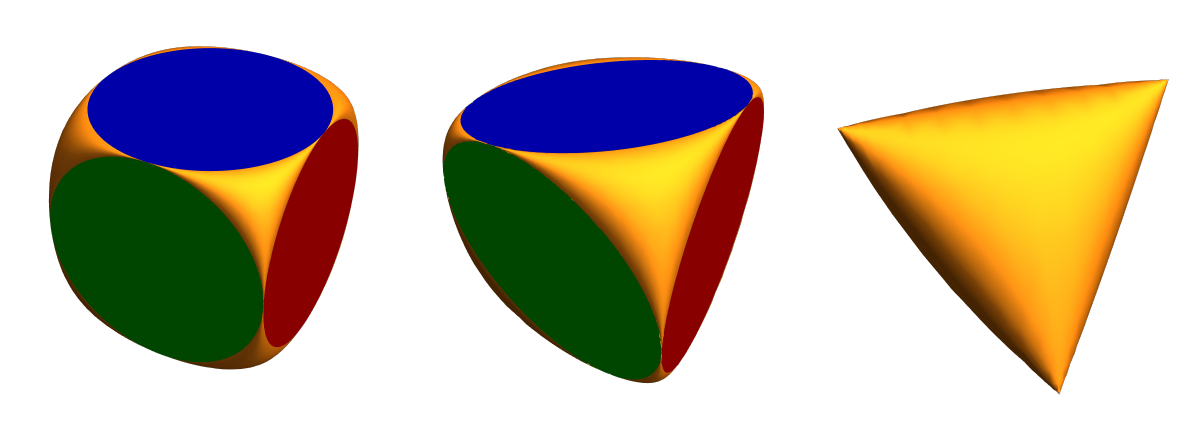} 
\includegraphics[height = 5cm]{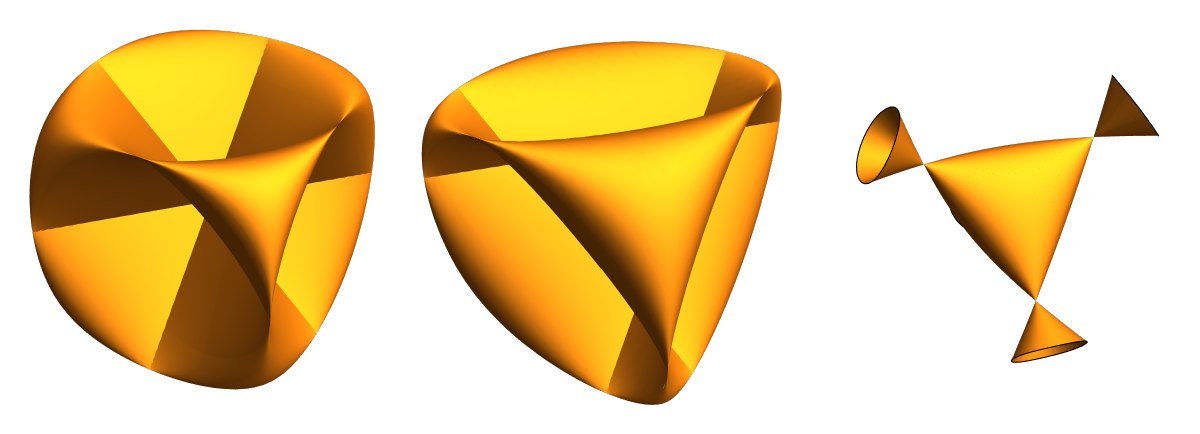} 
\caption{Three slices of $\ME(C_4)$ (second row) and the corresponding slices of $V_\R(\Gamma_4)$ (third row) 
at $x_4 = 0,\tfrac{1}{2},1$,
together with the matching slices of $\MET(C_4)$ at ${1\over \pi}\arccos x_4 = \tfrac{1}{2},\tfrac{1}{3},0$ (first row).}
\label{fig:E4}
\end{figure}

In \cite{mascarin2025lissajousvarieties} the authors compute the degree of $\Gamma_n$. Here, we prove that the degree of its homogeneous analog $\Gamma_n^h$ is $n \cdot 2^{n-3}$ (Theorem \ref{thm:degGammah}), answering an open question posed in \cite[Conjecture~4.9]{Sturmfels2010MultivariateGaussians}. The polynomial $\Gamma_n^h$ arises as follows: for the cycle $C_n=(V=[n],E_n)$, 
the cone $\MCscr(C_n)=\pi_{V \cup E_n}(\MS^n_+) \subseteq \R^{V\cup E_n}$ is the image of the positive semidefinite cone under the projection $\pi_{V \cup E_n}(X) = (\pi_V(X), \pi_{E_n}(X)) \in \R^{V \cup E_n}$, where $\pi_V(X) = (X_{11}, \ldots, X_{nn})$ is the diagonal of $X$. The algebraic boundary of $\MCscr(C_n)$ involves the polynomial $\Gamma_n^h$. Our strategy for computing the degree of $\Gamma_n^h$ implies a stronger result, which states the multidegree of $\Gamma_n^h$ with respect to a natural $\mathbb{Z}^n$-grading (Theorem \ref{thm:multidegGammah}). 

We now give an overview of relevant background literature and applications. 

\paragraph{Some background literature and applications.} Graph elliptopes $\ME(G)$ arise in several different contexts. First, as we saw in Example \ref{ex:intro}, they are directly connected to the {positive semidefinite (PSD) completion problem}. Given a partially filled correlation matrix whose   $(i,j)$-th entries  are known for $\{i,j\} \in E$, the PSD completion problem asks whether it can be completed to a positive (semi)definite matrix by filling in the missing $(i,j)$-th entries for $\{i,j\} \notin E$. There is a vast literature on this problem, starting with the seminal paper \cite{GJSW_1984}, which  answers the problem for chordal graphs. The case of cycles is settled in \cite{Barrett1993realpositivedefinite}, where positive (semi)definite completability is characterized using a cosine parametrization. The larger class  of graphs with no $K_4$-minors is treated in \cite{Laurent1997realpositivesemidefinite} and cycle completable graphs are covered in \cite{BJL-cycle-completability}. 
The main results are recalled in Section \ref{sec:prel-elliptope}.
Graph elliptopes are also used  within combinatorial optimization. Indeed, the set $\ME_n$ of correlation matrices defines a natural semidefinite programming relaxation for the maximum cut problem, which has been used by Goemans and Williamson \cite{GW_1995} for their celebrated $0.878$-approximation algorithm. In the same way, $\ME(G)$ provides a relaxation for the maximum cut problem on $G$. For  details see, e.g.,  \cite{Laurent_1997,Laurent-Poljak_1995} and references therein. 

The PSD matrix completion problem where one does not fix the diagonal entries to be $1$ is also widely investigated, see   \cite{GJSW_1984,Laurent_2001} and references therein.
This  leads to   the cone
 $\MCscr(G)\subseteq \R^{V\cup E}$ obtained by projecting the cone $\MS^n_+$ onto the subspace indexed by the vertices and edges of $G$. The elliptope $\ME(G)$ is an affine section of this cone.
There are tight links with the distance geometry problem and Euclidean distance matrix completion problems, see \cite{Laurent_1998,Laurent_2001}. 
In addition, the cone $\MCscr(G)$ is used in algebraic statistics, where  it is directly relevant to the cone of sufficient statistics for the undirected Gaussian graphical model of $G$, as explained in  \cite{Sturmfels2010MultivariateGaussians}. 
In this context, the existence of a maximum likelihood estimate is equivalent to the condition that the set $\pi_{V \cup E}^{-1}(\mathbb{S}) {\cap} {\cal S}^n_{+}$ is non-empty, where $\mathbb{S}$ is the sample covariance matrix. The close relation between $\MCscr (G)$ and $\ME(G)$ leads the authors of \cite{Sturmfels2010MultivariateGaussians} to consider the algebraic boundary of $\MCscr(C_n)$ in \cite[Section 4.2]{Sturmfels2010MultivariateGaussians}, invoking results from \cite{Barrett1993realpositivedefinite}.
In \cite{BS-2020} the authors investigate the quality of the outer approximation of the cone $\MCscr(G)$ obtained by requiring positive semidefiniteness of the fully specified submatrices supported on the cliques of $G$ (the conic analog of the set $S(G)$ in (\ref{eq:SG})); they give an explicit analysis for cycle completable graphs and their suspensions.

The problem of finding positive semidefinite matrix completions of low rank has been investigated in \cite{NLV-Fields,NLV-JCTB,LV-MP-2014}; this connects to low dimensional Euclidean embeddings \cite{Belk,Belk-Connelly} and falls within the broader context of low rank solutions to semidefinite programs (see, e.g., \cite{Pataki} and references therein). 
It is known that any point $x\in \ME(C_n)$ has a positive semidefinite completion of rank at most 3 and a characterization is given for the points admitting such a completion of rank at most $2$ (see \cite[Section 2.2]{NLV-Fields}, or \cite[Lemma 4.6]{BS-2020}). We come back to this  in Remark~\ref{rem:cosLn}.

Geometric results on graph elliptopes and their polar bodies are presented in \cite{Solus-Uhler-Yoshida}.
In particular, for a graph $G$ with no $K_5$-minor, a connection is established between the facets of its cut polytope and the extreme rays of the polar body of $\ME(G)$. For such graphs it is known that the cut polytope coincides with the metric polytope and thus its facets arise from edge inequalities and cycle inequalities, as recalled in Section \ref{sec:prel-elliptope}.

Finding the algebraic boundary $\partial_a S$ of a convex semialgebraic set $S$ is a standard task in convex algebraic geometry, see  \cite{sinn2015algebraicboundary} for  details and applications.  
There,  it is shown that the algebraic boundary of a full-dimensional convex semialgebraic set $S$ is a hypersurface, i.e., its irreducible components have codimension one (see Proposition \ref{lemma:algboundconvex}).
So, the hypersurface $\partial_a S$ can be seen as a nonlinear generalization of the facet hyperplane arrangement for a convex polytope.

As mentioned above, the cycle polynomial $\Gamma_n$ is an important ingredient for describing the algebraic boundary $\partial_a \ME(G)$ of the  elliptope $\ME(G)$, when $G$ is cycle completable. 
This polynomial turns out to be interesting in its own right and, in fact, we have dedicated Section~\ref{sec:cyclepolynomial} to proving several of its remarkable properties. This part of the project was facilitated by recent results in \cite{mascarin2025lissajousvarieties} on the above mentioned class of varieties called Lissajous varieties, of which the zero locus of $\Gamma_n$ is an example (see Section~\ref{sec:lissajous}).
Lissajous varieties arising from graphs are investigated in  \cite[Section 4]{mascarin2025lissajousvarieties}, where they are used for computing equilibria in Kuramoto dynamics.

\paragraph{Organization of the paper.}
Section \ref{sec:prelim} contains preliminaries on graph elliptopes and Lissajous varieties, and introduces notation used throughout the paper. Section \ref{sec:cyclepolynomial} proves our new results related to cycle polynomials. The main theorems provide a resultant construction (Theorem \ref{thm:resultant}) and the degree of the homogeneous cycle polynomial (Theorem~\ref{thm:degGammah}). Section \ref{sec:boundary} includes our main results on the algebraic boundary of $\ME(G)$. The main result is Theorem~\ref{thm:boundary-cycle-completable}, which characterizes the algebraic boundary of $\ME(G)$ for cycle completable graphs (as a consequence of a more general result). 
This allows us  to characterize the graph elliptopes that are spectrahedra (Corollary~\ref{cor:spectrahedron}). In addition, when $G$ is cycle completable, we derive the algebraic boundary of the cone $\MCscr(G)$ (Corollary \ref{cor:coneCG}) and the algebraic boundaries of $\MCscr(\nabla G)$, $\ME(\nabla G)$ of the suspension graph $\nabla G$, obtained by   adding a new node adjacent to all nodes of $G$ (Proposition \ref{prop:suspensionboundary}).

\section{Preliminaries} \label{sec:prelim}

\subsection{Preliminaries on elliptopes}\label{sec:prel-elliptope}

A \emph{correlation matrix} is a (real symmetric) positive semidefinite matrix whose diagonal entries are all equal to one. 
We denote by $\mathcal{E}_n$ the set of all $n \times n$ real correlation matrices.
Let $G=(V=[n],E)$ be a simple, undirected graph; it is sometimes convenient to let $V(G)$ and $E(G)$ denote   the vertex set and the edge set of $G$.
The  {\em elliptope} of $G$, denoted by $\mathcal{E}(G)$, is defined as the projection of $\mathcal{E}_n$ onto the subspace $\R^E$ indexed by the edge set of $G$, i.e.,
\[
\mathcal{E}(G) = \pi_E({\cal E}_n) = \{ x\in \R^E : \exists X\in \mathcal{E}_n \text{ such that } X_{ij}=x_{ij} \text{ for } \{i,j\}\in E\}.
\]
Throughout, $\pi_E$ denotes the projection onto the subspace indexed by the edge set $E$ of $G$. 
In particular, $\ME_n$ corresponds to the elliptope of the complete graph $K_n$ (after identifying a correlation matrix with the vector consisting of its off-diagonal entries). Deciding whether a vector $x\in\R^E$ belongs to $\ME(G)$ is an instance of the {\em positive semidefinite completion problem}, which asks to determine whether the partially filled matrix with ones on the diagonal and entries $x_e$ at the positions indexed by the edges of $G$ can be completed to a positive semidefinite matrix.
This problem has been widely studied, starting with the work of Grone et al.~\cite{GJSW_1984} which proves the following necessary condition: If  $x\in \ME(G)$, then 
\begin{align}\label{eq:clique}
x[K]\succeq 0 \text{ for all cliques } K \text{ of } G.
\end{align}
This is known as the {\em clique condition}. Recall that a {\em clique} of $G$ is a set $K\subseteq V$ of vertices that are pairwise connected by an edge in $G$. In \eqref{eq:clique}, $x[K]$ denotes the symmetric matrix indexed by $K$ with an all-ones diagonal and whose off-diagonal entries are provided by the entries of $x$. The vertices of each edge in $E$ form a clique, so the clique condition implies that $\ME(G)$ is contained in the hypercube  $[-1,1]^E$. Clearly, in (\ref{eq:clique}), it suffices to consider the maximal cliques of $G$. Let $\MK_G$ denote the set of maximal cliques of $G$.  
Grone et al.  \cite{GJSW_1984}  show that the clique condition (\ref{eq:clique}) is necessary \emph{and sufficient} for membership in $\ME(G)$ if and only if the graph $G$ is chordal. 

\begin{theorem}
\label{theo:chordal}\cite{GJSW_1984}
For a graph $G=(V,E)$, define the spectrahedron
\begin{align}\label{eq:SG}
S(G)\, =\, \{x\in\R^E: x[K]\succeq 0\ \text{ for all } K\in \MK_G\}.
\end{align}
We have the inclusion $\ME(G)\subseteq S(G)$, and equality holds if and only if $G$ is a chordal graph.
\end{theorem}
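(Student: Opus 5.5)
The plan has three parts: the inclusion, sufficiency for chordal graphs, and necessity via an induced cycle.

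\emph{Inclusion $\ME(G)\subseteq S(G)$.} If $x=\pi_E(X)$ with $X\in\ME_n$, then for every clique $K$ the matrix $x[K]$ equals the principal submatrix $X[K]$. Principal submatrices of a positive semidefinite matrix are positive semidefinite, so $x[K]\succeq 0$.

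\emph{Chordal implies $S(G)\subseteq \ME(G)$.} I argue by induction on the number of non-edges, using the fact that a chordal graph $G\neq K_n$ admits an edge $e=\{u,v\}\notin E$ such that $G+e$ is chordal and $e$ lies in a unique maximal clique $K=C\cup\{u,v\}$ of $G+e$. Here $C$ is the common neighbourhood of $u,v$, which is a clique. This is the standard statement that chordal graphs are reached from $K_n$ by removing edges one at a time while staying chordal; it follows from the existence of simplicial vertices / perfect elimination orderings.

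Given $x\in S(G)$, the two matrices $A=x[C\cup\{u\}]$ and $B=x[C\cup\{v\}]$ are PSD because they lie in cliques of $G$. The one-entry completion problem on $K$ is then the $2\times 2$ block situation
\[
\begin{pmatrix} 1 & a^T & y\\ a & M & b\\ y & b^T & 1\end{pmatrix},
\]
with $M=x[C]$. I choose $y=a^TM^{+}b$, which is the maximum-entropy / Schur-complement choice. A Schur complement argument with respect to $M$, using the pseudo-inverse to handle the singular case, shows the completed matrix is PSD. One can also see this via Gram vectors: realize $C\cup\{u\}$ and $C\cup\{v\}$ as Gram vectors that agree on $C$, and take the projection formula.

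The remaining maximal cliques of $G+e$ are cliques of $G$, so the new vector lies in $S(G+e)$. By induction it lies in $\ME(K_n)$, and therefore $x\in\ME(G)$.

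\emph{Non-chordal implies $S(G)\neq\ME(G)$.} If $G$ is not chordal, it has an induced cycle $Z$ of length $k\ge 4$. I define $x_e=-1$ for one edge of $Z$, $x_e=1$ on the other edges of $Z$, and $x_e=0$ on all other edges of $G$. Since $Z$ is induced and $k\ge 4$, each clique meets $Z$ in at most one edge. Hence $x[K]$ is either the identity or, when it contains a cycle edge, $I$ plus one entry $\pm1$ in a pair of positions, and it is PSD because the other entries of that row vanish. So $x\in S(G)$.

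On the other hand, any PSD completion of the cycle entries would need Gram unit vectors with $v_1=v_2=\dots=v_k$ and $v_k=-v_1$, which is a contradiction. Hence $x\notin\ME(G)$, since the projection onto $Z$ must lie in $\ME(Z)$.

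The main obstacle is the sufficiency direction. It requires (i) the graph-theoretic lemma producing a non-edge whose addition keeps chordality with a unique new maximal clique, and (ii) the single-entry PSD completion in the possibly singular case. For (ii) I would use the Gram-vector argument, choosing the vectors for $u$ and $v$ so that they differ only in components orthogonal to $\mathrm{span}(C)$, placed in orthogonal extra coordinates. This avoids the pseudo-inverse computations.
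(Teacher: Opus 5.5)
Your proof is correct and takes essentially the route the paper points to. The paper does not reprove Theorem~\ref{theo:chordal} but cites \cite{GJSW_1984}; that proof of the chordal direction is exactly your edge-by-edge completion along a chain of chordal supergraphs (each added edge lying in a unique maximal clique) with the one-entry completion $y=a^TM^{+}b$. Your non-chordal counterexample is the paper's own vector ($+1$ on all but one edge of a chordless cycle, $-1$ on the remaining edge), padded with zeros as in the proof of Corollary~\ref{cor:spectrahedron}.
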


One of the many equivalent definitions of $G$ being chordal   is that $G$ does not contain a chordless cycle of length at least 4 (i.e., every cycle  of length at least 4 has a chord).  Here, a {\em cycle} $C$ in $G$ of length $p\ge 3$ is a subgraph of $G$ whose vertices  can be ordered as $v_1,\ldots, v_p$ in such a way that the pairs $\{v_1,v_2\},\ldots,\{v_{p-1},v_p\}, \{v_p,v_1\}$ are all edges in $G$; the cycle is then also denoted as $C=(v_1,\ldots,v_p)$. Any other edge of $G$ of the form $\{v_i,v_j\}$ with $j\ne i-1,i+1$ (modulo $p$) is called a {\em chord} of $C$, and $C$ is {\em chordless} if no such a chord exists. A cycle of length $n$ is denoted by $C_n$ and its edge set is denoted by $E_n$.

The inclusion $\ME(G)\subseteq S(G)$ is strict when $G=C_n=(1,\ldots,n)$ is a cycle with $n \ge 4$; indeed, the vector $x\in\R^{E_n}$ with entries $x_{12}=\ldots=x_{n-1,n}=1$ and $x_{1n}=-1$ satisfies (\ref{eq:clique}), but $x\not\in \ME(C_n)$  as $x$  is not completable to a positive semidefinite matrix.

 When $G$ contains  (chordless) cycles,  
another necessary condition
was introduced by Barrett et al.~\cite{Barrett1993realpositivedefinite}: If $x\in \ME(G)$, then it can be parametrized as $x=\cos (\pi a)$ for some $a\in [0,1]^E$ (since $x\in [-1,1]^E$), and the vector $a={1\over \pi} \arccos(x)$ satisfies 
\begin{align}\label{eq:cycle}
a(F)-a(C\setminus F)\le |F|-1 \text{ for all cycles } C \text{ in } G, F\subseteq C, |F| \text{ odd}.
\end{align}
These are called the \emph{cycle inequalities}. 
Here, for a subset $F\subseteq E$, we set $a(F)=\sum_{e\in F}a_e$.
Let $\MET(G)$ denote the polytope in $\R^E$ consisting of the vectors $a\in \R^E$ satisfying  $0\le a_e\le 1$ for $e\in E$ and the cycle inequalities \eqref{eq:cycle}. This polytope is known as the {\em metric polytope} of $G$. 
It is well-known that the inequalities $0\le a_e\le 1$ define facets of $\MET(G)$ only for the edges $e\in E$ that are not contained in a larger clique of $G$. Moreover,  only the chordless cycles are needed in (\ref{eq:cycle}) since these are precisely the ones that induce facets of $\MET(G)$ (see \cite{Barahona-Mahjoub}, or  \cite[Chap. 27]{Deza-Laurent-1997}).  Indeed, if the edge $e_0\in E$ is a chord of a cycle $C$ in $G$, then one can cover the edge set of $C$ with two smaller cycles (call them $C'$, $C''$) sharing only the edge $e_0$, and one can easily see that the cycle inequality (\ref{eq:cycle}) obtained from $C$ is implied by the cycle inequalities obtained from $C'$ and $C''$ by eliminating the variable $x_{e_0}$. 
This idea of eliminating the variable attached to a chord of a cycle will be used again in Section \ref{sec:resultant} 
when dealing with the resultant of cycle polynomials. 

The result of Barrett et al.~\cite{Barrett1993realpositivedefinite} can be expressed as
\begin{align}\label{eq:inclEMET}
\ME(G)\subseteq \cos(\pi \MET(G)),
\end{align}
where the cosine map is applied entrywise. 
The paper \cite{Barrett1993realpositivedefinite} also shows that equality holds in (\ref{eq:inclEMET}) when $G$ is a cycle. In \cite{Laurent1997realpositivesemidefinite}, Laurent characterizes all graphs for which equality holds.
 
\begin{theorem} \label{thm:cosineMETnoK4minors} \cite{Laurent1997realpositivesemidefinite}
For a graph $G$, equality $\ME(G)=\cos(\pi\MET(G))$ holds if and only if $G$ has no $K_4$-minor (equivalently, $G$ is series-parallel).
\end{theorem}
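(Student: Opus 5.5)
The plan is to prove the two implications separately. Throughout I use the inclusion \eqref{eq:inclEMET} of Barrett et al., together with their result that equality holds when $G$ is a cycle. I also use the standard structure theory of series-parallel graphs: a 2-connected graph with no $K_4$-minor either is a single edge, or arises from a cycle by repeatedly adding paths between adjacent vertices. Equivalently, it is built from smaller such graphs by series and parallel compositions.

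\emph{Sufficiency (no $K_4$-minor implies equality).} I proceed by induction on $|V|+|E|$. Because of \eqref{eq:inclEMET}, only $\cos(\pi\MET(G))\subseteq\ME(G)$ needs proof.

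First I reduce to the 2-connected case. If $G$ is a clique sum over a vertex or over an edge of graphs $G_1,G_2$, then both $\ME(G)$ and $\MET(G)$ decompose as fiber products over the shared coordinates.
\begin{itemize}
\item For $\MET$, this holds because every chordless cycle of $G$ lies in $G_1$ or in $G_2$.
\item For $\ME$, I glue PSD completions along the shared clique, as in the chordal argument of Theorem~\ref{theo:chordal}. Concretely, I take Gram vectors and apply an orthogonal transformation that aligns the vectors of the common clique.
\end{itemize}

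In the 2-connected case, I pick an ear: a path $P$ between two vertices $u,v$ whose interior vertices have degree $2$. If $uv$ is not an edge, I would like to add the edge $uv$ with a suitable value $a_{uv}$, so that the resulting graph splits as a 2-clique sum of $G-P+uv$ and the cycle $P+uv$. This requires showing that for $a\in\MET(G)$ there is some $a_{uv}$ making the extended vector lie in $\MET(G+uv)$. This is a Fourier--Motzkin elimination step: the cycle inequalities through $uv$ in the two parts combine to exactly the cycle inequalities of $G$ through $P$. That is the same chord-elimination observation recalled after \eqref{eq:cycle}. Then I apply induction to $G-P+uv$, which still has no $K_4$-minor, and apply the cycle theorem to $P+uv$. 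Gluing then finishes this direction.

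\emph{Necessity.} Suppose $G$ has a $K_4$-minor. I first show that the property $\ME(G)=\cos(\pi\MET(G))$ is closed under taking minors.
\begin{itemize}
\item \emph{Deletion of an edge $e$.} Both sides project onto $\R^{E\setminus e}$. For $\MET$, I use that the projection of $\MET(G)$ equals $\MET(G\setminus e)$; this follows from the elimination argument above.
\item \emph{Contraction of an edge $e$.} I set $x_e=1$, i.e.\ $a_e=0$. This forces the two endpoints to have equal Gram vectors, so the face $\{x_e=1\}$ of $\ME(G)$ is $\ME(G/e)$. On the polytope side, the face $a_e=0$ of $\MET(G)$ is $\MET(G/e)$, up to the handling of parallel edges created by the contraction.
\end{itemize}
It then suffices to exhibit a point of $\cos(\pi\MET(K_4))\setminus\ME(K_4)$. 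I would take all $a_e=2/3$, i.e.\ $x_e=-1/2$. The triangle inequalities hold, since $2\le 2$ and $2/3\le 0+1$. However, the $4\times4$ matrix with ones on the diagonal and $-1/2$ off the diagonal has eigenvalue $1+3(-1/2)<0$. To pass from $K_4$ back to $G$ via the minor, I extend this point by setting $a_e=0$ on contracted edges, and on deleted edges by choosing values consistent with $\MET$.

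\emph{Main obstacle.} The delicate step is the minor-closedness of $\MET$ under deletion, namely that projection is surjective onto $\MET(G\setminus e)$. I would try the Fourier--Motzkin argument directly. The key check is that every pair of cycle inequalities with opposite signs on $a_e$ sums to a cycle inequality, or to a consequence of inequalities, in $G\setminus e$. This uses that the symmetric difference of two cycles through $e$ is an edge-disjoint union of cycles.
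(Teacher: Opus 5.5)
The paper gives no proof of this theorem; it is quoted from Laurent's 1997 paper, so I judge your argument on its own. The overall plan works, but the sufficiency step is not right as written.

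Eliminating $a_{uv}$ by Fourier--Motzkin pairs every lower bound on $a_{uv}$ with every upper bound. Consider first the cross pairs: one inequality from the cycle $P+uv$ and one from a cycle $D\ni uv$ of $G_1=G-\mathrm{int}(P)+uv$. These do give exactly the cycle inequalities of $G$ through $P$. Pairs inside $P+uv$ follow from $0\le a_e\le 1$.

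However, you also get pairs from two cycles $D,D'$ of $G_1$ through $uv$. Their sum is an inequality supported on $(D\cup D')\setminus\{uv\}\subseteq G-\mathrm{int}(P)$, not a cycle inequality through $P$. Showing that $\MET(G)$ implies it is precisely the deletion lemma $\pi(\MET(G_1))=\MET(G_1\setminus uv)$. So sufficiency already depends on the lemma you postpone to the necessity part. Once you have that lemma (it is \eqref{eq:METG}, due to Barahona--Mahjoub), the elimination step is superfluous: applying it to $G+uv$ yields $a_{uv}$ directly.

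When you prove the lemma, note that the elimination does not literally output cycle inequalities. Split $D\triangle D'$ into edge-disjoint cycles and bound the common edges by $0\le a_f\le 1$. You still need a parity count: either some cycle of the splitting meets the positive set in an odd number of edges, or some edge common to $D$ and $D'$ lies in exactly one of the two odd sets. Either case supplies the missing $1$.

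There are also smaller points to fix. The claim that a 2-connected graph without $K_4$-minor arises from a cycle by adding paths between \emph{adjacent} vertices is false; $K_{2,3}$ is a counterexample. What you actually use is that such a graph, other than an edge or a cycle, has a vertex of degree $2$. This follows from Dirac's theorem that minimum degree $3$ forces a $K_4$-minor, and your auxiliary edge $uv$ already handles nonadjacent ends. In the $K_4$ check, the size-one triangle inequality reads $2/3-4/3\le 0$, not $2/3\le 0+1$. In the contraction step, identifying the face $a_e=0$ with $\MET(G/e)$ also requires treating cycles of $G$ through both ends of $e$ that avoid $e$. These become two closed walks in $G/e$, exactly one of which meets $F$ oddly, and it is not just a matter of parallel edges.

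For comparison, the tools the paper records give a shorter proof of sufficiency. Graphs with no $K_4$-minor are exactly the partial 2-trees, so $G\subseteq H$ for a chordal graph $H$ whose maximal cliques are triangles. Extend $a\in\MET(G)$ to $a'\in\MET(H)$ by \eqref{eq:METG}. On each triangle $K$ of $H$, the $3\times 3$ case, which is a direct trigonometric check, gives $\cos(\pi a')[K]\succeq 0$. Hence $\cos(\pi a')\in S(H)=\ME(H)$ by Theorem~\ref{theo:chordal}, and projecting gives $\cos(\pi a)\in\ME(G)$.

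This bypasses the ear induction and the 2-sum gluing, and it recovers the cycle theorem of Barrett et al.\ rather than using it. Your route is longer, but it is self-contained on the polyhedral side, because you prove the projection lemma yourself instead of citing it.
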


By the above discussion, 
the facets of $\MET(K_n)$ are defined by the cycle inequalities (\ref{eq:cycle}) for all triplets $\{i,j,k\}\subseteq V(K_n)$. These are also known as the {\em triangle inequalities}. Moreover, the metric polytope of a graph $G=(V,E)$ with $n$ vertices is obtained as a projection of $\MET(K_n)$: 
 \begin{align}\label{eq:METG}
 \MET(G)=\pi_E(\MET(K_n)),
 \end{align}
see \cite{Barahona-Mahjoub}, or  \cite[Chap. 27]{Deza-Laurent-1997}. The inclusion $\pi_E(\MET(K_n)) \subseteq \MET(G)$ is clear since each cycle inequality is implied by triangle inequalities.
The first row in Figure~\ref{fig:E4} shows three different sections of the metric polytope $\MET(C_4)$, obtained by setting a variable to $\tfrac{1}{2}, \tfrac{1}{3},0$, respectively.

As $C_3=K_3$, we have $\ME(K_3)=\cos (\pi\MET(K_3))$. Combining this with (\ref{eq:METG}), it follows that  $\cos(\pi\MET(G))$ is a spectrahedral shadow  for any graph $G$.

\begin{lemma}\label{lem:cosMET}
For any graph $G$, the set 
$\cos(\pi\MET(G))$ is the projection of a spectrahedron. In particular, $\cos(\pi\MET(G))$ is convex and semialgebraic.
\end{lemma}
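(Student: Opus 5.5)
We will realize $\cos(\pi\MET(G))$ as the image of $\ME_n$ under a linear map. The plan combines two facts recalled above: the equality $\ME(K_3)=\cos(\pi\MET(K_3))$ (the case $G=C_3$ of the result of Barrett et al.), and the projection formula \eqref{eq:METG}, $\MET(G)=\pi_E(\MET(K_n))$.

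First, since the cosine acts coordinatewise, it commutes with coordinate projections. Hence \eqref{eq:METG} gives
\[
\cos(\pi\MET(G))=\pi_E\bigl(\cos(\pi\MET(K_n))\bigr).
\]
So it suffices to show that $\cos(\pi\MET(K_n))$ is a spectrahedron, or at least a spectrahedral shadow. A linear projection of a spectrahedral shadow is again a spectrahedral shadow.

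Second, we describe $\MET(K_n)$ by its defining inequalities. These are $0\le a_e\le 1$ together with the triangle inequalities on each triple $\{i,j,k\}$. Any cycle inequality of $K_n$ is implied by triangle inequalities, so $\MET(K_n)$ is exactly the set of $a\in[0,1]^{E(K_n)}$ whose restriction to every triangle $\{i,j,k\}$ lies in $\MET(K_3)$. The map $a\mapsto\cos(\pi a)$ is a bijection from $[0,1]^{E(K_n)}$ onto $[-1,1]^{E(K_n)}$, and it also acts coordinatewise. Therefore
\[
\cos(\pi\MET(K_n))=\{x\in[-1,1]^{E(K_n)}:\ x|_{\{ij,ik,jk\}}\in\cos(\pi\MET(K_3))\ \text{for all triples}\}.
\]
By $\ME(K_3)=\cos(\pi\MET(K_3))$, the condition on each triple says that the $3\times3$ matrix $x[\{i,j,k\}]$ is PSD. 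The box constraints are already implied by these $3\times 3$ conditions when $n\ge3$. They can also be added explicitly as the $2\times2$ conditions $\bigl(\begin{smallmatrix}1&x_e\\x_e&1\end{smallmatrix}\bigr)\succeq0$.

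It follows that $\cos(\pi\MET(K_n))$ is the spectrahedron given by the block-diagonal linear matrix inequality that collects all the matrices $x[\{i,j,k\}]$. Its projection under $\pi_E$ is $\cos(\pi\MET(G))$, which is therefore a spectrahedral shadow. Projections of convex sets are convex, and by Tarski--Seidenberg projections of semialgebraic sets are semialgebraic, which gives the second claim. The cases $n\le 2$ are trivial: there the set is a box or a point.

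The only delicate step is the commutation of the cosine with the descriptions above, namely that it maps the intersection of the conditions "restriction to each triangle lies in $\MET(K_3)$" onto the intersection of their images. This holds because $\cos(\pi\,\cdot)$ is a coordinatewise bijection $[0,1]\to[-1,1]$, so it preserves intersections of sets that are defined by conditions on coordinate restrictions.
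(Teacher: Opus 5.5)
Your proof is correct and is essentially the paper's argument. You identify $\cos(\pi\MET(K_n))$ with the spectrahedron $\{x: x[\{i,j,k\}]\succeq 0 \text{ for all triples}\}$, using $\ME(K_3)=\cos(\pi\MET(K_3))$ and the coordinatewise bijectivity of $\cos(\pi\,\cdot)$ on $[0,1]$, and then project with \eqref{eq:METG}; you even handle the degenerate case $n\le 2$, where the triple description is vacuous.

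The only slip is your opening sentence, which announces a realization as a linear image of $\ME_n$. That is not what you do, and indeed $\pi_E(\ME_n)=\ME(G)$ is in general strictly smaller than $\cos(\pi\MET(G))$. The set you actually project is the larger spectrahedron cut out by all $3\times 3$ principal submatrices.
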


\begin{proof}
Since   $\MET(K_3)$ is defined by the triangle inequalities that are based on three point-sets and the equality $\cos (\pi\MET(K_3)) =\ME(K_3)$ holds by Theorem \ref{thm:cosineMETnoK4minors}, one obtains that 
\[\cos(\pi\MET(K_n))=\{x\in \R^{E(K_n)}: x[K]\succeq 0 \text{ for all } K\subseteq  V, |K|=3\}.\]
 Hence, the set $ \cos(\pi\MET(K_n))$ is a spectrahedron, so it is convex and semialgebraic. Since the projection of a convex set is convex, and the projection of a semialgebraic set is semialgebraic by Tarski-Seidenberg,   the projection  $\pi_E(\cos(\pi\MET(K_n)))$ is convex semialgebraic too. Using \eqref{eq:METG}, we have $\pi_E(\cos(\pi\MET(K_n)))= \cos(\pi_E(\pi\MET(K_n)))= \cos(\pi\MET(G))$.\end{proof}

Theorem \ref{theo:chordal} and Equation \eqref{eq:inclEMET} provide  two convex semialgebraic sets  containing $\ME(G)$. We combine them to obtain a sharper outer approximation  $\tME(G)= S(G)\cap \cos (\pi\MET(G))$,  
\begin{align}\label{eq:tMEG}
\begin{split}
\tME(G)  
=
\Big\{x\in \R^E: x[K]\succeq 0 \text{ for all } K\in\MK_G  \text{ and } {1\over \pi}\arccos(x)\in\MET(G)\Big\},
\end{split}
\end{align}
  of the elliptope $\ME(G)$: we have the inclusion $\ME(G)\subseteq \tME(G)$. Notice that $\tME(G)$  is full-dimensional since the zero vector lies in its interior. It is convex and semialgebraic, as these properties are preserved when taking intersections. 
Following Barrett et al.  \cite{BJL-cycle-completability}, a graph $G$ is said to be {\em cycle completable}  when membership in the elliptope $\ME(G)$ is characterized by both conditions (\ref{eq:clique}) and (\ref{eq:cycle}) combined, i.e., when the equality $\ME(G)=\tME(G)$ holds.
The following characterization of cycle completable graphs follows from results in \cite{BJL-cycle-completability,Johnson-McKee}.

\begin{theorem}\cite{BJL-cycle-completability,Johnson-McKee}\label{theo:cycle-completable} 
A graph $G$ is cycle completable, i.e., equality $\ME(G)=\tME(G)$ holds with $\tME(G)$ as in (\ref{eq:tMEG}), 
if and only if $G$ satisfies any of the following equivalent conditions:
\begin{itemize}
\item[(i)] $G$ is a subgraph of a chordal graph, all of whose cliques of size $4$ are contained in $G$.
\item[(ii)] $G$ can be obtained as a clique sum of chordal graphs and graphs with no $K_4$-minors.
\item[(iii)] No induced subgraph of $G$ is a wheel $W_k$ ($k\ge 5$) or an iterated splitting of $W_k$ ($k\ge 4$).
\end{itemize}
\end{theorem}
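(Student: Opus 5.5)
The statement is a compilation of results from \cite{BJL-cycle-completability,Johnson-McKee}, so the plan is to assemble it from them rather than prove it from scratch. The backbone is the equivalence between cycle completability and condition (i), which is the main theorem of Barrett, Johnson and Loewy \cite{BJL-cycle-completability}. The equivalence of (i), (ii) and (iii) is purely graph-theoretic and is due to Johnson and McKee \cite{Johnson-McKee}. I would state these two citations and then explain, for the reader, why each implication is plausible, using the tools already in the paper.

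For sufficiency, the easiest route is (ii). First, cycle completability is preserved under clique sums: if $G=G_1\cup G_2$ with $G_1\cap G_2$ a clique $K$, take $x\in\tME(G)$. Its restrictions lie in $\tME(G_1)$ and $\tME(G_2)$, because cliques and cycles of $G_i$ are cliques and cycles of $G$. By hypothesis they admit PSD completions $X_1$ and $X_2$, which agree on $K$. Gluing them is a PSD completion problem on a chordal pattern (two blocks overlapping in $K$), so Theorem~\ref{theo:chordal} yields a completion of $x$. Second, the base cases hold. Chordal graphs are cycle completable by Theorem~\ref{theo:chordal}, since $\ME(G)=S(G)\supseteq\tME(G)$. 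For $K_4$-minor-free graphs, Theorem~\ref{thm:cosineMETnoK4minors} gives $\ME(G)=\cos(\pi\MET(G))\supseteq \tME(G)$. Together with $\ME(G)\subseteq\tME(G)$, this proves that every graph satisfying (ii) is cycle completable.

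For necessity, the natural approach is via the forbidden induced subgraphs in (iii). One shows that cycle completability is inherited by induced subgraphs: a point in $\tME(H)$ for an induced subgraph $H$ extends to a point of $\tME(G)$, for instance by setting the remaining edges to suitable values such as $0$ (cosine $\pi/2$) and checking the clique and cycle conditions. Then for each wheel $W_k$ with $k\ge5$, and each iterated splitting of $W_k$ with $k\ge4$, one exhibits a point of $\tME\setminus\ME$. This case analysis is the main obstacle. I would not reproduce it, but defer to \cite{BJL-cycle-completability}, where the explicit counterexample points and their non-completability (via a rank or determinant argument on the hub's neighbourhood) are given.

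Finally, the equivalences (i)$\Leftrightarrow$(ii)$\Leftrightarrow$(iii) are taken from \cite{Johnson-McKee}, using the structure theory of clique separators. A graph with no clique separator that avoids the configurations in (iii) is either chordal-like (complete) or has no $K_4$-minor, and decomposing along clique separators gives (ii). Condition (i) corresponds to triangulating each $K_4$-minor-free atom without creating new $4$-cliques.
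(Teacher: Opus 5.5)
Your proposal is correct and matches the paper, which also states this theorem purely as a citation of \cite{BJL-cycle-completability,Johnson-McKee} and gives no proof of its own. The arguments you add are sound: clique sums (taken without deleting edges of the shared clique) preserve cycle completability via the chordal two-block completion, and heredity to induced subgraphs follows by padding with zeros, since a cycle leaving the induced subgraph uses at least two edges with $a_e=\tfrac12$, which forces $a(F)-a(C\setminus F)\le |F|-1$.
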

We refer to \cite{BJL-cycle-completability,Johnson-McKee} for details on clique sums and splittings.
We will consider elliptopes of wheels later in Section \ref{sec:applications}.

\subsection{Preliminaries on Lissajous varieties}  \label{sec:lissajous} 

In this section we discuss Lissajous varieties, a class of   algebraic varieties that was recently introduced and investigated by Mascarin and Telen  
 \cite{mascarin2025lissajousvarieties}.
 
We refer to the textbooks \cite{CoxLittleOShea2005UsingAG,cox2025ideals} for basic facts about algebraic varieties and polynomial  ideals. In what follows, for a set $\MF\subseteq \C[x_1,\ldots,x_n]$ of polynomials, $V(\MF)=\{x\in\C^n: f(x)=0 \text{ for all } f\in\MF\}$ denotes the associated complex affine variety.  The \emph{vanishing ideal} of a set $S \subseteq \mathbb{C}^n$ is denoted by $I(S) \, = \, \{ f \in \mathbb{C}[x_1,\ldots, x_n] \, : \, f(x) = 0 \text{ for all } x \in S \}$. 

We use the notation $u\cdot v$ for the euclidean inner product of two vectors $u,v\in\C^n$, and $u\circ v=(u_iv_i)_{i=1}^n$ for their Hadamard product. We let $\ao$ and $\azero$ denote the all-ones vector and the zero vector (of appropriate size), and $I_n$ is the $n\times n$ identity matrix.

\subsubsection*{Parametric and determinantal representations of Lissajous varieties}

Let $A \in \mathbb{Z}^{d \times n}$ be an integer-valued  matrix and let ${\rm Row}(A) \subseteq \mathbb{C}^n$ denote the $\mathbb{C}$-linear span of its rows. 
Given a  vector $b \in \mathbb{C}^n$, let ${\cal L}_{A,b}$ be the image of the affine space 
\begin{align*}
L_{A,b} = {\rm Row}(A) -  \frac{b\pi}{2} =\Big \{ x -  \frac{b \pi}{2}\, : \, x \in {\rm Row}(A)\Big\} \subseteq \mathbb{C}^n
\end{align*}
 under the componentwise cosine map. That is, define
\begin{align*}
{\cal L}_{A,b} \, = \, {\rm cos}(L_{A,b}) =\{(\cos(z_1),\dotsc,\cos(z_n)): z\in L_{A,b}\}\subseteq \C^n,
\end{align*}
called the {\em Lissajous variety} associated to the pair $(A,b)$. Then, $\ML_{A,b}$  is an irreducible affine algebraic variety of dimension ${\rm rank}(A)$   \cite[Lemma 2.1]{mascarin2025lissajousvarieties}.
The case where $b= {\bf 0} \in \mathbb{C}^n$ first appeared in \cite[Section 5]{BelAfiaMeroniTelen2025Chebyshev}. We shall use the short-hand notation ${\cal L}_A = {\cal L}_{A,{\bf 0}}=\cos(\Row(A))$. 
 
Let $a_1,\ldots,a_n\in \Z^d$ denote the columns of matrix $A$. Choosing coordinates $t \in \mathbb{C}^d$ on ${\rm Row}(A)$,  
one can obtain   the Lissajous variety  
${\cal L}_{A,b}$ as   the image  of $\C^d$ under the map
 \begin{equation}\label{eq:parametrization} 
\phi_{A,b} \, : \, \mathbb{C}^d \longrightarrow \mathbb{C}^n, \quad t = (t_1, \ldots, t_d) \longmapsto ( {\rm cos}(a_1 \cdot t - b_1 \tfrac{\pi}{2}), \ldots, {\rm cos}(a_n \cdot t - b_n \tfrac{\pi}{2} )),
\end{equation}
which gives a {\em trigonometric parametrization} of $\ML_{A,b}=\phi_{A,b}(\C^d)$.

The integer matrix $A$ defines an affine toric variety $Y_A \subseteq \mathbb{C}^n$,
 obtained as the closure of the image of the Laurent monomial map $v \in (\mathbb{C}^*)^d\mapsto (v^{a_1}, \ldots, v^{a_n})$,   i.e., 
 \begin{align*}
 Y_A \, = \, \overline{\{v^{a_1},\ldots,v^{a_n}): v\in (\C^*)^d\}} \,  \subseteq \,  \C^n.
 \end{align*}
A \emph{scaled} toric variety $Y_{A,\beta}$ is obtained from $A \in \mathbb{Z}^{d \times n}$ and  $\beta \in (\mathbb{C}^*)^n$ as follows:
\[ Y_{A,\beta} \, = \, \{ (\beta_1 \, y_1, \ldots, \beta_n \, y_n) \, : \,  y \in Y_A \} \, \subseteq \, \mathbb{C}^n. \]
The vanishing ideal $I(Y_{A,\beta})$ is generated by binomials: 
\begin{align}\label{eq:IYA}
 I(Y_{A,\beta}) \, = \, \langle \beta^wy^{u} - \beta^uy^{w} \, : \, u, w \in \mathbb{N}^n \, \, \text{and} \, \,  A(u-w) = 0 \rangle \, \subseteq \, \C[y]=\mathbb{C}[y_1, \ldots, y_n].
 \end{align}
For instance, the vanishing ideal $I(Y_A)$ is obtained by setting  $\beta=\one$. The following characterization, shown in  \cite[Theorem 2.2]{mascarin2025lissajousvarieties}, immediately yields a {\em rational parametrization} of ${\cal L}_{A,b}$.

\begin{theorem}\label{thm:dimension}
Let  $A\in\Z^{d\times n}$, $b\in\C^n$, and set $\beta = (e^{-i b_1 \tfrac{\pi}{2}}, \ldots, e^{-i b_n \tfrac{\pi}{2}}) \in (\mathbb{C}^*)^n$.
Then, the    Lissajous variety ${\cal L}_{A,b} \subseteq \mathbb{C}^n$ is equal to the image of the variety
    \[ {\cal Y}_{A,b} \, = \, \{(x,y) \in \mathbb{C}^n \times (\mathbb{C}^*)^n \, : \, y \in Y_{A,\beta} \text{ and } x_j \, = \, \tfrac{1}{2}(y_j + y_j^{-1}) \text{ for } j = 1, \ldots,n \}, \]
    under the coordinate projection $\pi_{A,b}\colon (x,y)\in {\cal Y}_{A,b}  \mapsto x\in \mathbb{C}^n$. 
      That is, $$\ML_{A,b}=\pi_{A,b}({\cal Y}_{A,b}).$$
\end{theorem}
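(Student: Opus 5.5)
We prove the two inclusions $\ML_{A,b}\subseteq \pi_{A,b}(\mathcal Y_{A,b})$ and $\pi_{A,b}(\mathcal Y_{A,b})\subseteq \ML_{A,b}$ directly, using the exponential substitution $y_j = e^{i(a_j\cdot t - b_j\pi/2)}$. The guiding identity is $\cos z = \tfrac12(e^{iz}+e^{-iz})$: the cosine factors through $z\mapsto e^{iz}\in\C^*$ followed by the map $y\mapsto \tfrac12(y+y^{-1})$.

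For the inclusion $\ML_{A,b}\subseteq\pi_{A,b}(\mathcal Y_{A,b})$, take a point $\phi_{A,b}(t)$ with $t\in\C^d$. Set $v=(e^{it_1},\dots,e^{it_d})\in(\C^*)^d$. Then
\[
y_j \, = \, e^{-ib_j\pi/2}\, e^{i a_j\cdot t} \, = \, \beta_j v^{a_j}.
\]
So $y$ lies in the torus orbit $\beta\circ\{(v^{a_1},\dots,v^{a_n})\}\subseteq Y_{A,\beta}$, and clearly $y\in(\C^*)^n$. By the identity above, $x_j=\cos(a_j\cdot t-b_j\pi/2)=\tfrac12(y_j+y_j^{-1})$. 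Hence $(x,y)\in\mathcal Y_{A,b}$ and $x=\pi_{A,b}(x,y)$.

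For the reverse inclusion, take $(x,y)\in\mathcal Y_{A,b}$. The key step is to show that
\[
Y_{A,\beta}\cap(\C^*)^n \, = \, \beta\circ\{(v^{a_1},\dots,v^{a_n}) : v\in(\C^*)^d\},
\]
that is, a point of the toric variety with all coordinates nonzero lies in the dense torus orbit. I would argue this through the binomial description \eqref{eq:IYA}. Write $y'=\beta^{-1}\circ y\in Y_A\cap(\C^*)^n$. Every Laurent binomial $y'^{m}-1$ with $m\in\ker_\Z A$ vanishes at $y'$: write $m=u-w$ with $u,w\in\N^n$, note that $y'^u-y'^w\in I(Y_A)$, and divide by $y'^w\neq0$. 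So the character $\chi:\Z^n\to\C^*$, $m\mapsto y'^m$, is trivial on $\ker_\Z A$. It therefore factors through $\Z^n/\ker_\Z A\cong A^{T}\Z^d$ viewed as a subgroup of $\Z^n$, which is isomorphic to the lattice $L'=\{Am : m\in\Z^n\}\subseteq\Z^d$. Since $\C^*$ is divisible, hence an injective $\Z$-module, the character extends from $L'$ to all of $\Z^d$, giving $v\in(\C^*)^d$ with $v^{Am}=y'^m$ for all $m$. Taking $m=e_j$ gives $y'_j=v^{a_j}$.

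To finish, choose $t\in\C^d$ with $e^{it_k}=v_k$, which is possible because $\exp$ surjects onto $\C^*$. Then $y_j=e^{i(a_j\cdot t-b_j\pi/2)}$, and therefore $x_j=\tfrac12(y_j+y_j^{-1})=\cos(a_j\cdot t-b_j\pi/2)$. This shows $x=\phi_{A,b}(t)\in\ML_{A,b}$.

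The main obstacle is this middle step, since $Y_A$ is defined as a closure. One must make sure the closure adds no new points inside the open torus, and that the lattice $\Z^n/\ker A$ may have torsion-free but nonsaturated image. Divisibility of $\C^*$ handles the extension of the character regardless of saturation. The remaining steps are routine.
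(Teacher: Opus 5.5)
Your proof is correct and complete. The paper gives no proof of this statement; it quotes it from \cite[Theorem~2.2]{mascarin2025lissajousvarieties}, so there is no in-paper argument to compare with. Your route is the natural one: factor $\cos$ as $z\mapsto e^{iz}$ followed by $y\mapsto \tfrac12(y+y^{-1})$, then identify $Y_{A,\beta}\cap(\C^*)^n$ with the scaled torus orbit $\{\beta\circ(v^{a_1},\dots,v^{a_n}) : v\in(\C^*)^d\}$. Your character-extension argument proves this toric fact correctly. It uses only that the lattice binomials vanish on the closure and that $\C^*$ is divisible, so it works even when $A\Z^n\subseteq\Z^d$ is non-saturated or has rank less than $d$.

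One small cleanup: the aside identifying $\Z^n/\ker_\Z A$ with $A^T\Z^d\subseteq\Z^n$ is only an abstract isomorphism of free groups and plays no role. The map you actually use is $[m]\mapsto Am$ onto $A\Z^n$, which gives $v^{Am}=y'^m$ and hence $v^{a_j}=y'_j$.
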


This characterization can be used to derive a {\em determinantal representation} of the Lissajous variety $\ML_{A,b}$, as shown in \cite[Theorem 3.1]{mascarin2025lissajousvarieties}. For simplicity, and since this is the relevant case for our paper, we present this determinantal representation in the case when $A$ has rank $n-1$ and $b=\mathbf{0}$, i.e., $\beta=\one$.
The kernel of $A$ is spanned by a  vector $u-w$ with $u,w\in\N^d$ and
$I(Y_A)=\langle g_A\rangle$,   where $g_A=y^u-y^w\in \C[y]$. Given $x\in\C^n$, consider the ideal 
$$J_x=\langle 2x_i-y_i-y_i^{-1}: i\in [n]\rangle\subseteq \C[y,y^{-1}]=\C[y_i,y_i^{-1}: i\in [n]].$$
 As the equation $2x_i-y_i-y_i^{-1}=0$ is equivalent to $y_i^2-2x_iy_i+1=0$,  the variety $V(J_x)$ is a finite set, consisting of  $2^n$ points, 
\begin{align}\label{eq:VJx}
V(J_x)=\Big\{\big(x_i+\epsilon_i \sqrt{x_i^2-1}\big)_{i=1}^n: \epsilon\in \{\pm 1\}^n\Big\}.
\end{align}
Consider the quotient algebra 
$R_x= \C[y,y^{-1}]/J_x$. Hence, $R_x\simeq \otimes_{i=1}^n \C[y_i]/\langle y_i^2-2x_iy_i+1\rangle$ has finite dimension $2^n$, since $\C[y_i]/\langle y_i^2-2x_iy_i+1\rangle$ is spanned by the cosets of  $1,y_i$. 
 For any polynomial $g\in \C[y]$, consider the associated ``multiplication by $g$'' linear map $M_g: R_x \to R_x$ that maps the coset of $f$ to the coset of $fg$.  It follows from the well-known eigenvalue method for polynomial root finding (see, e.g., \cite[Chapter 4, Proposition 2.7]{CoxLittleOShea2005UsingAG})  that the eigenvalues of $M_g$ are the evaluations $g(\xi)$ of $g$ at the roots $\xi\in V(J_x)$. In particular, we have
\begin{align}\label{eq:detg}
\det (M_g)= \prod_{\xi\in V(J_x)} g(\xi).
\end{align}
The matrix $M_g$ is obtained as  $M_g=g(M_{y_1},\ldots,M_{y_n})$, a polynomial in the pairwise commuting matrices $M_{y_i}$. In \cite[Section 3]{mascarin2025lissajousvarieties}, it is explained in detail that the matrix $M_{y_i}$ is the following Kronecker product of $n$ matrices of size $2 \times 2$:
\[  
I_2 \otimes \cdots \otimes I_2\otimes \left ( \begin{smallmatrix} 0 & -1\cr 1 & 2x_i\end{smallmatrix} \right) 
 \otimes I_2\otimes \cdots \otimes I_2,\]  
where the only non-identity matrix appearing in the product is placed at the $i$-th position.  
Hence, $M_g$ is a $2^n\times 2^n$ matrix with polynomial entries in $\Z[x]$.

By Theorem \ref{thm:dimension}, $x\in\ML_{A,b}$ if and only if there exists $y\in V(J_x)$ such that $y\in Y_{A}=V(g_A)$; equivalently, $x\in\ML_{A,b}$ if and only if $\prod_{y \in V(J_x)}g_A(y)=0$. Therefore, using (\ref{eq:detg}) for the polynomial $g = g_A$, we obtain the following {\em determinantal representation} of $\ML_{A,b}$,
\begin{align}\label{eq:detrep}
\ML_{A,b}=\{x\in\C^n: \det(M_{g_A})=0\}.
\end{align}

\subsubsection*{Lissajous varieties of graphs}
An important class of Lissajous varieties arises from graphs in the following way.
 Let $G=(V,E)$ be a simple, connected graph. 
 We define an {incidence matrix} $A_G\in \mathbb{Z}^{V\times E}$ of $G$ as follows. Select an ordering and orientation of the edges in  $E$, and an ordering of the vertices in $V$. The rows of $A_G$ are indexed by $V$ and its columns are indexed by $E$.
If $(j,k)$ is the $l$-th edge of $G$ (oriented from $j$ to $k$),
  then the $l$-th column of $A_G$ is the vector $a_l = e_j-e_k$, where $e_k$ is the $k$-th standard basis vector of $\mathbb{R}^V$.
That is, $(A_G)_{i,(j,k)}$ is equal to 1 (resp., $-1$, 0) if $i=j$ (resp., $i=k$, $i\in V\setminus\{j,k\}$).
 Since $G$ is connected, the rank of $A_G$ is $|V|-1$. 
The  Lissajous variety associated to $G$  
is
\begin{align*}
   \ML_G  
  =\ML_{A_G,{\bf 0}}=\cos (\Row(A_G)).
\end{align*}
Reordering the edges simply amounts to permuting the coordinates in the ambient space~$\mathbb{C}^n$ and  $\mathcal L_{G}$ does not depend on the selected orientation of the edges. Indeed, reversing the edge $(j,k)$ amounts to replacing the associated column $a_l=e_j-e_k$ of $A_G$ by its negation  $-a_l=e_k-e_j$; as $\cos(-a_l\cdot t)=\cos(a_l\cdot t)$, the variety $\mathcal L_{G}$ remains unchanged.

The Lissajous variety of the cycle graph $G=C_n=(1,2,\ldots,n)$ plays a central role in this paper. For a cycle of length $n$, we simplify the notation further to
$$\mathcal L_n=\ML_{C_n}.$$ 
Let us order and orient the edges of $C_n$ as
 $(1,n), (1,2),\ldots, (n-1,n)$. 
 Since $A_{C_n}$ has rank $n-1$ and the Lissajous variety $\mathcal L_n$ depends only on the row span of $A_{C_n}$, 
one can delete a row of $A_{C_n}$ and apply row transformations so that its row space   is equal to that of  \begin{equation} \label{eq:cycleAn} A_{n} \, = \, \begin{pmatrix}
    1 & 1 & 0 & \cdots & 0 \\ 
    1 & 0 & 1 & \cdots & 0 \\ 
    \vdots & \vdots & \vdots & \ddots & \vdots \\ 
    1 & 0 & 0 & \cdots & 1
\end{pmatrix} \quad \in \, \, \mathbb{Z}^{(n-1) \times n}.\end{equation}
 The associated Lissajous variety $\mathcal L_n \subseteq \mathbb{C}^n$ is 
 \begin{equation}
\label{eq:parametrize}
\mathcal L_n \,  = \, \cos(\Row(A_n)) \, = \, \{( \cos(t_2+\dotsc+t_n), \cos(t_2),\dotsc, \cos(t_n )): t\in \C^{n-1}\}.
\end{equation}

A Lissajous variety depends only on the linear space ${\rm Row}(A) \subseteq \mathbb{C}^n$, and not on the particular choice of the matrix $A \in \mathbb{Z}^{d \times n}$. 
This correspondence is still not one-to-one, as multiple affine  spaces may define the same Lissajous variety. 
The following result provides such an observation, which we will use for the case of ${\cal L}_n$.

\begin{lemma}\label{lem:cosH}
Consider a matrix $A\in\Z^{d\times n}$ of rank $d$. 
Let $\{v_j\}_{j=1}^{n-d}\subseteq\Z^n$ be a basis of the lattice $\ker(A)\cap \Z^n = \{z \in \mathbb{Z}^n \, : \, A \, z = 0 \}$.
For the Lissajous variety ${\cal L}_A=\cos (\Row(A))$, we have $\ML_A=\cos(H_{s,k})$ for any affine space 
$H_{s,k}\subseteq \C^n$ of the form 
\[
H_{s,k}=\{t\in \C^n: (s\circ v_j)\cdot t= 2\pi k_j \text{ for all }j=1,\dotsc,n-d\},
\]
where $s\in \{\pm 1\}^n$ and $k\in \Z^{n-d}$.
Moreover, we have the equality 
\[
\cos^{-1}(\ML_A) = \bigcup_{s\in \{\pm 1\}^n,k\in \Z^{n-d}} H_{s,k}.
\]
\end{lemma}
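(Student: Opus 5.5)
The plan is to prove the second statement, the description of $\cos^{-1}(\ML_A)$; the first statement then follows immediately. Two basic facts drive everything. First, over $\C$ we have $\cos z=\cos w$ if and only if $z=\epsilon w+2\pi m$ for some $\epsilon\in\{\pm1\}$ and $m\in\Z$. Second, because $A$ has rank $d$, $\Row(A)$ is the orthogonal complement of $\ker(A)$ with respect to the bilinear form $u\cdot t$. Since the $v_j$ span $\ker(A)$ over $\C$, this gives $\Row(A)=\{t: v_j\cdot t=0 \text{ for all } j\}=H_{\one,0}$.

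\emph{Inclusion $\cos^{-1}(\ML_A)\subseteq\bigcup H_{s,k}$.} Let $\cos(t)\in\ML_A$. Then $\cos(t)=\cos(r)$ for some $r\in\Row(A)$. Applying the first fact coordinatewise gives $s\in\{\pm1\}^n$ and $m\in\Z^n$ with $t=s\circ r+2\pi m$, so $s\circ t=r+2\pi (s\circ m)$. Then
\[(s\circ v_j)\cdot t=v_j\cdot(s\circ t)=v_j\cdot r+2\pi\, v_j\cdot(s\circ m)=2\pi k_j,\]
where $k_j=v_j\cdot(s\circ m)\in\Z$ because $v_j$ is integral.

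\emph{Inclusion $H_{s,k}\subseteq\cos^{-1}(\ML_A)$, for every $s,k$.} Let $t\in H_{s,k}$ and set $t'=s\circ t$. Then $\cos(t')=\cos(t)$ and $v_j\cdot t'=2\pi k_j$ for all $j$. It therefore suffices to find $m\in\Z^n$ with $v_j\cdot m=k_j$ for all $j$. Given such an $m$, the vector $r=t'-2\pi m$ satisfies $v_j\cdot r=0$ for all $j$, so $r\in\Row(A)$, and $\cos(t)=\cos(t')=\cos(r)\in\ML_A$.

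The main obstacle is this integer solvability, and here the hypothesis that the $v_j$ form a \emph{lattice basis} of $\ker(A)\cap\Z^n$ is essential. The map $\Z^n\to\Z^{n-d}$, $m\mapsto (v_j\cdot m)_j$, must be surjective. To show this, let $V$ be the integer matrix with columns $v_j$. The quotient $\Z^n/(\ker A\cap\Z^n)$ embeds into $\Z^d$ via $A$, so it is torsion-free. Hence the saturated sublattice $\ker A\cap\Z^n$ is a direct summand of $\Z^n$, and its basis $v_1,\dots,v_{n-d}$ extends to a basis of $\Z^n$. Equivalently, the Smith normal form of $V$ has all invariant factors equal to $1$. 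In either form, this gives integer $m$ with $V^{\top}m=k$.

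Combining the two inclusions gives the union formula. For the first statement, $\cos(H_{s,k})\subseteq\ML_A$ follows from the second inclusion. For the reverse, take $x=\cos(r)$ with $r\in\Row(A)=H_{\one,0}$. Choose integer $m$ with $v_j\cdot m=k_j$ as above and put $t=s\circ(r+2\pi m)$. Then $(s\circ v_j)\cdot t=v_j\cdot r+2\pi v_j\cdot m=2\pi k_j$, so $t\in H_{s,k}$, and $\cos(t)=\cos(r)=x$. Hence $\cos(H_{s,k})=\ML_A$.
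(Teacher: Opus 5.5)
Your proof is correct and follows the paper's own argument. For $\cos(H_{s,k})\subseteq\ML_A$ you use the same coordinatewise change of variables $t\mapsto s\circ t-2\pi m$, and its inverse for the reverse inclusion. For $\cos^{-1}(\ML_A)\subseteq\bigcup_{s,k}H_{s,k}$ you use the same fact, $\cos z=\cos w$ if and only if $z=\pm w+2\pi m$, with the same choice $k_j=v_j\cdot(s\circ m)$.

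The one place you go beyond the paper is the existence of $m\in\Z^n$ with $v_j\cdot m=k_j$. The paper simply asserts this. Your argument that $\ker(A)\cap\Z^n$ is saturated, hence a direct summand of $\Z^n$, is exactly what is needed, and it is where the lattice-basis hypothesis on the $v_j$ really enters.
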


\begin{proof}
Let $s\in\{\pm 1\}^n$ and $k\in \Z^{n-d}$. 
Assume that $x\in \cos(H_{s,k})$, i.e.,  $x=\cos (t)$  for some $t\in\C^n$ with $(s\circ v_j)\cdot t=2k_j\pi$ for all $j=1,\dotsc,n-d$. Choose $z\in\Z^n$ such that $k_j= v_j\cdot z$ for $j=1,\ldots,n-d$.
 We perform the linear change of variables 
\[
t_i  \mapsto t'_i= s_it_i -2z_i\pi,\text{ for } i=1,\dotsc, n.
\]
Then, we obtain $v_j\cdot t'= (v_j\circ s)\cdot t- 2\pi v_j\cdot z  = 0$ for $1\le j\le n-d$, showing $t'\in \Row (A)$. Then, $\cos(t')=\cos(t)=x$, and thus $x\in {\cal L}_A$.
The inclusion $\ML_A\subseteq \cos(H_{s,k})$ is proved analogously, by inverting the change of variables.

It remains to prove that  $\cos^{-1}(\ML_A) \subseteq \bigcup_{s,k} H_{s,k}$.
Let $t\in \C^n$ be such that $\cos(t)\in\ML_A$. 
By definition, we have $\cos(t)=\cos(t')$, for some $t' \in \Row(A)$. This means that there exist some $s\in \{\pm 1\}^n$ and $k'\in \Z^n$ such that $t=s\circ t'+2k'\pi$. 
In particular, $(s\circ v_j)\cdot t=2\pi (s\circ v_j)\cdot k'$ for $j=1,\dotsc,n-d$. Set $k=(v_1\cdot (s\circ k'),\dotsc,v_{n-d}\cdot (s\circ k'))$. Then
 $(s\circ v_j)\cdot t=2\pi k_j$ for $j=1,\dotsc,n-d$, which shows    that $t\in H_{s, k}$.
\end{proof}

\begin{remark}\label{rem:cosLn}  
As an application of Lemma \ref{lem:cosH}, the Lissajous variety of the cycle  $C_n$ is given by
\[
\ML_n=\cos(H_{s,k}), \text{ where } H_{s,k} = \Big\{t\in \C^n: s\cdot t=2k\pi \Big\},\  s\in\{ \pm 1\}^n \text{ and } \ k\in \Z,
\]
and we have 
\begin{align}\label{eq:cosLn}
 \cos^{-1}(\ML_n)=\bigcup_{s\in\{\pm 1\}^n,k\in\Z}H_{s,k}=\{t\in \C^n: s\cdot t=2k\pi \}.
\end{align}
In particular, $\ML_n=\ML_{A'_n}$, where $A'_n$ is obtained from $A_n$ in (\ref{eq:cycleAn}) by changing the signs in its first column.
Indeed, the kernel of $A_n$ is spanned by $v=(1,-1,\ldots,-1)$, and  selecting $s=\ao$, $k=0$ gives the hyperplane  $H_{{\bf 1},0} =\ker (A'_n) = \{ t \in \mathbb{C}^n\, : \, t_1 + \cdots + t_n = 0 \}$.

The intersection $ \ML_n\cap [-1,1]^n$ is particularly relevant to our treatment as well as its preimage ${1\over \pi}\cos^{-1}(\ML_n)\cap [0,1]^n$ under the componentwise cosine map, which can in fact be covered by a union of {\em finitely many} hyperplanes since one can restrict  to $|k|\le n/2$ in (\ref{eq:cosLn}).

Let us comment briefly on the relation to elliptopes. A point $x\in\ME(C_n)$ has a PSD completion of rank at most 2 
if and only if $\arccos x$ lies in the   union of hyperplanes in  (\ref{eq:cosLn}) (see \cite[Section 2.2]{NLV-Fields}, or \cite[Lemma 4.6]{BS-2020}), which is equivalent to $x\in \ML_n$.  Thus, the hypersurface ${\cal L}_n$ is the Zariski closure of the part of the boundary of $\ME(C_n)$ consisting of the points that admit a PSD completion of rank at most $2$. On the other hand,  the Zariski closure of the full boundary of $\ME(C_n)$ consists of $\ML_n$ together with the coordinate 
hyperplanes $x_e=\pm 1$ for $e\in E_n$ (see Proposition \ref{proposition:VPn}).
\end{remark}

\section{Cycle polynomials}\label{sec:cyclepolynomial}

In \cite[Section 4.2]{Sturmfels2010MultivariateGaussians} the authors define the $n$-th \emph{cycle polynomial} $\Gamma_n$ as the unique (up to scaling) $n$-variate irreducible polynomial obtained by rationalizing the equation
\begin{equation} \label{eq:rationalized equation}
    x_1 = \cos\Big(\sum_{i=2}^n \arccos(x_i)\Big).
\end{equation}
Here, it is meant that $\Gamma_n$ is the defining polynomial of the hypersurface obtained by applying the coordinatewise cosine map to the hyperplane $t_1 = t_2 + \cdots + t_n$. As this hyperplane is the row span of the matrix $A_n$ from \eqref{eq:cycleAn} this means  that  $V(\Gamma_n)=\cos(\Row(A_n))=\ML_n$.

The identity $V(\Gamma_n) = {\cal L}_n$ determines $\Gamma_n$ up to scaling. We shall now present a new definition (Definition \ref{def:cyclepolynomial}), consistent with $V(\Gamma_n) = {\cal L}_n$, which uniquely fixes this scaling. 

Let $C_n=(1,\ldots,n)$ be the cycle graph with $n$ vertices and $E_n=E(C_n)$ be its set of edges, denoted $\{1,2\}, \ldots, \{n-1,n\}, \{1,n\}$. It is convenient to use variables indexed by the edges of  $C_n$, so $\Gamma_n\in \C[x_e:e\in E_n]$. When $C$ is a cycle in a graph $G=(V,E)$, we also denote the corresponding cycle polynomial as $\Gamma_C\in \R[x_e: e\in E(C)]$.  

It is convenient  to use the alternative, equivalent definition of the Lissajous variety $\ML_n$ as $\ML_n=\ML_{A'_n}$,
where $A'_n$ is obtained by changing the signs in the first column of $A_n$. 
The matrix $A'_n$ defines a \emph{different} toric variety $Y_{A_n'}$ but the \emph{same} Lissajous variety ${\cal L}_{A_n} = {\cal L}_{A_n'} = {\cal L}_n$ (recall Remark \ref{rem:cosLn}). 
The kernel of $A_n'$ is spanned by the all-ones vector, which has the advantage that    the defining equation of $Y_{A_n'}$ is more symmetric than that of $Y_{A_n}$. Concretely, as explained in Section \ref{sec:lissajous}, $Y_{A_n'}=V(g_n)$, where  $g_n(y) \, = \, \prod_{e\in E_n} y_e - 1$. 
Using relations (\ref{eq:detg}) and  (\ref{eq:detrep}), we obtain 
\[
\ML_n=\Big \{x\in\C^{E_n}: \prod_{\xi\in V(J_x)} g_n(\xi)=0\Big \}=\left\{x\in \C^{E_n}: h_n(x)=0\right\},
\]
where $V(J_x)$ is as in (\ref{eq:VJx}) and   the function $h_n$ is defined as 
\begin{equation}\label{eq:hn}
h_n(x) \,  = \, \prod_{\xi \in V(J_x)} g_n(\xi) \, = \, \prod_{\epsilon \in \{-1,1\}^{E_n}} \Big[ \prod_{e \in E_n} (x_e + \epsilon_e \sqrt{x_e^2-1} ) - 1 \Big].
\end{equation}
By the discussion in Section \ref{sec:lissajous},  $h_n=\det(M_{g_n})$ is a polynomial in $\Z[x]$.
It follows from results in \cite{mascarin2025lissajousvarieties} that  $h_n$ is, in fact, the square of a polynomial.

\begin{theorem} \label{lem:existence}
    For $n \geq 3$, the polynomial  $h_n$ from (\ref{eq:hn}) 
    is the square of an irreducible polynomial and the  degree of $h_n$  is equal to
   $n \binom{n-1}{\lfloor (n-1)/2 \rfloor}$.
\end{theorem}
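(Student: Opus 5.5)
The plan is to exploit the symmetry $\epsilon\mapsto-\epsilon$ in the product (\ref{eq:hn}). Write $y_e(\epsilon)=x_e+\epsilon_e\sqrt{x_e^2-1}$, so that $y_e(-\epsilon)=y_e(\epsilon)^{-1}$. Put $Y(\epsilon)=\prod_e y_e(\epsilon)$. Then the factors indexed by $\epsilon$ and $-\epsilon$ are $Y-1$ and $Y^{-1}-1=-(Y-1)/Y$, so their product is
\[
-(Y-1)^2/Y=-(Y-1)(1-Y^{-1}).
\]
Pairing the $2^{n}$ sign vectors into $2^{n-1}$ antipodal pairs gives
\[
h_n=\pm\prod_{\{\epsilon,-\epsilon\}} \frac{(Y(\epsilon)-1)^2}{Y(\epsilon)} .
\]
Since $\prod_{\epsilon} Y(\epsilon)=\prod_e (y_e y_e^{-1})^{2^{n-1}}=1$, we may also group the denominators. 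This suggests defining
\[
\gamma_n=\prod_{\{\epsilon,-\epsilon\}} \big(Y(\epsilon)^{1/2}-Y(\epsilon)^{-1/2}\big),
\]
or equivalently $\prod (\sqrt{Y}-1/\sqrt{Y})$, up to sign. Here $\sqrt{Y(\epsilon)}=\prod_e \sqrt{y_e}$, and with $y_e=e^{i t_e}$, $x_e=\cos t_e$, each factor becomes $2i\sin\big(\tfrac12\sum_e \epsilon_e t_e\big)$. Then $h_n=\pm\gamma_n^2$.

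The task is to show $\gamma_n$ is a polynomial in $x$. I would work in the trigonometric parametrization. The product $P(t)=\prod_{\{\pm\epsilon\}}\sin(\tfrac12\epsilon\cdot t)$ is invariant, up to a global sign, under each sign flip $t_e\mapsto -t_e$, since this just permutes the pairs. It is also invariant up to sign under $t_e\mapsto t_e+2\pi$, since each half-angle sine changes sign, and the total sign is $(-1)^{2^{n-1}}=1$ for $n\ge2$. Under the flip $t_e\mapsto -t_e$ the pairs are permuted, and I must check that the number of pairs whose chosen representative changes is even. It is: fixing $\epsilon_e=1$ as representative, the flip swaps the representatives with $\epsilon_e=\pm1$ consistently with all pairs.

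Hence $P$ is an even, $2\pi$-periodic function in each $t_e$ that is a trigonometric polynomial, so it is a polynomial in $\cos t_e$. Integrality (coefficients in $\mathbb{Q}$, indeed in $\mathbb{Z}$ after scaling by $2^{\ldots}$) follows from Galois invariance under $\sqrt{x_e^2-1}\mapsto-\sqrt{x_e^2-1}$. This shows $h_n$ is a square of $\gamma_n\in\C[x]$.

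Irreducibility of $\gamma_n$ is the main obstacle. Its zero set is $\ML_n$, which is irreducible, so $\gamma_n$ is a power of an irreducible polynomial up to a constant, and the issue is to rule out multiplicity. I would show $\gamma_n$ is squarefree by exhibiting a point where it vanishes simply. Take a generic $t$ with $t_1+\dots+t_n=0$ and all other signed sums nonzero modulo $2\pi$. Then exactly one factor $\sin(\tfrac12\epsilon\cdot t)$ vanishes, and simply in $t$. Since $\cos$ is a local diffeomorphism at a generic $t$ (with $\sin t_e\ne0$), the vanishing of $\gamma_n$ in $x$ is also simple there.

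For the degree, $\deg h_n$ equals $2\deg\gamma_n$, and I would compute $\deg\gamma_n$ via the growth rate along rays. Expand each $\sin(\tfrac12\epsilon\cdot t)$ through $y_e$, set $x_e=\lambda$ large, so $y_e\approx 2\lambda$ or $(2\lambda)^{-1}$. The factor for $\{\pm\epsilon\}$ has magnitude about $(2\lambda)^{|k_+(\epsilon)-k_-(\epsilon)|/2}$, where $k_\pm$ count the signs, except when the two counts are equal. The top degree of $\gamma_n$ is then at most
\[
\tfrac12\sum_{\{\pm\epsilon\}}|n-2k(\epsilon)| = \tfrac14\sum_{\epsilon}|n-2k|,
\]
and equality holds since the leading coefficients are products of nonzero constants. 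The classical binomial identity $\sum_k\binom nk|n-2k|=2\lceil n/2\rceil\binom{n}{\lfloor n/2\rfloor}$ then simplifies to $\tfrac n2\binom{n-1}{\lfloor (n-1)/2\rfloor}$, giving $\deg h_n=n\binom{n-1}{\lfloor(n-1)/2\rfloor}$. A further check is needed that the diagonal direction realizes the total degree, i.e. that the top homogeneous part does not vanish at $\ao$. If it does vanish, I would instead use a generic direction, or compute via the multiplication matrices $M_{g_n}$, whose determinant degree is the bidegree count of $\prod_e y_e-1$.
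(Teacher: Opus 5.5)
Your proof that $h_n$ is a square of an irreducible polynomial is correct, and it takes a genuinely different route from the paper. The paper does nothing by hand here. It quotes \cite[Proposition 3.5]{mascarin2025lissajousvarieties} for $\det M_{g_n}=f^w$, with $w$ the degree of $\pi_{A,b}$, and reads off $w=2$ and $\deg f$ from \cite[Proposition 2.6]{mascarin2025lissajousvarieties}. Your pairing $\epsilon\leftrightarrow-\epsilon$ is the concrete reason that $w=2$: the generic fibres are $\{y,y^{-1}\}$. It also produces an explicit square root, $\Gamma_n(\cos t)=\pm 2^{2^{n-2}}\prod_{\epsilon_{e_1}=1}\sin(\tfrac12\epsilon\cdot t)$. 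The periodicity/evenness argument for polynomiality is sound. So is the argument combining an irreducible zero set with one point of nonvanishing gradient, which rules out a higher power.

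The degree count, however, is not established as written. On the diagonal one has exactly $Y(\epsilon)=w^{k_+-k_-}$ with $w=\lambda+\sqrt{\lambda^2-1}$. So for even $n$ every balanced $\epsilon$ ($k_+=k_-$) has $Y(\epsilon)\equiv 1$, and its factor $\sqrt{Y}-1/\sqrt{Y}$ vanishes identically. Hence $\gamma_n(\lambda\mathbf{1})\equiv 0$; indeed $\Gamma_4(\lambda,\lambda,\lambda,\lambda)=0$ in Example~\ref{ex:smallGamma}. Your assertion that the leading coefficients are products of nonzero constants therefore fails exactly in the case you set aside.

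Moreover, even for odd $n$, growth along a single ray only gives the lower bound $\deg\gamma_n\ge D:=\tfrac14\sum_\epsilon|n-2k(\epsilon)|$. Nothing you wrote gives $\deg\gamma_n\le D$.

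The generic-direction fallback you mention is the right fix and should be the argument itself:
\begin{itemize}
\item Take $v\in\R^n_{>0}$ with $\prod_e v_e^{\epsilon_e}\neq 1$ for every balanced $\epsilon$; these $v$ form a nonempty open set.
\item Along $x=\lambda v$, each unbalanced factor equals a nonzero constant times $(2\lambda)^{|n-2k|/2}(1+o(1))$.
\item Each balanced factor tends to $\prod_e v_e^{\epsilon_e/2}-\prod_e v_e^{-\epsilon_e/2}\neq 0$.
\item Hence $\gamma_n(\lambda v)\sim c(v)\lambda^D$ with $c(v)\neq 0$.
\end{itemize}
A nonzero form cannot vanish on an open set, so you can choose such a $v$ off the zero set of the top-degree form of $\gamma_n$. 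There the growth exponent equals $\deg\gamma_n$, so $\deg\gamma_n=D$, and your binomial identity finishes the proof. The $M_{g_n}$ alternative, as stated, does not work: entrywise degree bounds only bound $\deg\det M_{g_n}$ from above.
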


\begin{proof}
By \cite[Proposition 3.5]{mascarin2025lissajousvarieties}, the determinant of $M_{g_n}$ is $f^w$, where $f\in\C[x]$ is a defining equation of ${\cal L}_n$ and $w$ is a positive integer (the degree of the map $\pi_{A,b}$ from Theorem \ref{thm:dimension}). For the  Lissajous hypersurface $\ML_n$, this integer is shown to be equal to $2$ in the proof of \cite[Proposition 2.6]{mascarin2025lissajousvarieties}. The same proposition states the degree of $f$, which is $1/2$ times the stated degree of $h_n$. 
\end{proof}

\begin{definition} \label{def:cyclepolynomial}
Let  $h_n$ be as in Theorem \ref{lem:existence}. The \emph{$n$-th cycle polynomial} $\Gamma_n$
 is    the unique  polynomial in  $\mathbb{C}[x_e: e \in E_n]$  that  satisfies $2^{-2^{n-1}} h_n = \Gamma_n^2$ and whose leading coefficient is equal to 1 when viewed as a univariate polynomial in any  variable $x_{e_0}$. 
\end{definition}

In Proposition \ref{prop:univleadingterm} we will show  that,   when viewing $h_n$ as a univariate polynomial in a variable $x_{e_0}$, its leading coefficient is $2^{2^{n-1}} $, which motivates the scaling in the above definition.

\begin{example} \label{ex:smallGamma}
We mention the cycle polynomials $\Gamma_n$ for small $n$ and will come back later to them in Examples \ref{ex:Gamma3} and \ref{ex:Gamma4}. 
    For $n = 3$, the polynomial $\Gamma_3$ reads  
    \[ \Gamma_3 \, = \, -1 + x_{12}^2 + x_{23}^2 + x_{13}^2 - 2\, x_{12}x_{23}x_{13}
    =-\det\left(\begin{smallmatrix} 1 & x_{12}& x_{13}\cr x_{12} & 1 & x_{23}\cr x_{13} & x_{23}& 1 \end{smallmatrix}\right), \] 
    hence it  defines the Cayley cubic surface.
        Its square is a product of eight factors, each corresponding to a sign vector $\epsilon=(\epsilon_{12},\epsilon_{23},\epsilon_{13})\in\{\pm 1\}^{E_3}$:
    \begin{align*}
        h_3 \, = \, 16 \cdot \Gamma_3^2 & \, = \, \prod_{\epsilon\in \{\pm 1\}^{E_3}} \,  \Big(x_{12} +\epsilon_{12}  \sqrt{x_{12}^2-1}\Big)\Big(x_{23} +\epsilon_{23}  \sqrt{x_{23}^2-1}\Big)\Big(x_{13}  +\epsilon_{13}  \sqrt{x_{13}^2-1}-1\Big).
    \end{align*}
The 4-th cycle polynomial $\Gamma_4$ is given by 
    \[ \begin{matrix} \Gamma_4 \, = \,  x_{12}^4 - 4 x_{12}^3 x_{23} x_{34} x_{14} + 4 x_{12}^2 x_{23}^2 x_{34}^2 + 4 x_{12}^2 x_{23}^2 x_{14}^2 - 2 x_{12}^2 x_{23}^2 + 4 x_{12}^2 x_{34}^2 x_{14}^2  - 2 x_{12}^2 x_{34}^2 \\ \quad - 2 x_{12}^2 x_{14}^2 - 4 x_{12} x_{23}^3 x_{34} x_{14} - 4 x_{12} x_{23} x_{34}^3 x_{14} - 4 x_{12} x_{23} x_{34} x_{14}^3 + 8 x_{12} x_{23} x_{34} x_{14} + x_{23}^4 \\ + 4 x_{23}^2 x_{34}^2 x_{14}^2 - 2 x_{23}^2 x_{34}^2 - 2 x_{23}^2 x_{14}^2 + x_{34}^4 - 2 x_{34}^2 x_{14}^2 + x_{14}^4. \end{matrix} \]
    Finally, $\Gamma_5$ has degree 15 and it has 339 terms. We display a few of them for concreteness: 
    \[ \Gamma_5 \, = \, x_{12}^8 - 8 x_{12}^7 x_{23} x_{34} x_{45} x_{15} + 16 x_{12}^6 x_{23}^2 x_{34}^2 x_{45}^2 + 16 x_{12}^6 x_{23}^2 x_{34}^2 x_{15}^2 -  \cdots - 4 x_{15}^2 + 1.\]
\end{example}

\subsection{Properties  of the cycle polynomial}
A first observation is that $h_n$ and $\Gamma_n$ enjoy   the following symmetry properties. 

\begin{lemma} \label{lem:symmetry}
    The polynomial $h_n$ is symmetric, i.e., it is invariant under any permutation of the variables.
    Moreover, if $\tilde{x}$ is obtained from $x \in \mathbb{C}^{E_n}$ by an even sign flip, i.e., by replacing an even number of entries $x_e$ by $\tilde{x}_e = -x_e$, then $h_n(x) = h_n(\tilde{x})$ holds. 
    The same properties hold for the cycle polynomial $\Gamma_n$. 
\end{lemma}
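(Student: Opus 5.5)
The plan is to work directly with the product formula (\ref{eq:hn}) for $h_n$. I will then transfer the symmetry properties to $\Gamma_n$ using $2^{-2^{n-1}}h_n=\Gamma_n^2$, the irreducibility from Theorem \ref{lem:existence}, and the normalization in Definition \ref{def:cyclepolynomial}.

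\emph{Permutation symmetry.} Let $\sigma$ be a permutation of $E_n$. Replacing $x$ by $x\circ\sigma$ permutes the factors $x_e+\epsilon_e\sqrt{x_e^2-1}$ inside each inner product. Reindexing the sign vectors by $\epsilon\mapsto\epsilon\circ\sigma^{-1}$ is a bijection of $\{\pm1\}^{E_n}$, and $g_n=\prod_e y_e-1$ is itself symmetric in the $y_e$. Hence the full product over $\epsilon$ is unchanged. To avoid branch issues for the square roots, I will phrase this via $V(J_x)$: the root set $V(J_{x\circ\sigma})$ is obtained by permuting coordinates of points of $V(J_x)$, and $h_n(x)=\prod_{\xi\in V(J_x)}g_n(\xi)$.

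\emph{Even sign flips.} Let $F\subseteq E_n$ with $|F|$ even, and flip $x_e\mapsto -x_e$ for $e\in F$. If $\xi_e$ is a root of $y^2-2x_ey+1$, then $-\xi_e$ is a root of $y^2+2x_ey+1$. So the map $\xi\mapsto\tilde\xi$, negating the coordinates in $F$, is a bijection $V(J_x)\to V(J_{\tilde x})$, counting multiplicities since it is induced by the algebra automorphism $y_e\mapsto -y_e$. Then $\prod_e\tilde\xi_e=(-1)^{|F|}\prod_e\xi_e=\prod_e\xi_e$, so $g_n(\tilde\xi)=g_n(\xi)$, and therefore $h_n(\tilde x)=h_n(x)$. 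Equivalently, $M_{g_n}$ at $\tilde x$ is conjugate to $M_{g_n}$ at $x$ via the corresponding automorphism of $R_x$, so the determinants agree as polynomials.

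\emph{Passing to $\Gamma_n$.} This is the only delicate step, namely controlling the sign. Let $T$ be either a permutation of variables or an even sign flip, so $T$ is a linear automorphism of $\C[x]$. From $\Gamma_n^2=2^{-2^{n-1}}h_n=2^{-2^{n-1}}T(h_n)=T(\Gamma_n)^2$ we get $T(\Gamma_n)=\pm\Gamma_n$, because $\C[x]$ is a domain. To fix the sign, choose a variable $x_{e_0}$ and compare leading coefficients as univariate polynomials.

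For a permutation $\sigma$, the leading coefficient of $T(\Gamma_n)$ in $x_{\sigma(e_0)}$ corresponds to that of $\Gamma_n$ in $x_{e_0}$. That coefficient equals $1$ by Definition \ref{def:cyclepolynomial}, which applies to every variable. Hence the sign is $+$.

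For a sign flip on $F$, pick $e_0\notin F$ if $F\neq E_n$. The leading coefficient in $x_{e_0}$, a polynomial in the remaining variables, equals $1$ and is unaffected by the flips, so the sign is $+$. If $F=E_n$ with $n$ even, flip instead through two steps, $F_1=\{e_1,e_2\}$ and $F_2=E_n\setminus F_1$, each of which is even and proper when $n\ge4$; composing the two flips gives the result.

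The main obstacle I expect is making the leading-coefficient claim rigorous, namely that the leading coefficient is the constant $1$ rather than merely a polynomial whose normalization the definition fixes. I will take Definition \ref{def:cyclepolynomial}'s normalization, as justified by Proposition \ref{prop:univleadingterm}, at face value.
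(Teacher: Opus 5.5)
Your proposal is correct and takes the same route as the paper, which only says that both invariances follow from the product formula (\ref{eq:hn}) and transfer to $\Gamma_n$ via Definition \ref{def:cyclepolynomial}; your argument that $T(\Gamma_n)=\pm\Gamma_n$, with the sign fixed by a constant leading coefficient in a suitable variable, supplies exactly the detail the paper leaves implicit. One optional refinement: for permutations, work with transpositions and read off the sign in a third variable that the transposition fixes, so that you never need the fact that the normalization has the same sign in every variable.
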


\begin{proof}
This follows easily using the definition of $h_n$ in (\ref{eq:hn}) and of $\Gamma_n$ in Definition \ref{def:cyclepolynomial}.
 \end{proof}

Hence, contrary to what the name suggests, the cycle polynomial has more than only cyclic symmetry. This is also reflected in the symmetry of the row span of $A_n'$ and of the toric variety $Y_{A_n'}$. 
 The next result follows directly from the invariance under even sign flips.
 
\begin{corollary} \label{cor:even}
    Let $E' \subseteq E$ be a subset of edges with even cardinality. The cycle polynomial $\Gamma_n$, viewed as a polynomial in $x_e$ for $e \in E'$, is even. In other words, if we expand
    \begin{align}\label{eq:cycle-even}
     \Gamma_n \, = \, \sum_{\alpha \in \mathbb{N}^{E'}} c_\alpha(x_{E \setminus E'}) \, \prod_{e \in E'} x_e^{\alpha_e}, 
     \end{align}
    where the coefficients $c_\alpha$ are polynomials in the variables $x_{E \setminus E'} = (x_e \, : \, e \in E \setminus E')$, then $\sum_{e \in E'} \alpha_e$ is even for each $\alpha$ such that $c_\alpha \neq 0$. 
\end{corollary}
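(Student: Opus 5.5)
We will deduce the corollary directly from the even sign-flip invariance in Lemma \ref{lem:symmetry}. The idea is to show that every monomial of odd total degree in the variables $x_{E'}$ must cancel.

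First we introduce a scaling parameter. Fix $\lambda\in\C$ and consider the substitution $x_e\mapsto \lambda x_e$ for $e\in E'$, leaving $x_{E\setminus E'}$ unchanged. Grouping the expansion \eqref{eq:cycle-even} by total degree $d=\sum_{e\in E'}\alpha_e$ gives
\[
\Gamma_n(\lambda x_{E'},x_{E\setminus E'})=\sum_{d\ge 0}\lambda^d P_d(x), \qquad P_d=\sum_{|\alpha|=d}c_\alpha(x_{E\setminus E'})\prod_{e\in E'}x_e^{\alpha_e}.
\]
Taking $\lambda=-1$ is exactly the substitution flipping the signs of all $x_e$ with $e\in E'$. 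This is an even sign flip because $|E'|$ is even. Lemma \ref{lem:symmetry} therefore gives
\[
\sum_d P_d=\sum_d(-1)^dP_d
\]
as polynomial identities. Hence $\sum_{d\text{ odd}}P_d=0$.

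Next we extract the individual coefficients. The polynomials $P_d$ for distinct $d$ involve disjoint sets of monomials, so each odd $P_d$ vanishes. Moreover, distinct exponent vectors $\alpha$ give linearly independent monomials over $\C[x_{E\setminus E'}]$. It follows that $c_\alpha=0$ whenever $|\alpha|$ is odd, which is the claim.

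The only point needing care is that Lemma \ref{lem:symmetry} is invoked for $\Gamma_n$ itself and not just for $h_n$. From $h_n(\tilde x)=h_n(x)$ and $h_n=2^{2^{n-1}}\Gamma_n^2$ we only get $\Gamma_n(\tilde x)=\pm\Gamma_n(x)$. To fix the sign, view $\Gamma_n$ as a univariate polynomial in some variable $x_{e_0}$ with $e_0\notin E'$; such an $e_0$ exists unless $E'=E$. The sign flip does not change its leading coefficient $1$, so the sign is $+$. If $E'=E$ (which requires $n$ even), we instead apply Lemma \ref{lem:symmetry} to two smaller even sets in turn, each of which misses some variable. Since the lemma already asserts the property for $\Gamma_n$, this step is covered, and no real obstacle remains.
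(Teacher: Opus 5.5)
Your proof is correct and follows the paper's route. The paper simply says the corollary follows directly from the invariance of $\Gamma_n$ under even sign flips (Lemma~\ref{lem:symmetry}), applied to flipping all variables indexed by $E'$, which is exactly your $\lambda=-1$ argument. Your extra step fixing the sign in $\Gamma_n(\tilde x)=\pm\Gamma_n(x)$, via the monic leading coefficient in an unflipped variable, is a valid elaboration of the lemma's claim for $\Gamma_n$ rather than a different approach.
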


To simplify the exposition, we introduce some notation that we will also use in the next sections.
Recall that $E_n=E(C_n)$.  
We define the following algebraic functions of $x$:
\begin{align}\label{eq:zepsilon}
z_\epsilon= \prod_{e\in E_n} \left( x_e + \epsilon_e \sqrt{x_e^2-1} \right)^{|\epsilon_e|} \ \text{ for any } \epsilon \in \{\pm1, 0\}^{E_n}.
\end{align}
For convenience we may also index $z$ by $\{\pm,0\}$ instead of $\{\pm 1,0\}$, e.g., writing $z_{+-0}$ for $z_{1,-1,0}$. 
Note that $z_\epsilon \cdot z_{-\epsilon} =1$ and $z_{\epsilon}=\prod_{i=1}^n z_{\azero, \epsilon_i, \azero}$, where $\epsilon_i$ is placed at the $i$-th position and $\azero$ denotes a zero vector of appropriate length. 
If $\delta$, $\delta'$ are sequences whose concatenation $(\delta,\delta')$ lies in $\{\pm 1,0\}^{E_n}$, then $z_{\delta,\delta'}=z_{\delta,\azero }\cdot z_{\azero, \delta'}$ holds. 
Using  (\ref{eq:zepsilon}), we can rewrite $h_n$ from (\ref{eq:hn}) compactly~as
\begin{align}\label{eq:hnz}
h_n=\prod_{\epsilon\in \{\pm 1\}^{E_n}} (z_\epsilon -1).
\end{align}

The next proposition motivates the factor $2^{-2^{n-1}}$ in Definition \ref{def:cyclepolynomial}: this scaling makes $\Gamma_n$ monic when viewed as a univariate polynomial in any of its variables.  

\begin{proposition} \label{prop:univleadingterm}
 Consider two distinct edges $e_0,e_1\in E_n$. When viewed as a univariate polynomial in $x_{e_0}$, the polynomial $h_n$ 
has degree $2^{n-1}$ with leading coefficient $2^{2^{n-1}}$, and its roots are given by the following algebraic functions of $x_e$ (for $e \in E_n \setminus \{e_0\}$):
\begin{align}\label{eq:rdelta}
 r_\delta \,  = \, \frac{z_{0,\delta} + z_{0,-\delta}}{2}  \quad \text{ for } \delta \in \{-1,1\}^{E_n \setminus \{e_0\}}, \end{align}
where $z_{0,\delta} \, = \, \prod_{e \in E \setminus \{e_0\}} (x_{e} + \delta_{e} \, \sqrt{x_{e}^2-1} )$ as in (\ref{eq:zepsilon}), with the first zero entry in the subscript corresponding to the variable $x_{e_0}$.
  Moreover,  we have
 \begin{align}\label{eq:hnroot}
 h_n= 2^{2^{n-1}} \prod_{\delta\in \{\pm 1\}^{E_n\setminus \{e_0\}}} z_{0,\delta} (x_{e_0}-r_\delta)
 =2^{2^{n-1}} \prod_{\delta\in \{\pm 1\}^{E_n\setminus \{e_0\}}: \delta_{e_1}=1}  (x_{e_0}-r_\delta)^2.
     \end{align}
    \end{proposition}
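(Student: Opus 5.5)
The plan is to factor $h_n$ in \eqref{eq:hnz} by pairing sign vectors according to the sign on $e_0$. Write $\epsilon=(\epsilon_0,\delta)$ with $\epsilon_0\in\{\pm1\}$ and $\delta\in\{\pm1\}^{E_n\setminus\{e_0\}}$. Put $w=x_{e_0}+\sqrt{x_{e_0}^2-1}$, so that $w^{-1}=x_{e_0}-\sqrt{x_{e_0}^2-1}$ and $w+w^{-1}=2x_{e_0}$. Then $z_{\epsilon}=w^{\epsilon_0}z_{0,\delta}$, and the two factors with a given $\delta$ multiply to
\[
(w z_{0,\delta}-1)(w^{-1}z_{0,\delta}-1)=z_{0,\delta}^2-(w+w^{-1})z_{0,\delta}+1=z_{0,\delta}\bigl(z_{0,\delta}+z_{0,-\delta}-2x_{e_0}\bigr),
\]
where we used $z_{0,\delta}z_{0,-\delta}=1$. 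This equals $-2z_{0,\delta}(x_{e_0}-r_\delta)$. Taking the product over all $2^{n-1}$ vectors $\delta$ gives
\[
h_n=(-2)^{2^{n-1}}\prod_\delta z_{0,\delta}(x_{e_0}-r_\delta)=2^{2^{n-1}}\prod_\delta z_{0,\delta}(x_{e_0}-r_\delta),
\]
since $2^{n-1}$ is even for $n\ge 2$. This is the first equality in \eqref{eq:hnroot}.

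For the second equality, pair $\delta$ with $-\delta$; this is a fixed-point-free involution, and it swaps the value of $\delta_{e_1}$. By construction $r_\delta=r_{-\delta}$ and $z_{0,\delta}z_{0,-\delta}=1$. So each pair contributes $(x_{e_0}-r_\delta)^2$, and choosing the representative with $\delta_{e_1}=1$ gives the stated product.

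From the displayed factorization, $h_n$ is the product of $2^{n-1}$ linear factors in $x_{e_0}$, times $2^{2^{n-1}}\prod_\delta z_{0,\delta}=2^{2^{n-1}}$, with coefficients that do not involve $x_{e_0}$. Hence, as a univariate polynomial in $x_{e_0}$, it has degree $2^{n-1}$, leading coefficient $2^{2^{n-1}}$, and roots $r_\delta$. The degree statement needs no cancellation argument, because the leading coefficient is the nonzero constant $2^{2^{n-1}}$.

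The main obstacle is making the manipulation of the multivalued square roots rigorous. I would handle it in one of two ways. The first is to work over a splitting field: adjoin formal elements $s_e$ with $s_e^2=x_e^2-1$ to the field $\C(x)$. Then \eqref{eq:hn} is an identity in that field extension, and the $z_\epsilon$ are genuine elements of it. The second is to argue pointwise at generic $x$, where the identity $z_{0,\delta}z_{0,-\delta}=1$ is exact because $(x_e+\sqrt{x_e^2-1})(x_e-\sqrt{x_e^2-1})=1$. In either case, the product over all sign patterns is independent of the choice of branch. We already know $h_n\in\Z[x]$ from Section~\ref{sec:lissajous}, so the equality of polynomials follows from equality on a dense set.
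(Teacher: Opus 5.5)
Your proposal is correct and follows the paper's proof essentially step by step. You split $\epsilon=(\epsilon_0,\delta)$ and merge the two $\epsilon_0$-factors into $-2z_{0,\delta}(x_{e_0}-r_\delta)$. You then pair $\delta$ with $-\delta$ using $z_{0,\delta}z_{0,-\delta}=1$ and $r_\delta=r_{-\delta}$. Your explicit treatment of the sign $(-2)^{2^{n-1}}=2^{2^{n-1}}$ and of the square-root branches (via formal $s_e$ with $s_e^2=x_e^2-1$) fills in details that the paper leaves implicit.
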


\begin{proof}
Starting  with the definition of $h_n$ in (\ref{eq:hnz}), we obtain
\begin{align*}
h_n(x)& =\prod_{\epsilon \in \{-1,1\}^{E_n}} (z_\epsilon -1)
= \prod_{\delta\in\{\pm 1\}^{E_n\setminus \{e_0\}}} \prod_{\epsilon_0\in \{\pm 1\}}
(z_{\epsilon_0,\delta}-1)\\
& =\prod_{\delta\in\{\pm 1\}^{E_n\setminus \{e_0\}}} \prod_{\epsilon_0\in \{\pm 1\}}(z_{\epsilon_0,\azero} z_{0,\delta}-1)
 =  \prod_{\delta\in\{\pm 1\}^{E_n\setminus \{e_0\}}}
(z_{+\azero}z_{0,\delta}-1)\cdot (z_{-\azero}z_{0,\delta}-1)\\
& =  \prod_{\delta\in\{\pm 1\}^{E_n\setminus \{e_0\}}}
(z_{0,\delta}^2-z_{0,\delta}(z_{+\azero}+z_{-\azero})+1)
= \prod_{\delta\in\{\pm 1\}^{E_n\setminus \{e_0\}}}
(z_{0,\delta}^2 
-2x_{e_0}   z_{0,\delta}
+1)\\
&=  \prod_{\delta\in\{\pm 1\}^{E_n\setminus \{e_0\}}} \Big[2 z_{0,\delta}\cdot (x_{e_0}-r_\delta)\Big].
\end{align*}
In this derivation, we used the fact that $z_{+\azero} z_{-\azero}=1$, $z_{+\azero}+z_{-\azero}=2x_{e_0}$,
and the identities $z_{0,\delta}^2-2x_{e_0}z_{0,\delta}+1 =-2z_{0,\delta} (x_{e_0}-{1\over 2}(z_{0,\delta}+ z_{0,\delta}^{-1}))
= -2z_{0,\delta} (x_{e_0}-r_\delta)$.
This gives the first representation of $h_n$ in (\ref{eq:hnroot}). 
To get the second one, consider the partition of $\{\pm 1\}^{E_n\setminus\{e_0\}}$ as $ \Delta\cup (-\Delta)$, where 
$\Delta=\{\delta\in \{\pm 1\}^{E_n\setminus\{e_0\}}:\delta_{e_1}=1\}$ and  $-\Delta=\{-\delta:\delta\in\Delta\}$.
Then, using  the fact that $z_{0,\delta} z_{0,-\delta}=1$ and $r_\delta=r_{-\delta}$, we obtain
\[
\prod_{\delta\in \{\pm 1\}^{E_n\setminus \{e_0\}}} z_{0,\delta} (x_{e_0}-r_\delta)
 =\prod_{\delta\in \Delta} z_{0,\delta} (x_{e_0}-r_\delta) \cdot z_{0,-\delta} (x_{e_0}-r_{-\delta})
 = \prod_{\delta\in \Delta} (x_{e_0}-r_\delta)^2,
 \]
which settles  (\ref{eq:hnroot}). So, the degree of $h_n$ in $x_{e_0}$ is $2^{n-1}$, with leading coefficient $2^{2^{n-1}}$.
\end{proof}

Proposition \ref{prop:univleadingterm} has the following implication for the cycle polynomial $\Gamma_n$ (Definition \ref{def:cyclepolynomial}).

\begin{corollary} \label{cor:univleadingterm}
  Consider two distinct edges $e_0,e_1\in E_n$. Let $r_\delta$ be as in (\ref{eq:rdelta}). When viewed as a univariate polynomial in $x_{e_0}$, $\Gamma_n$ is a monic polynomial of degree $2^{n-2}$, and it reads
  \begin{align}\label{eq:Gammauniv}
\Gamma_n= \prod_{\delta\in \{\pm 1\}^{E_n\setminus \{e_0\}}: \delta_{e_1}=1} (x_{e_0}-r_\delta).
\end{align}
\end{corollary}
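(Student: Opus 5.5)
The plan is to combine the second factorization in \eqref{eq:hnroot} from Proposition \ref{prop:univleadingterm} with the normalization in Definition \ref{def:cyclepolynomial}. Set
\[
P \, = \, \prod_{\delta\in \{\pm 1\}^{E_n\setminus \{e_0\}}: \delta_{e_1}=1} (x_{e_0}-r_\delta).
\]
The set of such $\delta$ has $2^{n-2}$ elements, so $P$ is a monic polynomial of degree $2^{n-2}$ in $x_{e_0}$. A priori its coefficients are algebraic functions of the remaining variables $x_e$, $e\neq e_0$. By \eqref{eq:hnroot} we have $2^{-2^{n-1}}h_n = P^2$. Definition \ref{def:cyclepolynomial} gives $2^{-2^{n-1}}h_n=\Gamma_n^2$, and therefore $\Gamma_n^2=P^2$.

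Next I would compare $\Gamma_n$ and $P$. View both as elements of $K[x_{e_0}]$, where $K$ is an algebraic closure of $\C(x_e: e\neq e_0)$. Then $(\Gamma_n-P)(\Gamma_n+P)=0$ in the domain $K[x_{e_0}]$, so $\Gamma_n=\pm P$. Here I should check that the $r_\delta$ are genuinely elements of $K$, i.e.\ that a consistent choice of square roots $\sqrt{x_e^2-1}$ has been fixed. This holds because \eqref{eq:hnroot} is an identity of algebraic functions for any fixed choice of branches, and the product over all $\delta$ is independent of that choice.

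To fix the sign, compare leading coefficients in $x_{e_0}$. Since $\Gamma_n^2=P^2$, the $x_{e_0}$-degree of $\Gamma_n$ is $2^{n-2}$, and the leading coefficient $c$ of $\Gamma_n$ (a polynomial in the other variables) satisfies $c^2=1$, so $c=\pm1$. Definition \ref{def:cyclepolynomial} requires $\Gamma_n$ to have leading coefficient $1$ as a univariate polynomial in $x_{e_0}$, which forces $c=1$ and hence $\Gamma_n=P$. This gives \eqref{eq:Gammauniv} together with the monicity and degree claims.

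The only subtle point is the well-definedness of the normalization itself: a single polynomial must be monic in every variable simultaneously. Theorem \ref{lem:existence} gives $\Gamma_n$ up to sign, and the argument above shows that in each variable its leading coefficient is $\pm 1$. The symmetry of $h_n$ (Lemma \ref{lem:symmetry}) means any root $\pm\Gamma_n$ is again a root after permuting variables. I expect, though the plan does not fully settle this, that the sign of the leading coefficient is the same for every variable, e.g.\ by comparing the $x_{e_0}^{2^{n-2}}$ and $x_{e_1}^{2^{n-2}}$ terms as pure powers of $\Gamma_n$ via symmetry. Since the corollary takes Definition \ref{def:cyclepolynomial} as given, I would treat this as already settled by that definition.
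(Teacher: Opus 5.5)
Your proposal is correct and is the argument the paper leaves implicit when it calls this a corollary: the second factorization in \eqref{eq:hnroot}, combined with $2^{-2^{n-1}}h_n=\Gamma_n^2$, gives $\Gamma_n=\pm P$, and the normalization in Definition~\ref{def:cyclepolynomial} fixes the sign. The sign-consistency issue you flag concerns the well-definedness of that definition, not the corollary, and it closes quickly: by the symmetry of $h_n$, swapping $x_{e_0}$ and $x_{e_1}$ sends $\Gamma_n$ to $\pm\Gamma_n$, and the minus sign would force $x_{e_0}-x_{e_1}$ to divide the irreducible polynomial $\Gamma_n$ of degree at least $2$, so the leading coefficients in $x_{e_0}$ and $x_{e_1}$ must agree.
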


\begin{example}\label{ex:Gamma3}
Let $n = 3$. Let us order the edges in $E_3=E(C_3)$ as $\{1,2\},\{2,3\},\{1,3\}$ and set $e_0=\{1,2\}$, $e_1=\{2,3\}$.
Equation (\ref{eq:Gammauniv}) in this example reads
\[
\Gamma_3=(x_{12}-r_{++})(x_{12}-r_{-+})= x_{12}^2-x_{12}(r_{++}+r_{-+})+r_{++}r_{-+}.
\]
One can easily verify that  
 $r_{++}+r_{-+}= {1\over 2}(z_{++}+z_{--}+z_{-+}+z_{+-})=2 x_{13}x_{23}$ and that 
 $r_{++}r_{-+}={1\over 4}(z_{++}+z_{--})(z_{-+}+z_{+-})=  x_{13}^2+x_{23}^2-1$.
 Hence we get,  as mentioned Example \ref{ex:smallGamma},
    \begin{align*}
   \Gamma_3= x_{12}^2 -2 x_{12}x_{13}x_{23}+ x_{13}^2+x_{23}^2-1.
  \end{align*}
\end{example}

As an application of Corollary \ref{cor:univleadingterm}, the Lissajous variety $\ML_n=V(\Gamma_n)$ does not contain any   affine subspace obtained by fixing a strict subset of coordinates. 
This will be useful in Section~\ref{sec:proof-prop}.

\begin{lemma} \label{lem:specializeGamma}
Consider a strict subset $J \subsetneq [n]$ and $z=(z_j)_{j\in J}\in\C^J$.
Define the $(n-|J|)$-variate polynomial $f\in \C[x_i: i\in [n]\setminus J]$, obtained by setting $x_j=z_j$ for $j\in J$, in the cycle polynomial $\Gamma_n(x)$. Then, the polynomial $f$  is not identically zero.
\end{lemma}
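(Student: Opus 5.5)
Since $J\subsetneq[n]$, there is an index $i_0\in[n]\setminus J$. The plan is to view $\Gamma_n$ as a univariate polynomial in the free variable $x_{i_0}$ and apply Corollary~\ref{cor:univleadingterm}. That corollary says $\Gamma_n$ is monic of degree $2^{n-2}$ in $x_{i_0}$, so
\[
\Gamma_n \;=\; x_{i_0}^{2^{n-2}} + \sum_{k<2^{n-2}} c_k(x_{[n]\setminus\{i_0\}})\, x_{i_0}^{k},
\]
where the coefficients $c_k$ are polynomials in the remaining variables. The point to note is that the leading coefficient is the constant $1$, not merely a polynomial that is nonzero. (Monicity follows directly from (\ref{eq:Gammauniv}) as a product of linear factors $x_{e_0}-r_\delta$. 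These factors are algebraic functions rather than polynomials, but their symmetric functions are polynomials, since $\Gamma_n$ is one.)

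Next, substitute $x_j=z_j$ for $j\in J$. Because $i_0\notin J$, the monomial $x_{i_0}^{2^{n-2}}$ is untouched by the substitution. The lower-order terms become $c_k|_{x_J=z}\,x_{i_0}^k$ with $k<2^{n-2}$, so none of them can contribute to, or cancel, the power $x_{i_0}^{2^{n-2}}$. Hence $f$, viewed as a polynomial in $x_{i_0}$ with coefficients in $\C[x_i : i\notin J\cup\{i_0\}]$, still has leading term $x_{i_0}^{2^{n-2}}$. In particular $f\neq 0$.

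The only step that needs care is justifying that the coefficient of $x_{i_0}^{2^{n-2}}$ is exactly $1$. This comes from Definition~\ref{def:cyclepolynomial} together with Corollary~\ref{cor:univleadingterm}: the scaling $2^{-2^{n-1}}$ cancels the leading coefficient $2^{2^{n-1}}$ of $h_n$ from Proposition~\ref{prop:univleadingterm}. Beyond that, the argument is immediate and works uniformly for every $n\ge 3$.
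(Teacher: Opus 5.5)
Your proof is correct and matches the paper's argument. You pick a free index $i_0\notin J$ and invoke Corollary~\ref{cor:univleadingterm}, so the leading term $x_{i_0}^{2^{n-2}}$ has constant coefficient and survives any specialization of the other variables; only the fact that this coefficient is a nonzero constant is actually needed.
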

\begin{proof}
This follows from Corollary \ref{cor:univleadingterm}, which states that $\Gamma_n$ is monic when viewed as a univariate polynomial in any of its variables $x_i$. In particular, we may pick $i\in [n]\setminus J$.\end{proof}

Next, we compute the degree of $\Gamma_n$ as a bivariate polynomial in two selected variables, since we will need this result later in Section \ref{sec:cyclehomogeneous}.

\begin{proposition} \label{prop:bivariatedegree}
    Viewed as a bivariate polynomial in any two distinct  edge variables $x_{e_0}, x_{e_1}$, the cycle polynomial $\Gamma_n$ has degree $2^{n-2}$. 
\end{proposition}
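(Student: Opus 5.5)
We need the total degree of $\Gamma_n$ in the pair $(x_{e_0},x_{e_1})$, treating the other variables as coefficients. Corollary~\ref{cor:univleadingterm} shows that $\Gamma_n$ is monic of degree $2^{n-2}$ in $x_{e_0}$. The monomial $x_{e_0}^{2^{n-2}}$ therefore appears with coefficient $1$, so the bivariate degree is at least $2^{n-2}$. The real content is the upper bound: no monomial $x_{e_0}^a x_{e_1}^b$ with $a+b>2^{n-2}$ occurs.

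For the upper bound, work with the factorization (\ref{eq:Gammauniv}) and specialize along a line. Set $x_{e_0}=\lambda s$ and $x_{e_1}=\mu s$ for generic constants $\lambda,\mu$, and keep the remaining variables fixed generic. The bivariate degree equals the degree in $s$ of the top homogeneous part, which is the growth order of $\Gamma_n$ as $s\to\infty$.

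By (\ref{eq:Gammauniv}), $\Gamma_n=\prod_{\delta}(\lambda s - r_\delta)$, where $\delta$ ranges over $\delta\in\{\pm1\}^{E_n\setminus\{e_0\}}$ with $\delta_{e_1}=1$. Each root factors as
\[
r_\delta=\tfrac12\bigl(w\, z_{0,\delta'} + w^{-1} z_{0,-\delta'}\bigr),
\]
where $w=x_{e_1}+\sqrt{x_{e_1}^2-1}$ and $\delta'$ denotes the restriction of $\delta$ to the other $n-2$ edges. As $s\to\infty$ we have $w\sim 2\mu s$ and $w^{-1}=O(1/s)$, so $r_\delta=O(s)$. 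Hence each of the $2^{n-2}$ factors $\lambda s-r_\delta$ grows at most linearly in $s$, and $\Gamma_n=O(s^{2^{n-2}})$.

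To turn this into a statement about degree, consider a polynomial $P$ in $x_{e_0},x_{e_1}$ whose coefficients are polynomials in the other variables. If $P$ has bivariate degree $D$, then its restriction to a generic line through the origin has degree exactly $D$ in $s$, because the top homogeneous form does not vanish at generic $(\lambda,\mu)$ and generic values of the other variables. So the bound $O(s^{2^{n-2}})$ gives $D\le 2^{n-2}$. Combined with the lower bound, this proves equality.

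The main obstacle is making the asymptotic argument rigorous, since the roots $r_\delta$ are algebraic functions with branch choices. There are two ways to handle this. One is to work in Puiseux series in $1/s$, fixing the branch $\sqrt{x_{e_1}^2-1}=\mu s\sqrt{1-1/(\mu s)^2}$. The other, which I would try first, avoids analysis altogether by doing the computation with the polynomial $h_n$. From (\ref{eq:hnz}),
\[
h_n=\prod_{\epsilon}(z_\epsilon-1), \qquad z_\epsilon=z_{\epsilon_0}\, z_{\epsilon_1}\, z_{\epsilon'},
\]
where $\epsilon$ ranges over $\{\pm1\}^{E_n}$ and $z_{\epsilon_0},z_{\epsilon_1}$ are the factors coming from $e_0,e_1$. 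Assign weight $1$ to $x_{e_0}$ and $x_{e_1}$. Each of the factors $x_{e_i}\pm\sqrt{x_{e_i}^2-1}$ has weighted order $\pm1$ at infinity. Consequently $z_\epsilon-1$ has order $\max(\epsilon_0+\epsilon_1,0)$.

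The total order is the sum over all $2^n$ sign vectors $\epsilon$. For each fixed $\epsilon'$ on the other $n-2$ edges, the four choices of $(\epsilon_0,\epsilon_1)$ contribute $2+0+0+0=2$. Summing over the $2^{n-2}$ choices of $\epsilon'$ gives $2\cdot 2^{n-2}=2^{n-1}$. Since $h_n$ is a constant multiple of $\Gamma_n^2$, this says $\deg h_n=2^{n-1}$ in the pair, hence $\Gamma_n$ has bivariate degree $2^{n-2}$. This is an exact computation rather than just an upper bound, provided the leading terms do not cancel. They do not cancel because the product of the leading terms is nonzero at generic points.
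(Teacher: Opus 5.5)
Your argument is correct, and the route you would try first is essentially the paper's proof: it also starts from $h_n=\prod_{\epsilon}(z_\epsilon-1)$, groups the $2^n$ factors by the signs on $E_n\setminus\{e_0,e_1\}$, shows that each group of four contributes degree $2$ in $(x_{e_0},x_{e_1})$, and then halves $2^{n-1}$ using $h_n=2^{2^{n-1}}\Gamma_n^2$. The only difference is that the paper computes each group's degree exactly and purely algebraically, writing it as a quartic in $z_{0,0,\delta}$ whose coefficients are the elementary symmetric functions $\sigma_4=1$, $\sigma_1=\sigma_3=4x_{e_0}x_{e_1}$, $\sigma_2=4x_{e_0}^2+4x_{e_1}^2-2$ of the four $(e_0,e_1)$-factors; this avoids your branch choices and generic-line bookkeeping, and it gives the exact degree without a separate monicity lower bound.
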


\begin{proof}
By Lemma \ref{lem:symmetry}, it suffices to show that the   degree of $h_n$ as a bivariate polynomial in $x_{e_0}, x_{e_1}$ is equal to $2^{n-1}$.
The proof is analogous to that of Proposition \ref{prop:univleadingterm}, only a bit more technical.
Set $F=E_n\setminus \{e_0,e_1\}$ and assume that $e_0,e_1$ come first and second in the ordering of $E_n$ used in the subscript of $z_\epsilon$.
We start from the definition (\ref{eq:hnz})  for $h_n$, which we now rewrite as 
\begin{align*}
h_n & = \prod_{\delta \in \{\pm 1\}^F} \prod_{\epsilon_0,\epsilon_1\in\{\pm 1\}} 
(z_{\epsilon_0,\epsilon_1,\delta}-1) 
= \prod_{\delta \in \{\pm 1\}^F} \prod_{\epsilon_0,\epsilon_1\in\{\pm 1\}}
(z_{\epsilon_0,\epsilon_1,\azero}z_{0,0,\delta}-1)\\
&= \prod_{\delta \in \{\pm 1\}^F} 
(z_{++\azero}z_{0,0,\delta}-1)(z_{+-\azero}z_{0,0,\delta}-1)(z_{-+\azero}z_{0,0,\delta}-1)(z_{--\azero}z_{0,0,\delta}-1)\\
&= \prod_{\delta \in \{\pm 1\}^F} 
[(z_{0,0,\delta})^4 \sigma_4-(z_{0,0,\delta})^3\sigma_3+(z_{0,0,\delta})^2\sigma_2 -z_{0,0,\delta}\sigma_1+1],
\end{align*}
  where $\sigma_k$ is the sum of the products of $k$ factors among $z_{++\azero}, z_{+-\azero}, z_{-+\azero}, z_{--\azero}$ for $k=1,2,3,4$. We have 
  $\sigma_4=z_{++\azero} z_{+-\azero} z_{-+\azero} z_{--\azero}=1$, $\sigma_3=\sigma_1=z_{++\azero}+ z_{+-\azero}+ z_{-+\azero}+ z_{--\azero}= 4x_{e_0}x_{e_1}$, and $\sigma_2= 4x_{e_0}^2+4x_{e_1}^2-2$.
  Hence, for any $\delta$, the inner summation has   degree 2 in $x_{e_0},x_{e_1}$, and thus $h_n$ has   degree $2\cdot 2^{n-2}=2^{n-1}$ in $x_{e_0},x_{e_1}$, as desired.
\end{proof}

We will also need the following property of the roots $r_\delta$ in (\ref{eq:rdelta}).

\begin{lemma}
\label{lem:z}
For $\delta\in\{\pm 1\}^{E_n\setminus \{e_0\}}$ and $\epsilon_0\in \{\pm 1\}$, there exists $\eta_{\delta}\in\{\pm 1\}$ such that 
\[
r_\delta +\epsilon_0\sqrt{r_\delta^2 -1}= z_{0,\epsilon_0\cdot \eta_\delta \cdot  \delta}.
\]
\end{lemma}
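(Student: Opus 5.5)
Write $w = z_{0,\delta}$, so that $z_{0,-\delta} = w^{-1}$ and $r_\delta = \tfrac12 (w + w^{-1})$. The identity reduces to a one-variable computation in $w$, applied with the same branch conventions as in (\ref{eq:zepsilon}).

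First we compute $r_\delta^2 - 1$. We have
\[
r_\delta^2 - 1 = \tfrac14 (w^2 + 2 + w^{-2}) - 1 = \tfrac14 (w - w^{-1})^2 .
\]
Hence any square root of $r_\delta^2-1$ has the form $\sqrt{r_\delta^2-1} = \eta_\delta \cdot \tfrac12 (w - w^{-1})$ for some sign $\eta_\delta \in \{\pm 1\}$. This sign records which branch of the square root is used: the value $\sqrt{r_\delta^2-1}$ depends only on $r_\delta$, while $w$ depends on $\delta$.

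Next we substitute. This gives
\[
r_\delta + \epsilon_0 \sqrt{r_\delta^2-1} = \tfrac12 (w + w^{-1}) + \epsilon_0 \eta_\delta \, \tfrac12 (w - w^{-1}).
\]
If $\epsilon_0 \eta_\delta = 1$, the right-hand side equals $w = z_{0,\delta}$. If $\epsilon_0 \eta_\delta = -1$, it equals $w^{-1} = z_{0,-\delta}$. In both cases the result is $z_{0,\epsilon_0 \eta_\delta \delta}$, as claimed.

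The only delicate point is that $\eta_\delta$ does not depend on $\epsilon_0$. This holds because $\sqrt{r_\delta^2-1}$ is a single fixed choice of square root, so its relation to $\tfrac12(w - w^{-1})$ is one sign determined by $\delta$ alone. Here we use that the $z_{0,-\delta} = z_{0,\delta}^{-1}$ identity holds for the consistent branch choices already fixed in the definition of $z_\epsilon$, as noted after (\ref{eq:zepsilon}). No further obstacle is expected.
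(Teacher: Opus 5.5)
Your proof is correct and is the same argument as the paper's: you write $r_\delta^2-1=\tfrac14(z_{0,\delta}-z_{0,-\delta})^2$ using $z_{0,\delta}z_{0,-\delta}=1$, fix the sign $\eta_\delta$ relating the chosen square root to $\tfrac12(z_{0,\delta}-z_{0,-\delta})$, and then split into the cases $\epsilon_0\eta_\delta=\pm 1$. Your remark that $\eta_\delta$ does not depend on $\epsilon_0$ makes explicit a point the paper leaves implicit.
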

\begin{proof}
As $r_\delta={1\over 2}(z_{0,\delta}+z_{0,-\delta})$, we get $r_\delta^2 -1= {1\over 4}(z_{0,\delta}-z_{0,-\delta})^2$. This uses $z_{0,\delta}z_{0,-\delta} = 1$. Hence, for some  $\eta_\delta\in\{\pm 1\}$, 
 $\sqrt{r_\delta^2-1}=\eta_\delta {1\over 2}(z_{0,\delta}-z_{0,-\delta})$. Thus,
$r_\delta +\epsilon_0\sqrt{r_\delta^2-1}
= {1\over 2} z_{0,\delta}(1+\epsilon_0\eta_\delta) +{1\over 2}z_{0,-\delta}(1-\epsilon_0\eta_\delta)
= z_{0,\epsilon_0\cdot \eta_\delta \cdot \delta},$
as desired.
\end{proof}

\subsection{Cycle polynomials and resultants}\label{sec:resultant}

Here, we show yet another (recursive) representation for the cycle polynomial $\Gamma_n$, as the resultant of two `smaller' cycle polynomials $\Gamma_p$ and $\Gamma_q$, where  $n = p + q - 2$ and $p,q\ge 3$. 

\begin{figure}
\centering
\includegraphics[height = 4cm]{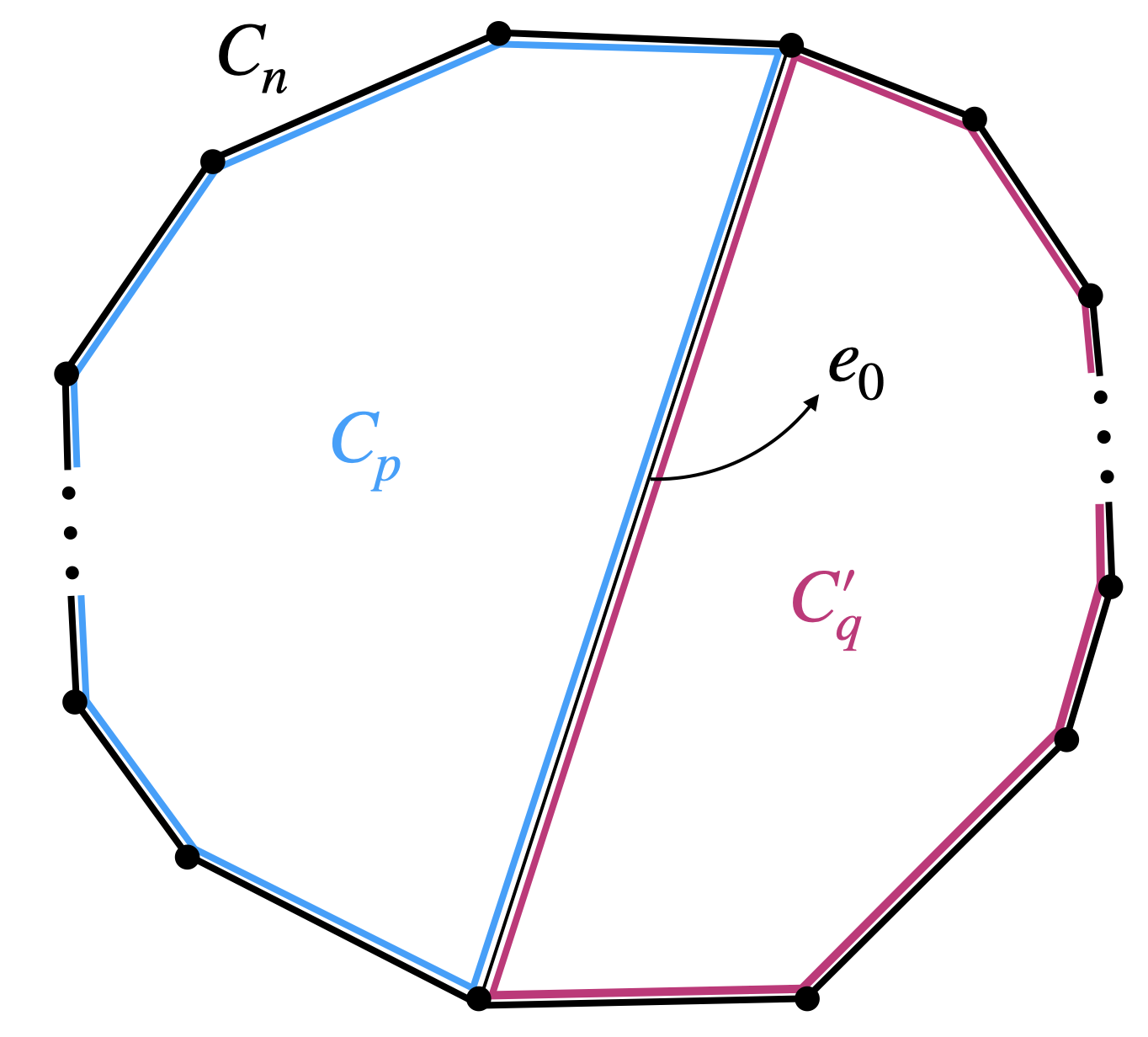}
\caption{The cycle $C_n$ is obtained by deleting the common edge $e_0$ from $C_p$ and $C_q'$.}
\label{fig:pandq}
\end{figure}

The construction goes as follows. Consider the $n$-th cycle $C_n=(1,\ldots,n)$ for $n\ge 4$.
For $3\le p\le n-1$, consider the graph $C_n+e_0$ obtained by adding the edge $e_0=\{1,p\}$ to $C_n$, so that  $e_0$ is a chord of $C_n$ in this graph. Then, consider the cycles $C_p=(1,2,\ldots,p)$ and 
$C'_q=(p, p+1,\ldots, n,1)$ with respective lengths $p$ and $q=n+2-p$.  The graphical picture to keep in mind is shown in Figure \ref{fig:pandq}: by adding a chord $e_0$ to the cycle $C_n$, one gets two smaller cycles $C_p$ and $C_q'$ that share this chord as a unique common edge.
In Figure~\ref{fig:cycle+cord} below, we also see an illustration for $C_4$, where $C_4+e_0 $ is the union of two 3-cycles, and for $C_5$, where $C_5+e_0$ is the union of a 3-cycle and a 4-cycle. 
For convenience, set $F_p=\{\{1,2\},\ldots,\{p-1,p\}\}$, so that the edge set of $C_p$ is $E(C_p)=F_p\cup\{e_0\}$, and set
$F'_q=\{\{p,p+1\}, \ldots, \{n-1,n\},\{n,1\}\}$, so that the edge set of $C_q'$ is $E(C_q')=F'_q\cup \{e_0\}$. 
Then, the cycle polynomials $\Gamma_n=\Gamma_{C_n},\Gamma_{C_p},\Gamma_{C'_q}$ belong to $\R[x_e: e\in E_n\cup\{e_0\}]$, depending on the variables $\{x_e : e\in E_n\}$, $\{x_e: e\in F_p\cup \{e_0\}\}$, and $\{x_e: e\in F_q'\cup\{e_o\}\}$, respectively. We similarly define the polynomials $h_n = h_{C_n}$, $h_{C_p}$ and~$h_{C'_q}$.

The construction of the cycle polynomial $\Gamma_n$ from $\Gamma_{C_p}$ and $\Gamma_{C'_q}$ mimics the geometric observation that the elliptope $\ME(C_n)$ can be obtained from the elliptopes $\ME(C_p)$ and $\ME(C'_q)$ by projecting out the variable corresponding to the chord $e_0$.
On the algebraic side, we will  show that $\Gamma_n$ is  obtained by eliminating $x_{e_0}$ from the equations $\Gamma_{C_p} = \Gamma_{C_q'} = 0$. We make this precise using \emph{resultants}. The theory we need and more can be  found in \cite[Chapter 3, \S 1]{CoxLittleOShea2005UsingAG}. We introduce our notation and recall some facts.

\medskip
For two univariate polynomials  $f = a_0 + a_1 \, y + \cdots + a_{d_f} \, y^{d_f}$ and $g = b_0 + b_1 \, y + \cdots + b_{d_g} \, y^{d_g}$ in $K[y]$, with coefficients $a_k, b_\ell$ in a field $K$ and respective degrees $d_f$ and $d_g$, their  \emph{Sylvester matrix} ${\rm Syl}_y(f,g)$ is the $(d_f+d_g) \times (d_f + d_g)$~matrix 
\[
\mathrm{Syl}_y(f,g)= \begin{pmatrix}
S_f \\ S_g
\end{pmatrix}, \quad \text{where} \quad S_f \, = \, 
\begin{pmatrix}
a_0 & a_1 & \cdots & a_{d_f} &        &        \\
    & a_0 & a_1    & \cdots & a_{d_f} &        \\
    &     & \ddots &        &         & \ddots \\
    &     &        & a_0    & a_1     & \cdots & a_{d_f} 
    \end{pmatrix} \, \in \, K^{d_g \times (d_f+d_g)}
\]
and $S_g$ is constructed analogously. The  resultant ${\rm Res}_y(f,g)$ is defined as the determinant of the Sylvester matrix:
\begin{align}\label{eq:resultant}
{\rm Res}_y(f,g) =\det {\rm Syl}_y(f,g).
\end{align}
 In particular, ${\rm Res}_y(f,g)$ is a polynomial in the coefficients $a_i, b_j$ with integer coefficients.
 An alternative formula for the resultant, 
called \emph{Poisson's formula}, will be useful. 
Let $r_1, \ldots, r_{d_f}\in \bar{K}$ be the roots of $f \in K[y]$ (listed with the appropriate multiplicity) in the algebraic closure  $\bar{K}$   of $K$.
The  resultant ${\rm Res}_y(f,g)$ in (\ref{eq:resultant}) can be equivalently expressed as 
\begin{equation} \label{eq:Poisson}
{\rm Res}_y(f,g) \, = \, a_{d_f}^{d_g} \, \prod_{i = 1}^{d_f} g(r_i).
\end{equation}
This highlights a crucial property of the resultant: it vanishes precisely when $f$ and $g$ have a common root. 
 Using (\ref{eq:resultant}) and (\ref{eq:Poisson}), one can easily check the following properties of the resultant: for $f_1,f_2,f,g\in K[y]$ and $\lambda\in K$,
 \begin{align}
{\rm Res}_y(f_1f_2,g) = {\rm Res}_y(f_1,g){\rm Res}_y(f_2,g), \label{eq:Poisson1}\\
{\rm Res}_y(g,f) = (-1)^{d_fd_g} {\rm Res}_y(f,g), \label{eq:Poisson2} \\
{\rm Res}_y(\lambda \, f, g) = \lambda^{d_g} {\rm Res}_y(f,g).\label{eq:Poisson3}
 \end{align}
Here, we will use the resultant  for $(f,g) = (\Gamma_{C_p}, \Gamma_{C'_q})$ and  $(h_{C_p}, h_{C_q'})$, $y = x_{e_0}$, 
and $K$ is the field of rational functions 
in the variables $(x_e \, : \, e \in E_n)$. Our main result is: 

\begin{theorem} \label{thm:resultant}
${\rm Res}_{x_{e_0}}(\Gamma_{C_p}, \Gamma_{C_q'}) =  \Gamma_n$ for any $n=p+q-2$ with $p,q\ge 3$.
\end{theorem}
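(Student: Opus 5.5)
We compute the resultant with Poisson's formula \eqref{eq:Poisson} and the explicit root description of Corollary~\ref{cor:univleadingterm}, which works over the algebraic closure of $K=\C(x_e: e\in E_n)$. By Corollary~\ref{cor:univleadingterm} applied to $C_p$ with distinguished edge $e_0$ and some auxiliary edge $e_1\in F_p$, $\Gamma_{C_p}$ is monic in $x_{e_0}$ of degree $2^{p-2}$. Its roots are
\[
r_\delta=\tfrac12\bigl(z_{0,\delta}+z_{0,-\delta}\bigr),\qquad \delta\in\{\pm1\}^{F_p},\ \delta_{e_1}=1 .
\]
Since the leading coefficient is $1$, Poisson's formula gives
\[
{\rm Res}_{x_{e_0}}(\Gamma_{C_p},\Gamma_{C'_q})=\prod_{\delta\in\Delta}\Gamma_{C'_q}(r_\delta,x_{F'_q}),
\]
where $\Delta=\{\delta\in\{\pm1\}^{F_p}:\delta_{e_1}=1\}$.

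The plan is to pass to squares and work with $h$ instead of $\Gamma$, so that the sign bookkeeping is symmetric. By Definition~\ref{def:cyclepolynomial}, $\Gamma_{C'_q}^2=2^{-2^{q-1}}h_{C'_q}$. We evaluate $h_{C'_q}$ at $x_{e_0}=r_\delta$ using \eqref{eq:hnz}, which gives
\[
h_{C'_q}(r_\delta,x_{F'_q})=\prod_{\epsilon_0,\epsilon'}\Bigl[\bigl(r_\delta+\epsilon_0\sqrt{r_\delta^2-1}\bigr)\,z_{\epsilon'}-1\Bigr],
\]
with $\epsilon_0\in\{\pm1\}$ and $\epsilon'\in\{\pm1\}^{F'_q}$. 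By Lemma~\ref{lem:z}, $r_\delta+\epsilon_0\sqrt{r_\delta^2-1}=z_{0,\epsilon_0\eta_\delta\delta}$. As $\epsilon_0$ ranges over $\pm1$, this factor therefore ranges over $z_{0,\delta}$ and $z_{0,-\delta}$, and the choice of branch $\eta_\delta$ drops out. Hence
\[
h_{C'_q}(r_\delta,x_{F'_q})=\prod_{\epsilon'}\bigl(z_{\delta,\epsilon'}-1\bigr)\bigl(z_{-\delta,\epsilon'}-1\bigr),
\]
where $z_{\pm\delta,\epsilon'}$ is the $z$-function of $C_n$, because $E_n=F_p\sqcup F'_q$. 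Taking the product over $\delta\in\Delta$, the pairs $\{\delta,-\delta\}$ exhaust $\{\pm1\}^{F_p}$, so
\[
\prod_{\delta\in\Delta}h_{C'_q}(r_\delta,x_{F'_q})=\prod_{\epsilon\in\{\pm1\}^{E_n}}(z_\epsilon-1)=h_n .
\]

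For the constants, we have $|\Delta|=2^{p-2}$ and
\[
2^{-2^{q-1}\cdot 2^{p-2}}=2^{-2^{p+q-3}}=2^{-2^{n-1}},
\]
so ${\rm Res}^2=2^{-2^{n-1}}h_n=\Gamma_n^2$. Thus ${\rm Res}_{x_{e_0}}(\Gamma_{C_p},\Gamma_{C'_q})=\pm\Gamma_n$.

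The main obstacle is fixing this sign. Squaring loses it, and Lemma~\ref{lem:z} involves the unspecified branches $\eta_\delta$, so the sign cannot be read off from the computation above. I would fix it by comparing leading coefficients in a single variable $x_f$ with $f\in F'_q$. The resultant is a polynomial (the Sylvester determinant), so this comparison is meaningful. Since $\Gamma_{C'_q}$ is monic in $x_f$ of degree $2^{q-2}$, and by Proposition~\ref{prop:bivariatedegree} has total degree $2^{q-2}$ in the pair $(x_{e_0},x_f)$, each factor $\Gamma_{C'_q}(r_\delta,x_{F'_q})$ is monic in $x_f$ of degree $2^{q-2}$. The bivariate degree bound is what guarantees that substituting $r_\delta$ does not raise the $x_f$-degree. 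The product over $\Delta$ is then monic in $x_f$ of degree $2^{n-2}$, matching $\Gamma_n$ by Corollary~\ref{cor:univleadingterm}. Hence the sign is $+$.

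Formally, the product $\prod_\delta\Gamma_{C'_q}(r_\delta,\cdot)$ lives over $\overline{K'}$ with $K'=\C(x_e:e\in F_p)$, viewed as a polynomial in $x_{F'_q}$. Its leading $x_f$-coefficient is $\prod_\delta 1=1$, and this closes the argument.
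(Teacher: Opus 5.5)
Your proof is correct, and it takes a somewhat different route from the paper's.

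\textbf{How the paper argues.} It first proves Proposition~\ref{prop:Resh}, which gives ${\rm Res}_{x_{e_0}}(h_{C_p},h_{C'_q})=2^{2^n}h_n^2$, by running Poisson's formula over all $2^{p-1}$ roots of $h_{C_p}$ (each distinct root counted twice). It then transfers this to the cycle polynomials using the rules \eqref{eq:Poisson1}--\eqref{eq:Poisson3}, which yields ${\rm Res}(\Gamma_{C_p},\Gamma_{C'_q})^4=\Gamma_n^4$. The remaining constant is fixed by expanding the Sylvester determinant. There it isolates the unique top-degree term, the diagonal product of the $a_0$'s and $b_{d_q}$'s, in a variable $x_{e_1}$, $e_1\in F_p$, of the \emph{first} polynomial.

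\textbf{What your route does differently.} You apply Poisson's formula directly to the monic $\Gamma_{C_p}$, with its $2^{p-2}$ roots from Corollary~\ref{cor:univleadingterm}, and square only the evaluated factors $\Gamma_{C'_q}(r_\delta)$. This gives ${\rm Res}^2=\Gamma_n^2$ in one step. It bypasses Proposition~\ref{prop:Resh} and the resultant calculus, and leaves only a sign to determine rather than a fourth root of unity.

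Your sign argument is also lighter. You use a variable $x_f$ of the \emph{second} polynomial, and the Poisson product is visibly monic in $x_f$, since the roots $r_\delta\in\overline{\C(x_{F_p})}$ do not involve $x_f$. So no permutation bookkeeping in the Sylvester matrix is needed. The price is that you must regard the product as an element of $\overline{\C(x_{F_p})}[x_{F'_q}]\supseteq\C[x_{E_n}]$, where $x_f$-degrees and leading coefficients are compatible. You note this, and it is fine.

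\textbf{What each approach buys.} The paper's route produces Proposition~\ref{prop:Resh} as a standalone identity. It also keeps the sign argument at the level of the Sylvester matrix with polynomial entries. Yours is shorter and more self-contained.

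\textbf{A small correction.} Proposition~\ref{prop:bivariatedegree} is not what keeps the $x_f$-degree from rising. Because $\Gamma_{C'_q}$ is monic in $x_f$, the coefficient of $x_f^{2^{q-2}}$ is the constant $1$, independent of $x_{e_0}$. Substituting $x_{e_0}=r_\delta$, which is free of $x_f$, therefore cannot change the $x_f$-degree or the leading coefficient. Citing the bivariate bound is harmless but superfluous.
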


Our strategy to prove Theorem \ref{thm:resultant} is to first compute the resultant of the polynomials $h_{C_p}$ and $h_{C_q'}$ and then to apply properties (\ref{eq:Poisson1})-(\ref{eq:Poisson3}) to obtain the resultant of $\Gamma_{C_p}$ and $\Gamma_{C_q'}$. First, we show that the resultant of $h_{C_p}$ and $h_{C_q'}$ is a scaling of $h_n^2$. 

\begin{proposition} \label{prop:Resh}
With the above notation, consider the polynomials $h_n \in \mathbb{C}[x_e  :  e \in E_n]$, $h_{C_p} \in \mathbb{C}[x_e : e \in F_p\cup\{e_0\}]$, and $h_{C'_q} \in \mathbb{C}[x_e  :  e \in F_q'\cup\{e_0\}]$,  where $E_n=F_p\cup F'_q$. 
We have the identity: ${\rm Res}_{x_{e_0}}(h_{C_p}, h_{C'_q}) = (2^{2^{n-1}} \, h_n)^2$. 
\end{proposition}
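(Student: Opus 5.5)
The plan is to compute the resultant with Poisson's formula \eqref{eq:Poisson}, taking $f=h_{C_p}$ and $g=h_{C'_q}$ as univariate polynomials in $y=x_{e_0}$ over $K=\C(x_e: e\in E_n)$. Proposition \ref{prop:univleadingterm}, applied to the cycle $C_p$ with distinguished edge $e_0$, gives the following. The polynomial $h_{C_p}$ has degree $2^{p-1}$ in $x_{e_0}$ and leading coefficient $2^{2^{p-1}}$. By the first factorization in \eqref{eq:hnroot}, its roots, listed with multiplicity, are $r_\delta=\frac12(z_{0,\delta}+z_{0,-\delta})$ for $\delta\in\{\pm1\}^{F_p}$, with each value appearing once for $\delta$ and once for $-\delta$. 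Applying the same proposition to $C'_q$ shows that $h_{C'_q}$ has degree $d_g=2^{q-1}$ in $x_{e_0}$. Poisson's formula then gives
\[
{\rm Res}_{x_{e_0}}(h_{C_p},h_{C'_q}) \,=\, \big(2^{2^{p-1}}\big)^{2^{q-1}}\prod_{\delta\in\{\pm1\}^{F_p}} h_{C'_q}\big|_{x_{e_0}=r_\delta}.
\]
The constant is $2^{2^{p+q-2}}=2^{2^n}=(2^{2^{n-1}})^2$, which is exactly the constant claimed. It therefore remains to show that the product equals $h_n^2$.

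To evaluate $h_{C'_q}$ at $x_{e_0}=r_\delta$, I use the form \eqref{eq:hnz}, splitting the sign vector as $(\epsilon_0,\delta')$ with $\delta'\in\{\pm1\}^{F'_q}$:
\[
h_{C'_q}=\prod_{\delta'\in\{\pm1\}^{F'_q}}\ \prod_{\epsilon_0\in\{\pm1\}}\Big(\big(x_{e_0}+\epsilon_0\sqrt{x_{e_0}^2-1}\big)\, z_{0,\delta'}-1\Big).
\]
This is an identity of algebraic functions, and it is symmetric under the choice of square-root branch because the inner product runs over both signs $\epsilon_0$. So I may substitute the algebraic element $r_\delta$ for $x_{e_0}$. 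Lemma \ref{lem:z} gives $r_\delta+\epsilon_0\sqrt{r_\delta^2-1}=z_{0,\epsilon_0\eta_\delta\delta}$, where the right-hand side is computed over the edges of $F_p$. As $\epsilon_0$ ranges over $\{\pm1\}$, the vector $\epsilon_0\eta_\delta\delta$ ranges over $\{\delta,-\delta\}$, so the unknown sign $\eta_\delta$ plays no role. Using $z_{\delta,\azero}z_{\azero,\delta'}=z_{\delta,\delta'}$ on $E_n=F_p\cup F'_q$, I obtain
\[
h_{C'_q}\big|_{x_{e_0}=r_\delta}=\prod_{\delta'\in\{\pm1\}^{F'_q}}(z_{\delta,\delta'}-1)(z_{-\delta,\delta'}-1).
\]

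Taking the product over all $\delta\in\{\pm1\}^{F_p}$, every sign vector $\epsilon=(\delta,\delta')\in\{\pm1\}^{E_n}$ appears exactly twice: once from the index $\delta$ and once from the index $-\delta$. By \eqref{eq:hnz} for $C_n$, the product is therefore $h_n^2$, which proves the identity.

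The step I expect to need the most care is the substitution of the algebraic root $r_\delta$ into $h_{C'_q}$. This requires the branch-independence remark above together with Lemma \ref{lem:z}, and it requires tracking multiplicities correctly: the doubling of the roots $r_\delta=r_{-\delta}$ must match the doubling in $h_n^2$, with no extra factors left over.
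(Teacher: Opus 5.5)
Your proof is correct and follows the paper's argument. You apply Poisson's formula with the roots $r_\delta$ of $h_{C_p}$ from Proposition~\ref{prop:univleadingterm}, use Lemma~\ref{lem:z} to rewrite $h_{C'_q}$ at $x_{e_0}=r_\delta$ as $\prod_{\delta'}(z_{\delta,\delta'}-1)(z_{-\delta,\delta'}-1)$, and then observe that each $\epsilon\in\{\pm1\}^{E_n}$ occurs twice, which gives $2^{2^n}h_n^2$. Your remark that pairing the two signs $\epsilon_0$ makes each factor a polynomial in $x_{e_0}$, so that substituting $r_\delta$ does not depend on a branch choice, justifies a step the paper leaves implicit.
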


\begin{proof}
We compute the resultant using relation (\ref{eq:Poisson}) applied to $f= h_{C_p}$ and $g=h_{C'_q}$, viewed as univariate polynomials in $x_{e_0}$.
By Proposition \ref{prop:univleadingterm}, the roots of $h_{C_p}$ 
are given by 
$r_\delta={1\over 2}(z_{0,\delta,\azero}+z_{0,-\delta,\azero})$ for $ \delta \in \{-1,1\}^{F_p}.$
Here, the subscript of $z$ uses the   ordering of $\{e_0\} \cup E_n$ as $e_0, F_p, F_q'$. 
By Poisson's formula \eqref{eq:Poisson}, the resultant is given by the following~expression:
\[
{\rm Res}_{x_{e_0}}(h_{C_p}, h_{C_q'})= (2^{2^{p-1}})^{2^{q-1}} \, \prod_{\delta \in \{-1,1\}^{F_p}}  h_{C'_q}(r_\delta)=
2^{2^n} \prod_{\delta \in \{-1,1\}^{F_p}}  h_{C'_q}(r_\delta),  \]
where we used $p + q - 2 = n$. Using the definition (\ref{eq:hnz}) for the polynomial $h_{C_q'}$, we get
\begin{align*}
h_{C_q'}= \prod_{\delta'\in\{\pm 1\}^{F_q'}} \prod_{\epsilon_0\in \{\pm 1\}} (z_{\epsilon_0,\azero,\delta'}-1) = 
 \prod_{\delta'\in\{\pm 1\}^{F_q'}} \prod_{\epsilon_0\in \{\pm 1\}} (z_{\epsilon_0,\azero,\azero}\cdot z_{0,\azero,\delta'}-1).
 \end{align*}
By Lemma \ref{lem:z}, evaluating $z_{\epsilon_0,\azero,\azero}$ at $x_{e_0}=r_\delta$ gives:
\[
(z_{\epsilon_0,\azero,\azero})_{|x_{e_0} = r_\delta} \,=\, r_{\delta}+\epsilon_0\sqrt{r_\delta^2-1} \,= \,z_{0,\epsilon_0\cdot \eta_\delta\cdot  \delta,\azero}\ \text{   for some } \eta_\delta\in\{\pm 1\}.
\]
Then, viewing $h_{C_q'}$ as a polynomial in $x_{e_0}$ and evaluating it at $x_{e_0}=r_\delta$, 
   we obtain 
\begin{align*}
\prod_{\delta \in \{-1,1\}^{F_p}}  h_{C'_q}(r_\delta)& =
\prod_{\delta \in \{-1,1\}^{F_p}}  \prod_{\delta'\in\{\pm 1\}^{F_q'}}\prod_{\epsilon_0\in \{\pm 1\}}
(z_{0,\epsilon_0\cdot \eta_\delta\cdot  \delta,\azero}\cdot z_{0,\azero,\delta'}-1)\\
&=
\prod_{\delta \in \{-1,1\}^{F_p}}  \prod_{\delta'\in\{\pm 1\}^{F_q'}}
(z_{0,\delta,\delta'}-1)^2\\
&= \prod_{\epsilon\in \{\pm 1\}^{E_n}} (z_{0,\epsilon}-1)^2\\
&= h_n^2.
\end{align*}
Here,  we use  the identity
$\prod_\delta\prod_{\epsilon_0}(z_{0,\epsilon_0\cdot\eta_\delta \cdot \delta,\azero}\cdot z_{0,\azero,\delta'}-1)=\prod_\delta 
(z_{0,\delta,\azero}\cdot z_{0,\azero,\delta'}-1)^2$ for the second equality, combined with $z_{0,\delta,\azero}\cdot z_{0,\azero,\delta'}=z_{0,\delta,\delta'}$. 
The last equality follows using relation (\ref{eq:hn}) and the fact that $E_n=F_p\cup F_q'$.
So, we have proved that ${\rm Res}_{x_{e_0}}(h_{C_p}, h_{C_q'})=2^{2^{n}} h_n^2$, as desired.
\end{proof}

\begin{proof}[Proof of Theorem \ref{thm:resultant}]
Using  Definition \ref{def:cyclepolynomial} and  (\ref{eq:Poisson1})-(\ref{eq:Poisson3}), we have the chain of equalities: 
\[ {\rm Res}(h_{C_p}, h_{C_q'}) \, = \, {\rm Res}(2^{2^{p-1}} \Gamma_{C_p}^2, 2^{2^{q-1}} \Gamma_{C_q'}^2) \, = \, 2^{2^{n+1}} {\rm Res}(\Gamma_{C_p}^2, \Gamma_{C_q'}^2) \, = \, 2^{2^{n+1}} {\rm Res}(\Gamma_{C_p}, \Gamma_{C_q'})^4.
\]
where the resultant is always taken with respect to the common edge variable $x_{e_0}$. 
By Proposition \ref{prop:Resh} (combined with Definition \ref{def:cyclepolynomial}), 
 ${\rm Res}(h_{C_p}, h_{C_q'})=2^{2^{n}}h_n^2=2^{2^{n+1}} \Gamma_n^4$. This implies 
 ${\rm Res}(\Gamma_{C_p}, \Gamma_{C_q'})^4 = \Gamma_n^4$, and thus ${\rm Res}(\Gamma_{C_p}, \Gamma_{C_q'})= \alpha \Gamma_n$ for some $\alpha=\pm 1$. 
 Now, we argue that $\alpha=1$, i.e., $\Gamma_n= R_{p,q}$, setting $R_{p,q}={\rm Res}(\Gamma_{C_p}, \Gamma_{C_q'})$ for compact notation.
  
Let us write $\Gamma_{C_p}= a_0+ a_1x_{e_0}+\ldots+ a_{d_p}x_{e_0}^{d_p}$ and  $\Gamma_{C'_q}= b_0+b_1+\ldots+b_{d_q}x_{e_0}^{d_q}$, where $d_p=2^{p-2}$, $d_q=2^{q-2}$, the $a_i$'s involve the variables $x_e$ for $e\in F_p$, and the $b_i$'s involve the variables $x_e$ for $e\in F'_q$.
 Consider a variable $x_{e_1}$ with $e_1\in F_p$, so that $x_{e_1}$ occurs only in $\Gamma_{C_p}$ and not in $\Gamma_{C'_q}$.  
 By relation (\ref{eq:Gammauniv}), $\Gamma_n$ has leading coefficient 1 when viewed as univariate polynomial in  any given variable, and the same holds for $\Gamma_{C_p}$ and $\Gamma_{C'_q}$. Considering the variable $x_{e_0}$,  this implies  $a_{d_p}=b_{d_q}=1$. Moreover, by considering the variable $x_{e_1}$, this implies that the leading term in $x_{e_1}$ within $\Gamma_{C_p}$ occurs in the coefficient $a_0$ (the only coefficient without a factor $x_{e_0}$).
We now compute the leading coefficient of the resultant $R_{p,q}$, viewed as a polynomial in $x_{e_1}$. For this we use the formula for $R_{p,q}$ given by the determinant of the Sylvester matrix $\text{\rm Syl}_{x_{e_0}}(\Gamma_{C_p},\Gamma_{C'_q})$ as in (\ref{eq:resultant}). The term in the expansion of this determinant which has the highest degree in $x_{e_1}$ is obtained by picking all $d_q$ occurrences of $a_0$ on the nonzero lower diagonal of the submatrix $S_{\Gamma_{C_p}}$ and all $d_p$ occurrences of $b_{d_q}$ on the nonzero upper diagonal of $S_{\Gamma_{C'_q}}$. 
In other words, $x_{e_1}$ appears with highest degree in the term 
$(a_0)^{d_p}\cdot (b_{d_q})^{d_p}=(a_0)^{d_p}$, and thus its leading coefficient is positive. 
On the other hand, as $R_{p,q}=\alpha \Gamma_n$, the leading coefficient in $x_{e_1}$ is equal to $\alpha$. This implies $\alpha=1$, as desired.
   \end{proof}

\begin{example}\label{ex:Gamma4}
For $p = q = 3$,  $n = 4$,
the graph $C_4+e_0$ is obtained by gluing two $3$-cycles along the edge $e_0 = \{1,3\}$, see Figure \ref{fig:cycle+cord} (left). 
\begin{figure}[ht]
    \centering
    \noindent\makebox[\textwidth]{
    \begin{tikzpicture}[scale=1]

\begin{scope}
    \node[fill=black,circle,inner sep=2pt,label=below:1] (a1) at (0,0) {};
    \node[fill=black,circle,inner sep=2pt,label=below:2] (a2) at (2,0) {};
    \node[fill=black,circle,inner sep=2pt,label=above:3] (a3) at (2,2) {};
    \node[fill=black,circle,inner sep=2pt,label=above:4] (a4) at (0,2) {};

    \draw (a1)--(a2)--(a3)--(a4)--(a1);

    \draw (a1)--(a3);
\end{scope}

\begin{scope}[xshift=5cm]
    \node[fill=black,circle,inner sep=2pt,label=above:4] (b4) at (0,2) {};
    \node[fill=black,circle,inner sep=2pt,label=right:3] (b3) at (1.14,1.24) {};
    \node[fill=black,circle,inner sep=2pt,label=right:2] (b2) at (0.70,0) {};
    \node[fill=black,circle,inner sep=2pt,label=left:1] (b1) at (-0.70,0) {};
    \node[fill=black,circle,inner sep=2pt,label=left:5] (b5) at (-1.14,1.24) {};

    \draw (b1)--(b2)--(b3)--(b4)--(b5)--(b1);
    \draw (b1)--(b3);
\end{scope}
\end{tikzpicture}
    }
    \caption{With $e_0=\{1,3\}$, the graph $C_4+e_0$ is the union of $C_3=(1,2,3)$ and $C_3'=(3,4,1)$, while $C_5+e_0$ is the union of $C_3=(1,2,3)$ and $C_4'=(3,4,5,1)$.}
    \label{fig:cycle+cord}
    \end{figure}
    Theorem \ref{thm:resultant} gives the following way to 
    obtain  $\Gamma_4$ as a $4 \times 4$ Sylvester determinant, using $\Gamma_3$ (which was computed in Example \ref{ex:Gamma3}):
    \[ \Gamma_4 \, = \, \det \begin{pmatrix}
    -1 + x_{12}^2 + x_{23}^2 & -2 x_{12}x_{23} & 1 & 0 \\ 
    0 & -1 + x_{12}^2 + x_{23}^2 & -2 x_{12}x_{23} & 1 \\ 
    -1 + x_{14}^2 + x_{34}^2 & -2 x_{14}x_{34} & 1 & 0 \\ 
    0 &  -1 + x_{14}^2 + x_{34}^2 & -2 x_{14}x_{34} & 1
    \end{pmatrix} .\]
    Similarly, for $p=3$, $q=4$, $n=5$, $C_5+e_0$ is obtained by gluing a 3-cycle and a 4-cycle along the edge $e_0=\{1,3\}$, as shown in Figure \ref{fig:cycle+cord} (right). By Theorem \ref{thm:resultant},     the polynomial $\Gamma_5$ can be obtained from $\Gamma_3$ and $\Gamma_4$  as a $6 \times 6$ Sylvester determinant.
\end{example}

\begin{remark}\label{rem:newproof}
Observe that the result in Theorem \ref{thm:resultant} offers a new proof of the fact that $\Gamma_n$ is a multivariate polynomial, as we already established in Theorem \ref{lem:existence}.
Indeed, its proof relies only on the fact that $\Gamma_{C_p}$ and $\Gamma_{C_q'}$ are {\em univariate polynomials} in the variable $x_{e_0}$. 
Then, using the definition (\ref{eq:resultant})  of the resultant via the Sylvester matrix, the identity $\Gamma_n=\text{\rm Res}_{x_{e_0}}(\Gamma_{C_p},\Gamma_{C'_q})$
shows by induction that $\Gamma_n$ is a {\em multivariate polynomial}. The base case is for $\Gamma_3$, which is indeed a multivariate polynomial as we saw in Example \ref{ex:Gamma3}. In addition, the same argument shows that $\Gamma_n$ is a multivariate polynomial with {\em integer coefficients} for all $n \geq 3$.
\end{remark}

\begin{remark} \label{rem:newproofnew}
Using the definition (\ref{eq:resultant})  of the resultant via the Sylvester matrix, the identity $\Gamma_n=\text{\rm Res}_{x_{e_0}}(\Gamma_{C_p},\Gamma_{C'_q})$
shows by induction that $\Gamma_n$ is a multivariate polynomial with \emph{integer coefficients}. The base case is for $\Gamma_3$, which satisfies this property by Example \ref{ex:Gamma3}. Moreover, one can use $\Gamma_n=\text{\rm Res}_{x_{e_0}}(\Gamma_{C_p},\Gamma_{C'_q})$ as an alternative \emph{definition} of $\Gamma_n$, avoiding the use of square~roots as is now done in Definition \ref{def:cyclepolynomial}. 
\end{remark}

Via Theorem \ref{thm:resultant}, Sylvester's formula gives several determinantal expressions for $\Gamma_n$, one for each pair $(p,q)$ such that $p+q=n+2$ and $p,q\ge 3$, in which case 
$\Gamma_n$ is the determinant of a Sylvester matrix of size
$2^{p-2}+2^{q-2}$.  
One can  check that the smallest size is equal to $\sqrt {2^n}$, obtained with $p=q={n\over 2}+1$ if $n$ is even, and equal to ${3\over 2}\sqrt {2^{n-1}}$, obtained with $p={n+1\over 2}$, $q={n+3\over 2}$  if $n$ is odd.
In comparison, the determinantal represention of $\Gamma_n$ obtained from the multiplication matrix $M_{g_n}$  in (\ref{eq:hn}), involves a matrix of size $2^n$, and its determinant is $h_n = 2^{2^{n-1}} \Gamma_n^2$. On the other hand, that $2^n \times 2^n$-matrix has significantly simpler entries. The preferred representation may depend on the precise application. Compact representations and efficient evaluation of $\Gamma_n$ form an interesting topic for further research. 

\subsection{Homogeneous cycle polynomials}\label{sec:cyclehomogeneous}

Recall that the elliptope ${\cal E}(G)$ consists of partially filled correlation matrices which allow a positive semidefinite completion. Replacing \emph{correlation matrices} by \emph{semidefinite matrices} in this definition gives the cone ${\MCscr}(G)$. 
We think of a partially filled  matrix as a point  $s\in \mathbb{R}^{E\cup V}=\R^E\times \R^V$, 
where   the coordinates on the first factor are $s_{ij}$, indexed by the edges $\{i,j\}\in E$, and 
the coordinates on the second factor are $s_{kk}$ for $k\in V$. 
Define
\begin{align}\label{eq:coneCG}
 \MCscr(G) \, = \, \{ s \in \mathbb{R}^{E\cup V}  :  \exists X \in {\cal S}^n_+ \text{ such that } 
X_{ij}=s_{ij} \text{ for } \{i,j\}\in E,\ X_{kk}=s_{kk}\text{ for } k\in V\}.\end{align}
A point $s\in\R^{E\cup V}$ lies in $\MCscr (G)$ if and only if the point $x\in \R^E$ with entries 
\begin{equation} \label{eq:substitution} 
x_{ij} \, = \, \frac{s_{ij}}{\sqrt{s_{ii}s_{jj}}}  \ \text{ for } \{i,j\}\in E
\end{equation}
lies in ${\cal E}(G)$  (setting $x_{ij}=0$ if $s_{ii}s_{jj}=0$). For this reason, the authors of \cite{Sturmfels2010MultivariateGaussians} are interested in a homogeneous version of the cycle polynomial expressed in the variables $s_{ij}, s_{kk}$. In this section, we prove Conjecture~4.9 in \cite{Sturmfels2010MultivariateGaussians}, which pertains to the degree of that homogeneous polynomial. To define it, we substitute \eqref{eq:substitution} in $\Gamma_n$. The resulting function of the $s$-variables is denoted by $R_n$. 

\begin{proposition} \label{prop:Rrational}
    The function $R_n$, obtained by substituting $x_{ij} = \tfrac{s_{ij}}{\sqrt{s_{ii}s_{jj}}}$ in the cycle polynomial $\Gamma_n$, is a rational function in the $s$-variables. Its denominator is $s_{11}^{2^{n-3}} \cdots s_{nn}^{2^{n-3}}$.
\end{proposition}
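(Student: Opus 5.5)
The plan is to show two things. First, after substitution only even total powers of each $\sqrt{s_{kk}}$ survive in every monomial, so $R_n$ is rational. Second, the maximal power of $s_{kk}^{-1}$ that occurs is exactly $2^{n-3}$ for every vertex $k$.

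Write $\Gamma_n=\sum_\alpha c_\alpha x^\alpha$ with $\alpha\in\N^{E_n}$. The monomial $x^\alpha$ becomes
\[
s^\alpha\prod_{k\in V} s_{kk}^{-(\alpha_{e_k}+\alpha_{e_k'})/2},
\]
where $e_k,e_k'$ are the two edges of $C_n$ incident to vertex $k$. Rationality therefore amounts to showing that $\alpha_{e_k}+\alpha_{e_k'}$ is even whenever $c_\alpha\neq 0$. This follows directly from Corollary~\ref{cor:even} applied to the even-cardinality set $E'=\{e_k,e_k'\}$. So $R_n$ is a polynomial in the $s_{ij}$ and the $s_{kk}^{-1}$, and it is a rational function.

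For the denominator, the exponent of $s_{kk}^{-1}$ contributed by $x^\alpha$ is $(\alpha_{e_k}+\alpha_{e_k'})/2$. After clearing, the denominator is $\prod_k s_{kk}^{m_k}$, where
\[
m_k=\tfrac12\max\{\alpha_{e_k}+\alpha_{e_k'}: c_\alpha\neq 0\}
\]
is half the total degree of $\Gamma_n$ in the two variables $x_{e_k},x_{e_k'}$. Distinct monomials $x^\alpha$ map to distinct monomials in $s_{ij},s_{kk}$, because the $s_{ij}$-part records $\alpha$, so no cancellation can occur. Hence the exact denominator exponent of $s_{kk}$ is $m_k$. Proposition~\ref{prop:bivariatedegree} says this bivariate degree is $2^{n-2}$ for any two distinct edges, so $m_k=2^{n-3}$ for every $k$.

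One subtlety is whether a numerator monomial could cancel a factor $s_{kk}$ of the denominator. It cannot, because no positive power of $s_{kk}$ appears in the numerator: the numerator exponent of $s_{kk}$ in the term coming from $x^\alpha$ is $m_k-(\alpha_{e_k}+\alpha_{e_k'})/2\ge 0$. The terms attaining the maximum have exponent $0$, so the numerator is not divisible by $s_{kk}$, and the fraction is in lowest terms.

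The main point is the exact degree count. The upper bound alone would follow easily, so the real content is already in Proposition~\ref{prop:bivariatedegree}, specifically that the bivariate degree is attained. The remaining care is bookkeeping: the numerator of $R_n$ is $\sum_\alpha c_\alpha s^\alpha\prod_k s_{kk}^{m_k-(\alpha_{e_k}+\alpha_{e_k'})/2}$, and the fraction is reduced.
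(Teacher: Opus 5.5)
Your proposal is correct and follows the paper's argument. The paper likewise applies Corollary~\ref{cor:even} to the two edges at each vertex to get integer exponents of $s_{kk}$, and then uses Proposition~\ref{prop:bivariatedegree} to get the exponent $2^{n-3}$. Your additional observation, that distinct monomials $x^\alpha$ give distinct monomials in $s$ so the fraction is in lowest terms, makes explicit a point the paper leaves implicit.
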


\begin{proof}
    Notice that $R_n$ is a fraction whose numerator is a polynomial in $s$, and whose denominator is a product of powers of $s_{11}, \ldots, s_{nn}$. A priori, these are fractional powers. We must show that each of the variables $s_{11}, \ldots, s_{nn}$ appears in the denominator with an integer exponent. Fix $i \in [n]$ and let $x_{i-1,i}$ and $ x_{i,i+1}$ be the edge variables of the two edges containing $i$ (taking indices modulo $n$). We set $E' = \{\{i-1,i\},\{i,i+1\}\}$ and we expand $\Gamma_n$ as in relation (\ref{eq:cycle-even}):
    \[ \Gamma_n \, = \, \sum_{(\alpha,\beta) \in \mathbb{N}^2} c_{\alpha,\beta}(x_{E \setminus E'}) \, x_{i-1,i}^\alpha \, x_{i,i+1}^\beta .\]
    The substitution $x_{ij} = \frac{s_{ij}}{\sqrt{s_{ii}s_{jj}}}$ leads to the expression 
    \[ \sum_{(\alpha,\beta) \in \mathbb{N}^2} \tilde{c}_{\alpha,\beta} \, 
    \frac{s_{i-1,i}^\alpha \, s_{i,i+1}^\beta}
    { (s_{i-1,i-1})^{\alpha\over 2} (s_{i+1,i+1})^{\beta\over 2} (s_{ii})^{\alpha+\beta\over 2} }
    \]
    where $\tilde{c}_{\alpha, \beta}$ is obtained from $c_{\alpha,\beta}$ after the substitution. Note that $\tilde{c}_{\alpha, \beta}$ does not involve the variable $s_{ii}$. 
    Moreover, by Corollary \ref{cor:even}, each $\alpha + \beta$ for which $\tilde{c}_{\alpha, \beta} \neq 0$ is an even integer. Therefore, $s_{ii}$ appears with an integer exponent in each term for all $i$. By Proposition \ref{prop:bivariatedegree}, $\Gamma_n$ has degree $2^{n-2}$ when viewed as a bivariate polynomial in $x_{i-1,i}$ and $ x_{i,i+1}$. Hence,  the exponent of $s_{ii}$ in the common denominator is $2^{n-3}$. 
\end{proof}

\begin{example}
For $n = 4$, the rational function $R_4$ is given by 
\[\begin{matrix} s_{12}^2 \, s_{23}^2 \, s_{34}^2 \, s_{14}^2 \cdot R_4 \, = \, s_{14}^4 s_{22}^2 s_{33}^2 - 4 s_{12} s_{14}^3 s_{22} s_{23} s_{33} s_{34} + 
 4 s_{11} s_{14}^2 s_{22} s_{23}^2 s_{34}^2 + 4 s_{12}^2 s_{14}^2 s_{22} s_{33} s_{34}^2 \\- 
 2 s_{11} s_{14}^2 s_{22}^2 s_{33} s_{34}^2 - 4 s_{11} s_{12} s_{14} s_{22} s_{23} s_{34}^3 + 
 s_{11}^2 s_{22}^2 s_{34}^4 + 4 s_{12}^2 s_{14}^2 s_{23}^2 s_{33} s_{44} \\ - 
 2 s_{11} s_{14}^2 s_{22} s_{23}^2 s_{33} s_{44} - 2 s_{12}^2 s_{14}^2 s_{22} s_{33}^2 s_{44} - 
 4 s_{11} s_{12} s_{14} s_{23}^3 s_{34} s_{44} - 4 s_{12}^3 s_{14} s_{23} s_{33} s_{34} s_{44} \\+ 
 8 s_{11} s_{12} s_{14} s_{22} s_{23} s_{33} s_{34} s_{44} + 4 s_{11} s_{12}^2 s_{23}^2 s_{34}^2 s_{44} - 
 2 s_{11}^2 s_{22} s_{23}^2 s_{34}^2 s_{44} - 2 s_{11} s_{12}^2 s_{22} s_{33} s_{34}^2 s_{44} \\ + 
 s_{11}^2 s_{23}^4 s_{44}^2 - 2 s_{11} s_{12}^2 s_{23}^2 s_{33} s_{44}^2 + s_{12}^4 s_{33}^2 s_{44}^2. \end{matrix} \]
 Each term on the righthand side corresponds to a term of $\Gamma_4$, see Example \ref{ex:smallGamma}.
\end{example}

\begin{definition}\label{def:hGamma}
    The \emph{homogeneous cycle polynomial} is $\Gamma_n^h = s_{11}^{2^{n-3}} \cdots s_{nn}^{2^{n-3}} \cdot R_n$. 
\end{definition}

We note that our notation differs slightly from \cite{Sturmfels2010MultivariateGaussians}, where the cycle polynomial $\Gamma_n$ and the homogeneous cycle polynomial $\Gamma_n^h$ are denoted by $\Gamma_n'$ and $\Gamma_n$ respectively. 
The following theorem settles Conjecture 4.9 in \cite{Sturmfels2010MultivariateGaussians}. 

\begin{theorem} \label{thm:degGammah}
  The polynomial $\Gamma_n^h$  is homogeneous and its degree is  $n \cdot 2^{n-3}$.
\end{theorem}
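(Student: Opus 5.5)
The plan is to get homogeneity and the degree together from a scaling argument applied term by term, using Proposition~\ref{prop:Rrational} to know that $\Gamma_n^h$ is a polynomial.

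First I check that the substitution \eqref{eq:substitution} gives functions of degree $0$. Replacing $s$ by $\lambda s$ for $\lambda\in\C^*$ multiplies $s_{ij}$ by $\lambda$ and $\sqrt{s_{ii}s_{jj}}$ by $\lambda$, so each $x_{ij}$ is unchanged. To make this concrete and avoid square-root branches, I expand
\[
\Gamma_n=\sum_{\alpha} c_\alpha\, x^\alpha,\qquad \alpha\in\N^{E_n}.
\]
Each monomial $x^\alpha$ becomes
\[
\frac{\prod_{\{i,j\}\in E_n} s_{ij}^{\alpha_{ij}}}{\prod_{k\in V} s_{kk}^{(\alpha_{k-1,k}+\alpha_{k,k+1})/2}}.
\]
The numerator has total degree $|\alpha|$. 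The denominator has total degree $\tfrac12\sum_k(\alpha_{k-1,k}+\alpha_{k,k+1})=|\alpha|$, because every edge meets exactly two vertices. So every term of $R_n$ is a (possibly fractional-exponent) Laurent monomial of total degree $0$.

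Next I multiply by $D=\prod_{k}s_{kk}^{2^{n-3}}$, which has degree $n\cdot 2^{n-3}$. Each term of $R_n$ becomes a Laurent monomial of total degree $n\cdot 2^{n-3}$. By Proposition~\ref{prop:Rrational}, these are genuine monomials with nonnegative integer exponents. Concretely, for each vertex $k$ the exponent of $s_{kk}$ is
\[
2^{n-3}-\tfrac12\bigl(\alpha_{k-1,k}+\alpha_{k,k+1}\bigr).
\]
This is an integer because the sum $\alpha_{k-1,k}+\alpha_{k,k+1}$ is even (Corollary~\ref{cor:even} with $E'=\{\{k-1,k\},\{k,k+1\}\}$). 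It is nonnegative because that sum is at most $2^{n-2}$ (Proposition~\ref{prop:bivariatedegree}).

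Distinct $\alpha$ give distinct $s$-monomials, since the off-diagonal exponents recover $\alpha$. Hence there is no cancellation, and $\Gamma_n^h$ is a nonzero polynomial each of whose terms has degree exactly $n\cdot 2^{n-3}$. So $\Gamma_n^h$ is homogeneous of that degree. Nonvanishing also follows directly from $\Gamma_n\neq 0$.

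The only delicate point is ensuring that the degree is not lowered by a common factor, that is, that the exponent $2^{n-3}$ in the denominator is exactly right rather than an overestimate. I handle this with the bivariate degree statement of Proposition~\ref{prop:bivariatedegree}: for each $k$ there is a term with $\alpha_{k-1,k}+\alpha_{k,k+1}=2^{n-2}$, in which $s_{kk}$ does not appear. So no power of $s_{kk}$ divides $\Gamma_n^h$, and the degree $n\cdot 2^{n-3}$ is intrinsic.
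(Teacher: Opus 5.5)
Your proof is correct and follows the paper's argument: $R_n$ is homogeneous of degree zero, and multiplying by $\prod_k s_{kk}^{2^{n-3}}$ yields a homogeneous polynomial of degree $n\cdot 2^{n-3}$. Your term-by-term bookkeeping via Corollary~\ref{cor:even} and Proposition~\ref{prop:bivariatedegree} just re-derives Proposition~\ref{prop:Rrational}, which the paper cites for polynomiality, and your last paragraph showing that no $s_{kk}$ divides $\Gamma_n^h$ is correct but not needed for the statement as defined.
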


\begin{proof}
    By construction, the rational function $R_n$ is homogeneous of degree zero. Multiplying with the denominator $s_{11}^{2^{n-3}} \cdots s_{nn}^{2^{n-3}}$, which is homogeneous of degree $n\cdot 2^{n-3}$,  gives a homogeneous polynomial of degree $n \cdot 2^{n-3}$. 
\end{proof}

In fact, the homogeneous cycle polynomial is homogeneous with respect to a \emph{multigrading} by the group $\mathbb{Z}^n$. We define the multidegree of $s_{ij}$ as $\deg_n(s_{ij}) = {\bf e}_i + {\bf e}_j$. Here, ${\bf e}_j$ is the $j$-th standard basis vector of $\mathbb{Z}^n$. Note that $R_n$ is homogeneous of multidegree ${\bf 0} \in \mathbb{Z}^n$.

\begin{theorem} \label{thm:multidegGammah}
    The homogeneous cycle polynomial $\Gamma^h_n$ is homogeneous with respect to the multigrading $\deg_n$. We have $\deg_n(\Gamma_n^h) = (2^{n-2}, \ldots, 2^{n-2})$. 
\end{theorem}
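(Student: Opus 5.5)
The plan is to read off the multidegree directly from the construction of $\Gamma_n^h$ in Definition \ref{def:hGamma}. We start from the expansion of $\Gamma_n$ into monomials $c_\gamma \prod_{e\in E_n} x_e^{\gamma_e}$ and perform the substitution \eqref{eq:substitution}. Under the multigrading $\deg_n$, the variable $s_{ij}$ has degree ${\bf e}_i+{\bf e}_j$. Formally, $\sqrt{s_{ii}s_{jj}}$ has degree ${\bf e}_i+{\bf e}_j$ as well, so each substituted variable $x_{ij}=s_{ij}/\sqrt{s_{ii}s_{jj}}$ has multidegree ${\bf 0}$.

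To make this rigorous without fractional powers, we pass to the rational function $R_n$. Proposition \ref{prop:Rrational} already shows that $R_n$ is a genuine rational function whose denominator is $D=s_{11}^{2^{n-3}}\cdots s_{nn}^{2^{n-3}}$. The cleanest way to phrase multihomogeneity is via the torus action. For $\lambda\in(\C^*)^n$, let it act by $s_{ij}\mapsto \lambda_i\lambda_j s_{ij}$ and $s_{kk}\mapsto \lambda_k^2 s_{kk}$. Then a polynomial is $\deg_n$-homogeneous of multidegree $d$ exactly when it scales by $\lambda^d$.

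We first check that $R_n$ is invariant under this action. On the open set where square roots can be chosen consistently, the substituted expressions $s_{ij}/\sqrt{s_{ii}s_{jj}}$ change by the sign $\lambda_i\lambda_j/\sqrt{\lambda_i^2\lambda_j^2}$. To avoid sign issues, we restrict to $\lambda\in\R_{>0}^n$, where each $x_{ij}$ is exactly invariant. Hence $R_n(\lambda\cdot s)=R_n(s)$ for all positive $\lambda$.

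Next, $D(\lambda\cdot s)=\prod_k \lambda_k^{2\cdot 2^{n-3}}D(s)=\lambda^{(2^{n-2},\dots,2^{n-2})}D(s)$. Therefore
\[
\Gamma_n^h(\lambda\cdot s)=\lambda^{(2^{n-2},\dots,2^{n-2})}\,\Gamma_n^h(s)
\]
for all $\lambda\in\R_{>0}^n$. Decompose $\Gamma_n^h$ into its $\deg_n$-homogeneous components $\sum_d P_d$. Then $\sum_d \lambda^d P_d(s)=\lambda^{(2^{n-2},\ldots)}\sum_d P_d(s)$ holds identically for all positive $\lambda$. Since distinct characters $\lambda^d$ are linearly independent as functions on $\R_{>0}^n$, only the component with $d=(2^{n-2},\dots,2^{n-2})$ survives. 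This proves that $\Gamma_n^h$ is multihomogeneous of the stated multidegree.

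The main subtlety is the step asserting that $R_n$ really is the polynomial-over-$D$ given in Proposition \ref{prop:Rrational}. In particular, the odd half-integer powers must cancel, which is guaranteed by Corollary \ref{cor:even}. We also need the evaluation identity $R_n(\lambda\cdot s)=R_n(s)$ to hold as rational functions; it suffices to check it on a Zariski-dense set, such as positive $s_{kk}$ with real $s_{ij}$. Finally, the multidegree is nonzero, since $\Gamma_n^h\neq 0$ because $\Gamma_n\ne0$. As a consistency check, summing the coordinates gives total degree $n\cdot 2^{n-2}$ in the grading where each $s$-variable has weight $2$, i.e.\ ordinary degree $n\cdot 2^{n-3}$, in agreement with Theorem \ref{thm:degGammah}.
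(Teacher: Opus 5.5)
Your proposal is correct and takes essentially the same route as the paper. The paper simply notes that $R_n$ has $\deg_n$-multidegree $\mathbf{0}$ and that the denominator $s_{11}^{2^{n-3}}\cdots s_{nn}^{2^{n-3}}$ has multidegree $(2^{n-2},\ldots,2^{n-2})$, mirroring the proof of Theorem~\ref{thm:degGammah}; your torus-action argument with $\lambda\in\R_{>0}^n$ and linear independence of characters is a careful version of that observation, equivalent to the scaling identity the paper states right after the theorem.
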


Theorem \ref{thm:multidegGammah} can be equivalently expressed as follows. For any $\lambda \in (\mathbb{C}^*)^n$, we~have 
\[ \Gamma_n^h(\lambda_1\lambda_2 \, s_{12}, \ldots, \lambda_1 \lambda_n \, s_{1n}, \lambda_1^2 \, s_{11}, \ldots, \lambda_n^2 s_{nn}) \, = \, \lambda_1^{2^{n-2}} \cdots \lambda_n^{2^{n-2}} \cdot \Gamma_n^h(s_{12}, \ldots, s_{1n}, s_{11}, \ldots, s_{nn}).\]

\section{The algebraic boundary of the elliptope of a graph}\label{sec:boundary}

Recall that the \emph{Zariski closure} $\overline {S}^z$ of a set $S\subseteq \R^n$ is the smallest (with respect to inclusion) affine variety in $\C^n$ that contains $S$, and the {\em algebraic boundary} $\partial_a S=\overline{\partial S}^z \subseteq \mathbb{C}^n$ is the Zariski closure of its euclidean boundary $\partial S$.

In this section we investigate the algebraic boundary of the elliptope $\ME(G)$ and of its outer approximation $\tME(G)$ for a graph $G$. Recall the definition of the semialgebraic sets $S(G)$ and $\tME(G)=S(G)\cap\cos(\pi\MET(G))$  in Equations (\ref{eq:SG}) and  (\ref{eq:tMEG}).
Our main result is that  
 the algebraic boundary of  $\tME(G)$  is given by the cycle polynomials $\Gamma_C$ for the chordless cycles $C$ of $G$ and the determinantal polynomials $\det(x[K])$ for the maximal cliques $K$ (see Theorem~\ref{thm:boundary-cycle-completable}). This directly provides the algebraic boundary of the elliptope $\ME(G)$ when $G$ is cycle completable, in which case $\ME(G) = \tME(G)$.
We also characterize the graphs whose elliptope $\ME(G)$ is a spectrahedron (Corollary~\ref{cor:spectrahedron}). In addition, we extend the results to the cone $\MCscr(G)$ from (\ref{eq:coneCG}) (Corollary~\ref{cor:coneCG}) and to the graphs obtained by adding an apex node to a cycle completable graph (Proposition~\ref{prop:suspensionboundary}).

\subsection{Some geometric properties of graph elliptopes}

We begin with  some preliminary geometric results on the elliptope of a graph.

\begin{proposition}\label{proposition:piboundEn}
    Let $G=(V=[n],E)$ be a graph. 
    We have ${\rm int} ({\cal E}(G))=\pi_E({\rm int} ( {\cal E}_n))$ and 
 $\partial {\cal E}(G)\subseteq \pi_E(\partial {\cal E}_n)$.
\end{proposition}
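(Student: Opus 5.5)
The plan is to use the fact that $\pi_E$ restricted to the affine space of symmetric matrices with unit diagonal is a surjective linear map, hence an open map onto $\R^E$. We identify $\ME_n$ with a convex body in $\R^{E(K_n)}$, with nonempty interior ${\rm int}(\ME_n)=\{X: X_{ii}=1,\ X\succ 0\}$, and then prove the two claims in turn.

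\textbf{Step 1: $\pi_E({\rm int}(\ME_n))\subseteq {\rm int}(\ME(G))$.} Let $X\succ 0$ with unit diagonal. There is an open neighborhood $U$ of $X$ in the affine space of unit-diagonal matrices with $U\subseteq \ME_n$. Since $\pi_E$ is a linear surjection, it is open, so $\pi_E(U)$ is an open set. It contains $\pi_E(X)$ and is contained in $\ME(G)$. Hence $\pi_E(X)\in{\rm int}(\ME(G))$.

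\textbf{Step 2: ${\rm int}(\ME(G))\subseteq\pi_E({\rm int}(\ME_n))$.} This is the step needing an argument, and I would use convexity. Let $x\in{\rm int}(\ME(G))$. The point $\azero=\pi_E(I_n)$ is the image of the interior point $I_n$. Since $x$ is interior, for small $\varepsilon>0$ the point $y=x+\varepsilon(x-\azero)$ still lies in $\ME(G)$. Pick $Y\in\ME_n$ with $\pi_E(Y)=y$. Then
\[
x=\tfrac{1}{1+\varepsilon}\,y+\tfrac{\varepsilon}{1+\varepsilon}\,\azero=\pi_E\big(\lambda Y+(1-\lambda)I_n\big),\qquad \lambda=\tfrac{1}{1+\varepsilon}<1.
\]
The matrix $\lambda Y+(1-\lambda)I_n$ has unit diagonal and is positive definite, since $Y\succeq0$ and $(1-\lambda)I_n\succ 0$. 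So it lies in ${\rm int}(\ME_n)$. Equivalently, one can invoke the standard fact that the relative interior of a linear image of a convex set is the image of its relative interior, applied to full-dimensional sets.

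\textbf{Step 3: the boundary inclusion.} Let $x\in\partial\ME(G)=\ME(G)\setminus{\rm int}(\ME(G))$. Choose $X\in\ME_n$ with $\pi_E(X)=x$. If $X$ were in ${\rm int}(\ME_n)$, Step 1 would put $x$ in ${\rm int}(\ME(G))$, which is a contradiction. Thus $X\in\partial\ME_n$ and $x\in\pi_E(\partial\ME_n)$.

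The only mild obstacle is Step 2, and the convex-combination argument with $I_n$ resolves it directly.
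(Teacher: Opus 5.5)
Your proof is correct. Steps 1 and 3 are essentially the paper's argument: the paper also gets $\pi_E({\rm int}(\ME_n))\subseteq{\rm int}(\ME(G))$ by perturbing only the $E$-entries of a positive definite completion, and derives the boundary inclusion from it in the same way.

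The genuine difference is the reverse inclusion ${\rm int}(\ME(G))\subseteq\pi_E({\rm int}(\ME_n))$.

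\textbf{The paper's route.} It argues by contraposition, using the theorem of alternatives for semidefinite programming. If $x$ has a PSD completion but no positive definite one, there is a nonzero $\Omega=\sum_i y_iE_{ii}+\sum_{\{i,j\}\in E}y_{ij}E_{ij}\succeq 0$ with $\sum_i y_i+\sum_{\{i,j\}\in E} y_{ij}x_{ij}\le 0$. Since $\langle Z,\Omega\rangle\ge 0$ for all $Z\in\ME_n$, this gives a supporting hyperplane of $\ME(G)$ that is tight at $x$.

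\textbf{Your route.} You push $x$ slightly outward from $\azero=\pi_E(I_n)$ and then pull back a convex combination with $I_n$. This is the standard proof that the interior of a linear image of a convex set is the image of its interior.

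\textbf{Comparison.}
\begin{itemize}
\item Your argument is more elementary and self-contained. It uses only convexity, the openness of the coordinate projection, and the fact that $I_n$ is a positive definite point lying over $\azero$. It also needs only the easy direction ``positive definite $\Rightarrow$ interior'' of the description of ${\rm int}(\ME_n)$, as you implicitly use.
\item The paper's argument uses heavier duality machinery. In exchange, it produces an explicit certificate at each boundary point: a nonzero PSD matrix supported on the diagonal and the edges of $G$ that defines a supporting hyperplane there.
\item The paper's argument tacitly needs some $y_{ij}\neq 0$ for that hyperplane to be nontrivial. This does hold: if all $y_{ij}=0$, then $\Omega\succeq 0$ and $\Omega\neq 0$ force $\sum_i y_i>0$, a contradiction.
\end{itemize}
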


\begin{proof}
 Let $x_E\in \pi_E( {\rm int}({\cal E}_n))$, i.e., there exists $X\in {\rm int}({\cal E}_n)$ such that $x_E=\pi_E(X)$. Since $ X\in {\rm int}({\cal E}_n)$, there exists a ball $B_{\epsilon}(X)\subset {\cal E}_n$ for some small $\epsilon>0$. 
    This means that small perturbations of the entries of $X$ result in a positive definite matrix. This clearly holds if we restrict only to the variables labeled with $E$. 
    This implies that there exists a small ball around $x_E$ consisting of points $x\in\R^E$  that can be completed to positive semidefinite matrices, thus showing $x_E\in {\rm int} ({\cal E}(G))$.
    This shows the inclusion $\pi_E( {\rm int}({\cal E}_n))\subseteq {\rm int} ({\cal E}(G))$. 
    Then, the following chain of inclusions holds:
    \[
    \partial{\cal E}(G) = {\cal E}(G)\setminus {\rm int}({\cal E}(G))\subseteq \pi_E({\cal E}_n)\setminus \pi_E( {\rm int}({\cal E}_n)) \subseteq \pi_E({\cal E}_n\setminus {\rm int}({\cal E}_n)) = \pi_E(\partial {\cal E}_n).
    \]
   We now show the reverse   inclusion ${\rm int} ({\cal E}(G)) \subseteq \pi_E( {\rm int}({\cal E}_n))$.
   Equivalently, we show that if $x_E\in {\cal E}(G)$ with $x_E \not\in \pi_E( {\rm int}({\cal E}_n))$, then $x_E\not\in \text{ int}(\ME(G))$.
    By assumption, the system 
    \[
    \label{eq:polysys}
    \left\{
    \begin{aligned}
    & X \succeq 0, \\
    & \langle E_{ii},X\rangle =1,  \text{ for } i\in V,\\
    & \langle E_{ij},X\rangle =x_{ij}, \text{ for }\{i,j\} \in E,
    \end{aligned}
    \right.
    \]
    admits a solution $X$, but it does not admit a positive definite solution. Here, the matrices $E_{ij}=(e_ie_j^T+e_je_i^T)/2$ are the elementary matrices for $1\le i\le j\le n$.
    By the Theorem of Alternatives for  semidefinite programming  (see, e.g., \cite[Lemma 3.6]{LV-MP-2014}), there exist scalars $ y_i$ ($i\in V$) and $y_{ij}$ ($\{i,j\}\in E$) satisfying the system
    \begin{equation}\label{eq:SDPalternatives}
    \left\{
    \begin{aligned}
    &  \Omega = \sum_{i\in V} y_i E_{ii}+\sum_{\{i,j\}\in E} y_{ij}E_{ij}\succeq 0, \ \Omega \ne 0,\\
    &\sum_{i\in V} y_i + \sum_{\{i,j\}\in E} y_{ij}x_{ij} \leq 0.
    \end{aligned}
    \right.
    \end{equation}
Observe that   $\sum_{i\in V} y_i + \sum_{\{i,j\}\in E} y_{ij} z_{ij} \geq 0$ holds for any $z\in {\cal E}(G)$. Indeed, if $z\in \ME(G)$, there exists $Z\in {\cal E}_n$ such that $z=\pi_E(Z)$. Since $Z,\Omega \succeq 0$, we have  $\langle Z,\Omega\rangle \geq 0$, and expanding the scalar product proves the claim. 
 Moreover,   by \Cref{eq:SDPalternatives}, we derive that $x_E$ satisfies the above inequality at equality, 
  which implies that $x_E \in \partial{\cal E}(G)$, and thus  $x_E \not\in {\rm int}({\cal E}(G)$). So, we have shown equality ${\rm int} ({\cal E}(G))=\pi_E({\rm int} ( {\cal E}_n))$.
\end{proof}

\begin{proposition}\label{proposition:cosboundMET}
For any graph $G=(V,E)$, the following assertions hold.
\begin{itemize}
\item[(i)] $\text{\rm int}(\cos(\pi \MET(G)))= \cos(\pi \text{\rm int}(\MET(G)))$ and 
$\partial (\cos(\pi \MET(G)))= \cos(\pi \partial \MET(G))$.
\item[(ii)]
$\text{\rm int}(S(G))=\{x\in \R^E: x[K]\succ 0 \text{ for all } K\in \MK_G\}$. 
\item[(iii)] $\text{\rm int}(\tME(G)) = \text{\rm int}(S(G)) \cap \cos(\pi \text{\rm int}(\MET(G)))$.
\end{itemize}
\end{proposition}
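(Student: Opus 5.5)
For (i), I will use that the map $\phi:a\mapsto \cos(\pi a)$, applied coordinatewise, is a homeomorphism from $[0,1]^E$ onto $[-1,1]^E$. Its inverse is $\frac1\pi\arccos$, which is continuous. Since $\MET(G)\subseteq[0,1]^E$, $\phi$ restricts to a homeomorphism from $\MET(G)$ onto $\cos(\pi\MET(G))$. It also maps $(0,1)^E$ homeomorphically onto $(-1,1)^E$, and both are open subsets of $\R^E$.

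Any interior point $a$ of $\MET(G)$ lies in $(0,1)^E$, because $\MET(G)\subseteq[0,1]^E$. A neighbourhood of $a$ contained in $\MET(G)\cap(0,1)^E$ is sent by $\phi$ to an open set, by invariance of domain or simply because $\phi$ is a diffeomorphism on $(0,1)^E$. That open set contains $\phi(a)$, so $\phi(a)$ is interior.

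Conversely, let $x$ be interior to $\cos(\pi\MET(G))$. Then $x$ lies in the interior of $[-1,1]^E$, that is, in $(-1,1)^E$. There $\phi^{-1}$ is continuous and open, so $\phi^{-1}(x)$ is interior to $\MET(G)$. This proves the interior identity. The boundary identity then follows by taking complements inside the closed sets: $\phi$ is a bijection $\MET(G)\to\cos(\pi\MET(G))$, the boundary is the set minus its interior, and both sets are closed.

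For (ii), $S(G)$ is the intersection over $K\in\MK_G$ of the sets $\{x: x[K]\succeq0\}$. The set $U=\{x: x[K]\succ0\ \forall K\}$ is open, because positive definiteness is an open condition, and it is contained in $S(G)$. Hence $U\subseteq \text{\rm int}(S(G))$.

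For the reverse inclusion, take $x\in S(G)$ with $x[K]$ singular for some $K$, and choose $v\ne0$ with $v^Tx[K]v=0$. Then $v$ has at least two nonzero entries, since the diagonal entries of $x[K]$ equal $1$. Consider $x_t=x-t\,w$, where $w$ is supported on the edges of $K$ with $w_{ij}=v_iv_j$. Then $v^Tx_t[K]v=-2t\sum_{i<j}v_i^2v_j^2<0$ for small $t>0$, so $x_t\notin S(G)$ and $x$ is not interior.

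For (iii), the interior of a finite intersection of sets equals the intersection of their interiors. Combining this with (i) and (ii) gives
\[
\text{\rm int}(\tME(G))=\text{\rm int}(S(G))\cap\text{\rm int}(\cos(\pi\MET(G)))=\text{\rm int}(S(G))\cap \cos(\pi\,\text{\rm int}(\MET(G))).
\]

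The main subtlety is in (i): the cosine is not a local homeomorphism at $a_e\in\{0,1\}$, where its derivative vanishes. This is handled by the observation that such points map into $\partial[-1,1]^E$ and so can never be interior on either side.
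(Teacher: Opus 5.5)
Your proof is correct and follows the same route as the paper: the coordinatewise cosine as a bijection with continuous inverse for (i), an elementary argument for (ii), and the interior of an intersection for (iii). You supply the details the paper only sketches, and restricting to the open cube $(-1,1)^E$ correctly sidesteps the fact that $\arccos$ is not Lipschitz at $\pm 1$, a point the paper's appeal to Lipschitz continuity glosses over.
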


\begin{proof}
For (i), it suffices to show the claim about the interiors since the claim about the euclidean closures then follows from the fact that the map $\cos:[0,\pi]^n \rightarrow [-1,1]^n$ is one-to-one;
 the claim about the interiors follows easily using Lipschitz continuity of the cosine and arccosine maps.
The proof of (ii) is easy, and (iii) directly follows from (i),(ii), using the fact that the interior of an intersection equals the intersection of the interiors.
\end{proof}

\begin{proposition}\label{proposition:VPn}
For a graph $G=(V,E)$, define the polynomial 
\begin{align}\label{eq:boundaryG}
P_G(x)= \prod _{K\in\MK_G} \det x[K] \cdot \prod_{C\in \MC_G} \Gamma_C(x),
\end{align}
where $\MK_G$ is the set of maximal cliques and  $\MC_G$ is the set of chordless cycles of length at least 4 in $G$.
We have  $\partial \tME(G)\subseteq  \tME(G) \cap V(P_G).$
\end{proposition}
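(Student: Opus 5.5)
Let $x\in\partial\tME(G)$. Since $\tME(G)$ is closed (it is an intersection of closed sets: $S(G)$ is closed and $\cos(\pi\MET(G))$ is the continuous image of a compact set), we have $x\in\tME(G)$, so it remains to show $P_G(x)=0$. By Proposition \ref{proposition:cosboundMET}(iii), $x\notin\text{\rm int}(S(G))\cap\cos(\pi\,\text{\rm int}(\MET(G)))$. We split into two cases according to which of these two conditions fails.

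\emph{Case 1: $x\notin\text{\rm int}(S(G))$.} By Proposition \ref{proposition:cosboundMET}(ii) there is a maximal clique $K\in\MK_G$ with $x[K]\not\succ 0$. Since $x\in S(G)$ we still have $x[K]\succeq 0$, so $x[K]$ is singular and $\det x[K]=0$. Hence $P_G(x)=0$.

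\emph{Case 2: $x\in\text{\rm int}(S(G))$ but $x\notin\cos(\pi\,\text{\rm int}(\MET(G)))$.} Put $a=\frac1\pi\arccos(x)\in\MET(G)$; this is well defined and unique because $x\in[-1,1]^E$ and $\cos$ is a bijection $[0,\pi]\to[-1,1]$. By that same bijectivity, $a\notin\text{\rm int}(\MET(G))$, so $a$ satisfies at equality some inequality defining a facet of $\MET(G)$. We recalled earlier that the facets of $\MET(G)$ come from two sources.
\begin{itemize}
\item \emph{Edge facets $a_e\in\{0,1\}$, for edges $e$ lying in no larger clique.} Then $\{e\}$ is itself a maximal clique, and $x_e=\pm1$ gives $\det x[\{e\}]=1-x_e^2=0$.
\item \emph{Cycle inequalities (\ref{eq:cycle}) for chordless cycles $C$.} Equality means $a(F)-a(C\setminus F)=|F|-1$ with $|F|$ odd. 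If $|C|=3$, then $C$ is a clique contained in some maximal clique $K$. By Theorem \ref{thm:cosineMETnoK4minors}, $\cos(\pi\MET(K_3))=\ME_3$ and boundaries correspond under the cosine map (Proposition \ref{proposition:cosboundMET}(i) applied to $K_3$), so $x[C]$ is PSD and singular. A singular principal submatrix forces $x[K]\not\succ0$, which contradicts $x\in\text{\rm int}(S(G))$; so this subcase cannot occur. If $|C|\ge4$, set $t=\pi a_{E(C)}$ and $s_e=1$ for $e\in F$, $s_e=-1$ otherwise. Then $s\cdot t=\pi(|F|-1)=2k\pi$ with $k=(|F|-1)/2\in\Z$, since $|F|$ is odd. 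By Remark \ref{rem:cosLn} (equation (\ref{eq:cosLn})), $x_{E(C)}=\cos(t)\in\ML_{|C|}=V(\Gamma_C)$, so $\Gamma_C(x)=0$ with $C\in\MC_G$.
\end{itemize}

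In every case $P_G(x)=0$, which proves the proposition. The main step requiring care is the facet description of $\MET(G)$: a point on $\partial\MET(G)$ must be tight for an edge bound of an edge in no larger clique, or for a cycle inequality of a chordless cycle. This follows from the facet characterization already recalled (non-facet inequalities are implied by facet ones, so a boundary point lies on some facet). The triangle subcase must be ruled out or absorbed into a clique determinant, as done above.
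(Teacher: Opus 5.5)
Your proof is correct and follows the paper's route. You split $\partial\tME(G)$ via Proposition~\ref{proposition:cosboundMET}(iii) into the part where some maximal-clique determinant vanishes and the part where $\frac1\pi\arccos(x)$ lies on a facet of $\MET(G)$, and in the latter a tight cycle inequality for a chordless cycle gives $\Gamma_C(x)=0$ via Remark~\ref{rem:cosLn}. You also treat explicitly two subcases the paper leaves implicit: the paper excludes edge bounds only through $x\in(-1,1)^E$, and it passes silently over tight triangle inequalities, which you correctly rule out using $x\in\text{\rm int}(S(G))$.
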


\begin{proof}
By Proposition \ref{proposition:cosboundMET}(iii),    $\partial \tME(G)= \tME(G)  \setminus \text{\rm int}(\tME(G))\subseteq \partial S(G) \cup \partial (\cos(\pi\MET(G)))$.
Let $x=\cos(\pi a)\in \partial \tME(G)$ with $a\in[0,1]^E$. If $x\in\partial S(G)$ then, by Proposition \ref{proposition:cosboundMET}(ii),  we have $\det x[K]=0$ for some   $K\in\MK_G$, and thus $P_G(x)=0$.  
If $x \in \partial \tME(G) \setminus \partial S(G)$, then we have $x\in (-1,1)^E$. Moreover,  $x\in \partial (\cos(\pi \MET(G)))=\cos(\pi \partial \MET(G))$, and thus $a\in\partial \MET(G)$ with $a \in (0,1)^E$ by Proposition \ref{proposition:cosboundMET}(i). Hence, there exists $C\in \MC_G$ for which $a$ satisfies a cycle inequality (\ref{eq:cycle}) at equality. From Remark \ref{rem:cosLn},  we obtain that $\Gamma_C(x)=0$, which implies~$P_G(x)=0$.
  \end{proof}

\begin{remark}\label{rem:strictinclusion}
 For $G=C_3$,  the polynomial $P_{C_3}$ has a single factor corresponding to the (unique) maximal clique $K = [3]$ and  equal to the irreducible cubic $\Gamma_3$. The equality $\partial \ME(C_3)=\ME(C_3)\cap V(P_{C_3})$ holds. For the cycle $G=C_n$ with  $n\ge 4$, we have
\[
    P_{C_n} = (1-x_{12}^2) \cdots (1-x_{n-1,n}^2)(1-x_{1n}^2) \cdot \Gamma_n.
\]
So, $P_{C_n}$ has  $n$ factors corresponding to the maximal cliques (i.e., the edges of $C_n$), and one factor $\Gamma_n$ corresponding to the unique chordless cycle in ${\cal C}_{C_n} = \{C_n\}$.
As we now observe, the inclusion  $\partial \ME(C_n)\subsetneq \ME(C_n)\cap V(P_{C_n})$ is strict for $n\ge 4$. For this, consider the vector 
$x=\cos(\pi a)\in \R^n$, with  $a={n-2\over n}\one $ for $n$  even, and $a={n-3\over n}\one $ for $n$  odd. Then, one can check that the point $a$ belongs to the interior of the metric polytope $\MET(C_n)$ since it satisfies all cycle inequalities (\ref{eq:cycle}) strictly. Hence, by Proposition \ref{proposition:cosboundMET}(i), $x\in {\rm int}(\ME(C_n))$. On the other hand, we have $x\in V(P_{C_n})$. Indeed, $x$ satisfies $\Gamma_n(x)=0$, which follows from 
Remark \ref{rem:cosLn}  
since $a\cdot \one \in 2\Z$. In other words, for the cycle $C_n$ of length $n\ge 4$, we have 
$\text{\rm int}(\ME(C_n)) \cap V(\Gamma_n)\ne \emptyset$.
The fact that $V(\Gamma_4)$ intersects the interior of $\ME(C_4)$ can be seen from the first two plots in the last row of Figure \ref{fig:E4}.
\end{remark}

\subsection{The algebraic boundary of the elliptope for cycle completable graphs}
\label{sec:boundary-cycle-completable}

In this section we describe the algebraic boundary of the elliptope $\ME(G)$ when the graph $G$ is cycle completable (Theorem \ref{thm:boundary-cycle-completable}). In fact, we show a stronger result and characterize the algebraic boundary of the larger set $\tME(G)$ from (\ref{eq:tMEG}) for any graph $G$.

Recall that 
$\MK_G$ is the set of maximal cliques of $G$. To avoid trivialities we assume  $G$ has no isolated nodes, so   each maximal clique has size at least 2.  
Hence, $\MK_G=\MK_{G,2}\cup\MK_{G,\ge 3}$, where $\MK_{G,2}$ denotes the set of maximal cliques of size 2 (i.e., the set of edges $e\in E$ that are not contained in a larger clique), and $\MK_{G,\ge 3}=\{K\in \MK_G: |K|\ge 3\}$.
Let $\MC_G$ denote the set of  chordless cycles of $G$ of length at least 4. Since a clique $K$  of size 3 can also be viewed as a (chordless) cycle and $\det x[K]=-\Gamma_K(x)$, we also define the set 
$\MC_{G,\ge 3}$ as the set of chordless cycles of length at least 3. 
The following is the main result of this section.

\begin{theorem}\label{thm:boundary-cycle-completable}
For any graph $G$, the algebraic boundary of the set $\tME(G)$ is the hypersurface defined by the polynomial $P_G$ from   (\ref{eq:boundaryG}), 
i.e., $\partial_a\tME(G)= V(P_G).$ In particular, if $G$ is cycle completable, then $\partial_a\ME(G)= V(P_G).$
\end{theorem}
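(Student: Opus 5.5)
Two inclusions are needed. The inclusion $\partial_a\tME(G)\subseteq V(P_G)$ follows from Proposition~\ref{proposition:VPn}: since $\partial\tME(G)\subseteq V(P_G)$ and $V(P_G)$ is Zariski closed, taking Zariski closures gives the claim. For the reverse inclusion, $V(P_G)$ is the union of the irreducible hypersurfaces $V(\det x[K])$ for $K\in\MK_G$ and $V(\Gamma_C)$ for $C\in\MC_G$. Here $\det x[K]$ is irreducible for $|K|\ge 3$ (symmetric determinant with unit diagonal), and for $|K|=2$ it factors as $(1-x_e)(1+x_e)$. The polynomial $\Gamma_C$ is irreducible by Theorem~\ref{lem:existence}. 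Since $\partial_a\tME(G)$ is Zariski closed, it suffices to show that each irreducible component $Z$ of $V(P_G)$ contains a subset of $\partial\tME(G)$ that is Zariski dense in $Z$. A convenient sufficient condition is a point $x^*\in\partial\tME(G)\cap Z_{\rm reg}$ near which $\partial\tME(G)$ coincides with $Z\cap\R^E$ and is a real hypersurface germ of dimension $|E|-1$. An open piece of the real smooth locus of an irreducible hypersurface is Zariski dense in it.

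The plan is to construct such points component by component.

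\emph{(a) Edge components $x_e=\pm1$, with $e$ not in a triangle.} Take $x_e=1$ and all other coordinates $0$, then perturb the other coordinates slightly. We need such $x$ to lie in $\tME(G)$ while $x_e=1+t$ with $t>0$ leaves it. Clique conditions hold because $e$ lies in no larger clique. For the cycle inequalities, $a_e=0$ and $a_f=1/2$ otherwise, so for a chordless cycle through $e$ of length $p\ge4$ one can check that the inequalities hold strictly. Hence locally $\tME(G)=\{x_e\le1\}$.

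\emph{(b) Cliques $K$ with $|K|\ge3$.} Choose $x$ with $x[K]$ PSD of corank one and generic, and all other coordinates small. Then $\det x[K]$ vanishes simply and every other defining condition is strict. Concretely, take $x[K]$ close to the rank-$(|K|-1)$ matrix $\cos$ of a tiny angular configuration, and the remaining entries $0$. Arccosines near $1/2$ should make cycle inequalities on cycles other than those inside $K$ strict. Cycles with chords are not needed; only chordless cycles of length $\ge4$ matter, and these meet $K$ in at most an edge.

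\emph{(c) Chordless cycles $C$ of length $p\ge4$.} Choose $a\in\R^E$ with $a(C)=p-2$, for instance $a_e=(p-2)/p$ on $E(C)$ (the case $F=E(C)$ odd or the analogous even-$F$ variant), and $a_f=1/2$ off $C$. Then $a$ lies on exactly one facet of $\MET(G)$ at this point. The clique matrices involve at most single edges of $C$ together with entries $0$, so they are positive definite. Near $x=\cos(\pi a)$, the set $\tME(G)$ is then $\cos$ of a half-space, and its boundary is a piece of $\cos(H)\subseteq\ML_C=V(\Gamma_C)$ of full dimension $|E|-1$. Lemma~\ref{lem:specializeGamma} and the parametrization of Remark~\ref{rem:cosLn} show that this piece is Zariski dense in $V(\Gamma_C)$.

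The main obstacle is (c), together with the bookkeeping in (b). One must verify that a point in the relative interior of a single cycle facet of $\MET(G)$ satisfies every other cycle inequality of $G$ strictly, including those of other chordless cycles sharing edges with $C$. It must also stay strictly inside all clique spectrahedra. This uses that chordless cycle inequalities are facet-defining (cited from \cite{Barahona-Mahjoub}), so a relative interior point of that facet exists. One then checks that the facet meets $(0,1)^E$ and that $S(G)$ constraints are slack there. The latter should follow by pushing off-cycle values toward $1/2$ by continuity.
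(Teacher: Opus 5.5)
Step (c) fails as written. The condition $a(C)=p-2$ is not a facet equation of $\MET(C)$. The facets are $a(F)-a(E(C)\setminus F)=|F|-1$ with $|F|$ odd; for $F=E(C)$ and $p$ odd this reads $a(C)=p-1$, and there is no ``even-$F$'' variant. Your point $a_e=\frac{p-2}{p}$ on $E(C)$ satisfies every cycle inequality of $C$ strictly:
\begin{itemize}
\item For $p$ even it is exactly the point of Remark~\ref{rem:strictinclusion}, so $\cos(\pi a)\in V(\Gamma_C)\cap\mathrm{int}(\ME(C))$.
\item For $p$ odd, $s\cdot a=\frac{p-2}{p}(2j-p)$ (with $j$ the number of $+1$'s in $s\in\{\pm1\}^{E(C)}$) is never an even integer, so $\cos(\pi a)\notin V(\Gamma_C)$ at all.
\end{itemize}
Padding with $a_f=\tfrac12$ off $C$ gives an interior point of $\tME(G)$, so your claim that $a$ lies on exactly one facet is false. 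You are conflating $\cos(\pi a)\in\ML_C$ (some $s\cdot a\in2\Z$) with $a\in\partial\MET(C)$. These differ, and that difference is exactly why $\ME(C_p)$ is not a spectrahedron.

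The repair also settles the verification you left as the ``main obstacle''. Take a relative interior point of a genuine facet: for example $a_{e_0}=\tfrac12$ and $a_e=\frac{1}{2(p-1)}$ for $e\in E(C)\setminus\{e_0\}$, which lies on the facet $F=\{e_0\}$ with all other inequalities of $\MET(C)$ and $0<a_e<1$ strict. Set $a_f=\tfrac12$ for $f\notin E(C)$.
\begin{itemize}
\item A cycle $D\neq C$ of $G$ sharing an edge with $C$ has $r\ge2$ edges outside $E(C)$, since a single such edge would be a chord of $C$. For odd $F'\subseteq E(D)$, using $0<a<1$ on $E(C)$, we get $a(F')-a(E(D)\setminus F')<|F'\cap E(C)|+|F'\setminus E(C)|-\tfrac r2\le|F'|-1$.
\item Cycles edge-disjoint from $C$ are slack, since all their $a$-values equal $\tfrac12$.
\item The clique matrices are positive definite, as you say.
\end{itemize}
Hence $\tME(G)$ is locally the cosine image of a half-space, and you obtain an $(|E|-1)$-dimensional patch of $\partial\tME(G)$ inside $V(\Gamma_C)$.

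With this fix your route is a valid alternative to the paper's. The paper invokes Sinn's lemma (Proposition~\ref{lemma:algboundconvex}), so one boundary point per component lying on no other component suffices (Proposition~\ref{prop:tool-boundary}, via generic block-diagonal completions). You instead produce a full-dimensional boundary patch in each component, which needs all other constraints slack but neither Sinn's lemma nor genericity.

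Two smaller points. In (b) you must also treat triangles sharing exactly one edge $f$ with $K$. These are slack iff $|x_f|<1$, which holds when all proper principal submatrices of $x[K]$ are positive definite, e.g.\ $x[K]=\frac{1}{k-1}(kI_k-J_k)$ with $k=|K|$ and zeros elsewhere; the ``tiny angular configuration'' is unnecessary. Also, irreducibility of $\det x[K]$ for $|K|\ge3$ is used essentially and needs proof (Lemma~\ref{lem:det-irreducible}).
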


We begin with proving  Theorem \ref{thm:boundary-cycle-completable} for graphs with no $K_4$-minors. For this, we  describe  the algebraic boundary of the set $\cos(\pi\MET(G))$ for any graph $G$, in which case the arguments are simpler and exploit the description of the euclidean boundary in Proposition \ref{proposition:cosboundMET}(i).

\begin{proposition}\label{prop:aboundarycosMET}
For any graph $G=(V,E)$, we have $\partial_a \cos(\pi\MET(G)) = V(Q_G)$, where the polynomial $Q_G$ is defined by
\[
Q_G(x)=  \prod_{e\in \MK_{G,2}} (x_e-1)(x_e+1) \cdot \prod_{C\in \MC_{G,\ge 3}}   \Gamma_C(x).
\]
Therefore, if $G$ has no $K_4$-minor, then we have $\partial_a\ME(G)= V(P_G)$. 
\end{proposition}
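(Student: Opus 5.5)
The plan is to use the explicit description of the euclidean boundary from Proposition \ref{proposition:cosboundMET}(i), namely $\partial(\cos(\pi\MET(G)))=\cos(\pi\,\partial\MET(G))$, together with the facet structure of the metric polytope recalled in Section \ref{sec:prel-elliptope}. Since $\MET(G)$ is a full-dimensional polytope, $\partial \MET(G)$ is the union of its facets. By the cited results \cite{Barahona-Mahjoub}, \cite[Chap.~27]{Deza-Laurent-1997}, these facets are of two kinds:
\begin{itemize}
\item the faces $\{a_e=0\}$ and $\{a_e=1\}$, for $e\in\MK_{G,2}$;
\item the faces given by equality in a cycle inequality \eqref{eq:cycle}, for $C\in\MC_{G,\ge 3}$ and $F\subseteq C$ with $|F|$ odd.
\end{itemize}
Any other face lies inside one of these, so it contributes nothing new. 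Hence
\[
\partial_a\cos(\pi\MET(G))=\bigcup_{\Phi}\overline{\cos(\pi \Phi)}^z,
\]
where $\Phi$ runs over the facets. It then suffices to compute the Zariski closure of each $\cos(\pi\Phi)$.

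The key step is the following. Let $\Phi$ be a facet with affine hull $H_\Phi\subseteq\R^E$, and write $H_\Phi^{\C}$ for its complexification. I claim that $\overline{\cos(\pi\Phi)}^z=\overline{\cos(\pi H^{\C}_\Phi)}^z$. The inclusion $\subseteq$ is clear. For $\supseteq$, take a polynomial $f$ vanishing on $\cos(\pi\Phi)$. Then $f\circ\cos(\pi\,\cdot)$ is an entire function on the complex affine space $H^{\C}_\Phi$ that vanishes on $\Phi$, which has nonempty relative interior in $H_\Phi$. By the identity theorem it vanishes on all of $H^{\C}_\Phi$.

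Next I identify $\cos(\pi H_\Phi^{\C})$ for each type of facet.
\begin{itemize}
\item For $a_e=0$ or $a_e=1$, the image is the hyperplane $x_e=1$ or $x_e=-1$, respectively.
\item For a cycle facet $a(F)-a(C\setminus F)=|F|-1$, write $t=\pi a$ to get $s\cdot t_C=2k\pi$, where $s\in\{\pm1\}^{E(C)}$ and $k=(|F|-1)/2\in\Z$ (recall $|F|$ is odd). By Remark \ref{rem:cosLn}, $\cos(H_{s,k})=\ML_C=V(\Gamma_C)$. The coordinates outside $C$ are unconstrained, so the image is $V(\Gamma_C)\times\C^{E\setminus E(C)}$. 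This set is already closed, and it is irreducible because $\Gamma_C$ is irreducible (Theorem \ref{lem:existence}).
\end{itemize}
Taking the union over all facets gives exactly $V(Q_G)$.

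For the consequence, assume $G$ has no $K_4$-minor. Then Theorem \ref{thm:cosineMETnoK4minors} gives $\ME(G)=\cos(\pi\MET(G))$. Moreover every maximal clique has size $2$ or $3$. For an edge $K=\{e\}$ we have $\det x[K]=1-x_e^2=-(x_e-1)(x_e+1)$, and for a triangle $\det x[K]=-\Gamma_K$. So $P_G$ and $Q_G$ agree up to sign, and hence $V(P_G)=V(Q_G)$. The main point needing care is the facet description: I need that each cycle inequality of a chordless cycle, and each edge bound for $e\in\MK_{G,2}$, defines a genuine facet with nonempty relative interior, since that is what the identity-theorem argument requires. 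I will take this from the cited literature rather than reprove it.
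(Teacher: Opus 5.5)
Your proof is correct. Its skeleton is the same as the paper's: the facet description of $\partial\MET(G)$ taken from the literature, Proposition~\ref{proposition:cosboundMET}(i), the Zariski closure of a finite union being the union of the closures, and Remark~\ref{rem:cosLn} for the cycle facets.

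The difference is in the key step, which shows that the Zariski closure of the image of a facet is the whole hypersurface.
\begin{itemize}
\item The paper argues by dimension. The set $\cos(\pi\,\mathrm{relint}(\MF))$ is a real analytic manifold of dimension $|E|-1$ lying inside $V(x_e-\cos(\pi\alpha))$ or $V(\Gamma_C)$. These varieties are irreducible of dimension $|E|-1$, so the closure must be all of it. This uses irreducibility of $\Gamma_C$ (Theorem~\ref{lem:existence}). It also uses the standard but implicit fact that an $(|E|-1)$-dimensional real analytic piece cannot lie in a complex variety of smaller dimension.
\item You instead use the identity theorem, applied to $f\circ\cos(\pi\,\cdot)$ on the complexified affine hull. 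This shows that $\overline{\cos(\pi\Phi)}^z$ contains $\cos(\pi H_\Phi^{\C})$. You then compute the latter exactly as $V(\Gamma_C)\times\C^{E\setminus E(C)}$, using $\cos(H_{s,k})=\ML_C=V(\Gamma_C)$ and surjectivity of $\cos$ on $\C$.
\end{itemize}

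Your route avoids dimension theory altogether. It needs only that each facet has nonempty relative interior in its affine hull. Irreducibility of $\Gamma_C$, which you mention, is not actually needed for the set equality; it only matters if one also wants to read off the irreducible components. The paper's route is quicker to state but rests on these two extra facts.

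You are also slightly more accurate than the paper in the final step. You note that $P_G$ and $Q_G$ agree only up to sign, since $\det x[K]=-(x_e-1)(x_e+1)$ for edges and $-\Gamma_K$ for triangles. This is harmless for the zero sets.
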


\begin{proof}
If $G$ has no $K_4$-minor, then $\ME(G)=\cos(\pi\MET(G))$, and $P_G=Q_G$ since $G$ has no clique of size 4. Hence, it suffices to show that $\partial_a\cos(\pi\MET(G))=V(Q_G)$ for any graph~$G$.

 We begin with  the explicit  description of the facets of $\MET(G)$. For each edge $e\in \MK_{G,2}$ and $\alpha\in \{0,1\}$, the set $\MF_{e,\alpha}=\MET(G)\cap\{a\in\R^E: a_e=\alpha\}$ is a facet of $\MET(G)$ and, for each $C\in\MC_{G,\ge 3}$ and $F\subseteq E(C)$ with $|F|$ odd, also the set 
 $\MF_{C,F}=\MET(G)\cap\{a\in \R^{E}: a(F)-a(E(C)\setminus F)=|F|-1\}$ is a facet, and these are all the facets of $\MET(G)$. That is,  
\[\partial \MET(G)=\bigcup_{e\in\MK_{G,2},\alpha \in\{0,1\}}\MF_{e,\alpha} \  \cup
 \bigcup_{C\in \MC_{G,\ge 3},\ F\subseteq E(C),\ |F| \text{\rm odd}} \MF_{C,F}.
 \]
Applying the componentwise cosine map to both sides one obtains that 
\begin{equation} \label{eq:bdrynoK4minor} 
\cos (\pi\partial \MET(G))= \bigcup_{e,\alpha}\cos (\pi \MF_{e,\alpha})\cup\bigcup_{C,F}\cos(\pi \MF_{C,F}).
\end{equation}
It is clear that $\cos (\pi \MF_{e,\alpha}) \subseteq V(x_e-\cos(\pi \alpha))$, and $\cos(\pi \MF_{C,F}) \subseteq V(\Gamma_C)$ holds by Remark~\ref{rem:cosLn}. By taking the Zariski closure on both sides of these two inclusions, we obtain
$\overline{\cos (\pi \MF_{e,\alpha})}^z  \subseteq V(x_e-\cos(\pi \alpha))$ and $ \overline{\cos(\pi \MF_{C,F})}^z \subseteq V(\Gamma_C)$. Here, $\overline{\cdot}^z$ is the Zariski closure.
The key observation is that  
the latter two inclusions are in fact equalities, which we prove below using a dimension argument.
Once we establish this claim,   Proposition~\ref{prop:aboundarycosMET}   follows directly from \eqref{eq:bdrynoK4minor} using  the fact that the closure of the union is the union of closures. 
Let $\MF$ denote one of the facets  $\MF_{e,\alpha}$ or $\MF_{C,F}$. 
Both the relative interior ${\rm relint}( \pi \MF )$ of $\MF$ and its   image ${\rm cos}({\rm relint}(\pi \MF)) $ are real analytic manifolds of dimension $|E|-1$, since the cosine map is one-to-one and thus preserves the dimension.
Therefore, the smallest variety containing ${\rm cos}({\rm relint}(\pi \MF))$ must have dimension at least $|E|-1$. The varieties $V(x_e - \cos(\pi \alpha))$ and $V(\Gamma_C)$ have dimension $|E|-1$ and they are irreducible. Hence, the equalities $\overline{\cos (\pi \MF_{e,\alpha})}^z = V(x_e-\cos(\pi \alpha))$ and $ \overline{\cos(\pi \MF_{C,F})}^z =V(\Gamma_C)$ hold, as desired. The correspondence between facets of $\MET(G)$ and their images under the cosine map is nicely seen, for $G = C_4$, from the color coding in Figure \ref{fig:E4}.
\end{proof}

For the description of the algebraic boundary of the set $\tME(G)$, there are additional determinantal hypersurfaces $\det(K)=0$ (for the cliques of $G$) 
that need to be added  
to the hypersurfaces that form part of the algebraic boundary boundary of $\cos(\pi \MET(G))$. We need some additional technical arguments in order to show Theorem \ref{thm:boundary-cycle-completable}. We state three auxiliary results that will be key ingredients in the proof. The first key ingredient is the fact that each irreducible component of the algebraic boundary of a full-dimensional convex semialgebraic set has codimension 1. 
 
\begin{proposition}\label{lemma:algboundconvex} \cite[Lemma 2.5]{sinn2015algebraicboundary}
    Let $ S \subsetneq \R^n$ be a convex semialgebraic set with non-empty interior. Each irreducible component of the algebraic boundary of $S$ has codimension $1$ in $\C^n$, i.e., $\partial_a S$ is a hypersurface.
\end{proposition}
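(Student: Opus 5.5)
We follow the standard argument from convex algebraic geometry: show that the algebraic boundary is the Zariski closure of a set of dimension $n-1$, and that no component can be thicker. First, $\partial_a S\ne\mathbb{C}^n$. Since $S$ is semialgebraic, so is its euclidean boundary $\partial S$. Because $S$ is convex with non-empty interior, $\partial S$ has empty interior in $\R^n$: a convex body's boundary is nowhere dense, being locally a graph of a convex function. Hence the real dimension of $\partial S$ as a semialgebraic set is at most $n-1$. The Zariski closure of a real semialgebraic set has complex dimension equal to the semialgebraic dimension. Therefore every irreducible component of $\partial_a S$ has dimension at most $n-1$.

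For the reverse inequality, fix an irreducible component $W$ of $\partial_a S$. Since $\partial_a S=\overline{\partial S}^z$ is a finite union of components, $W\cap\partial S$ is not contained in the union of the other components; otherwise $W$ would be superfluous. So there is a point $p\in\partial S\cap W$ with a euclidean neighborhood $U$ such that $\partial S\cap U\subseteq W$, provided the other components are closed and avoid $p$. Near $p$ we use convexity. Choose an interior point $c$ of $S$ and a small ball $B$ around $c$ inside $\mathrm{int}(S)$. Consider the radial map sending each direction $u$ near $(p-c)/|p-c|$ to the unique boundary point on the ray $c+\R_{>0}u$. This map is a homeomorphism from an open subset of the sphere $S^{n-1}$ onto a neighborhood of $p$ in $\partial S$, which is well defined by convexity and full-dimensionality (when $S$ is unbounded in that direction we shrink the direction set around a direction that hits $\partial S$ at $p$). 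Hence $\partial S\cap U$ contains a set homeomorphic to an open $(n-1)$-ball, so it has semialgebraic dimension $n-1$. It lies in $W$, so $\dim W\ge n-1$, giving codimension exactly $1$.

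The main obstacle is the localization step: choosing $p$ in a single component and away from the others. We would handle it by noting that the union $W'$ of the other components is Zariski closed with $W\not\subseteq W'$. The set $\partial S\setminus W'$ is nonempty within $W$; otherwise $\partial S\subseteq W'$, contradicting minimality. Since $W'$ is closed, a point $p\in(\partial S\cap W)\setminus W'$ has a neighborhood $U$ disjoint from $W'$, so $\partial S\cap U\subseteq W$. The radial homeomorphism argument near $p$ then requires only that $U$ be small, which completes the plan. We should also verify that $\partial S\neq\emptyset$, which follows from $S\subsetneq\R^n$ together with the connectedness of $\R^n$.
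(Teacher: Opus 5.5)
Your proof is correct. The paper does not prove this statement; it quotes it from Sinn's Lemma~2.5. So your write-up is a self-contained replacement, not a variant of an argument in the text. Its structure is the standard one:
\begin{itemize}
\item the upper bound comes from $\dim_{\R}\partial S\le n-1$ together with $\dim_{\C}\overline{A}^{z}=\dim_{\R}A$;
\item the lower bound comes from choosing $p\in\partial S$ off the union $W'$ of the other components and showing that $\partial S$ has local dimension $n-1$ at $p$.
\end{itemize}

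Three steps deserve one more sentence each.

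First, the parenthetical about unbounded directions should be replaced by the actual reason the radial map is defined near $u_0=(p-c)/|p-c|$. One has $[c,p)\subseteq\mathrm{int}(S)$ and $q=p+\varepsilon(p-c)\notin\overline S$, since otherwise $p\in[c,q)\subseteq\mathrm{int}(S)$. Because $\R^n\setminus\overline S$ is open, every ray from $c$ with direction close to $u_0$ passes through a ball around $q$ and hence leaves $\overline S$. Such a ray meets $\overline S\setminus\mathrm{int}(S)$ in exactly one point $c+u/\gamma(u)$, where $\gamma$ is the Minkowski gauge of $\overline S-c$. Since $\gamma$ is convex and finite, it is continuous, so the radial map is continuous; its inverse $x\mapsto (x-c)/|x-c|$ is clearly continuous.

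Second, you pass from ``contains a copy of an open $(n-1)$-ball'' to ``semialgebraic dimension $n-1$''. To justify this, note that $\gamma$ is semialgebraic by Tarski--Seidenberg. The radial map is therefore a semialgebraic homeomorphism, and such maps preserve dimension. Alternatively, semialgebraic dimension agrees with topological dimension.

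Third, you are implicitly using $\partial S=\overline S\setminus\mathrm{int}(S)$, which is the right convention here. With $\partial S=S\setminus\mathrm{int}(S)$, as written in the paper for its closed elliptopes, the statement fails for non-closed $S$. For example, take the open unit disk together with a single boundary point.
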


A second key  ingredient is  that the determinant of a correlation matrix of size at least 3 is an irreducible polynomial. For lack of a precise reference, we include a proof. 

\begin{lemma}\label{lem:det-irreducible}
The determinant $\det (X_n)\in \C[x_{ij}: 1\le i<j\le n]$ of a correlation matrix $X_n \in \ME_n$ of size $n\ge 3$, whose off-diagonal entries are variables $x_{ij}$, is an irreducible polynomial of degree~$n$.
\end{lemma}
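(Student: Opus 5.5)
We argue by induction on $n$, using the fact that $\det(X_n)$ is affine-linear in each off-diagonal variable. The degree statement is immediate: the Leibniz expansion contains the term $(-1)^{n-1}\cdot(\text{sign})\,x_{12}x_{23}\cdots x_{n-1,n}x_{1n}$ coming from the $n$-cycle permutation. No other permutation produces this monomial, and no permutation produces a monomial of degree larger than $n$.

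For the base case $n=3$ we have $\det X_3=-\Gamma_3$, the Cayley cubic. Viewed as a polynomial in $x_{12}$, it is $-x_{12}^2+2x_{13}x_{23}x_{12}+(1-x_{13}^2-x_{23}^2)$. A factorization would have to be into two factors linear in $x_{12}$, since a factor free of $x_{12}$ divides all coefficients, and these have no common factor. Linear factors would make the discriminant $4x_{13}^2x_{23}^2+4(1-x_{13}^2-x_{23}^2)=4(1-x_{13}^2)(1-x_{23}^2)$ a square in $\C[x_{13},x_{23}]$, which it is not. Alternatively, irreducibility also follows from Theorem \ref{lem:existence}.

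For the inductive step with $n\ge4$, we expand along the last row and column with respect to $x_{1n}$. This gives $\det X_n = -\det X_{n-2}'\, x_{1n}^2 + \beta\, x_{1n} + \det X_{n-1}$. Here $X_{n-1}$ is the leading principal $(n-1)\times(n-1)$ block, and $X'_{n-2}$ is the principal submatrix on indices $\{2,\dots,n-1\}$; the latter is the cofactor of the $(1,1)$-entry within the $2\times 2$ minor pattern on $\{1,n\}$. So $\det X_n$ has degree exactly $2$ in $x_{1n}$, with leading coefficient $-\det X'_{n-2}$, which is nonzero.

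Suppose $\det X_n=fg$ with neither factor constant. There are two cases.

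First, one factor, say $f$, does not involve $x_{1n}$. Then $f$ divides both $\det X'_{n-2}$ and $\det X_{n-1}$. By the same token, applied to any variable $x_{ij}$ by symmetry under simultaneous row and column permutations, $f$ involves no variable at all. To make this precise: if $f$ is nonconstant, it involves some $x_{ij}$. Permuting indices, we may assume $\{i,j\}=\{1,n\}$, and the previous argument then shows $f$ does not involve $x_{1n}$, a contradiction. So the real task is to exclude the second case.

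Second, both $f$ and $g$ are linear in $x_{1n}$. Then the discriminant $\beta^2+4\det X'_{n-2}\det X_{n-1}$ must be a square in the remaining variables. The cleaner way to rule this out is the classical identity (Desnanot--Jacobi / Sylvester): $\beta^2+4\det X'_{n-2}\det X_{n-1} = 4\det X_{[n]\setminus\{n\}}\det X_{[n]\setminus\{1\}}$, up to the normalization of $\beta$. By induction, both $\det X_{[n]\setminus\{n\}}$ and $\det X_{[n]\setminus\{1\}}$ are irreducible. They are also distinct and non-associate, because they involve different variable sets ($x_{12}$ versus $x_{2n}$). Hence their product is squarefree and not a square, which is the desired contradiction.

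The main obstacle is getting this discriminant identity exactly right, including its constants. I would verify it by writing $\det X_n$ as a quadratic in $x_{1n}$ and applying Desnanot–Jacobi to the index pair $\{1,n\}$: $\det X_n\cdot\det X'_{n-2} = \det X_{n-1}\det X_{[n]\setminus\{1\}} - c^2$, where $c$ is the off-diagonal minor, which is affine in $x_{1n}$. Once this is set up, the discriminant computation is routine.
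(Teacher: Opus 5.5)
Your route, viewing $\det X_n$ as a quadratic in $x_{1n}$ and showing its discriminant is not a square, differs from the paper's, and Case 2 is right in substance. However, the expansion it rests on is wrong, and Case 1 is not proved.

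\textbf{The expansion and the discriminant identity.} The constant coefficient of $\det X_n$ in $x_{1n}$ is $c_0:=\det X_n|_{x_{1n}=0}$, not $\det X_{n-1}$, because $c_0$ still involves $x_{2n},\dots,x_{n-1,n}$. For $n=4$ and $x_{12}=x_{13}=x_{23}=0$, one has $\det X_4=1-x_{14}^2-x_{24}^2-x_{34}^2$. So there $\beta=0$, $c_0=1-x_{24}^2-x_{34}^2$ and $\det X_3=1$. Your displayed identity then reads $4=4(1-x_{24}^2-x_{34}^2)$, which no normalization of $\beta$ can fix. The identity you need is nevertheless true, and it follows from the Desnanot--Jacobi relation you quote. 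Write it as $\det X_n\cdot d=D_1D_n-m^2$, with $d=\det X'_{n-2}$, $D_1=\det X_{[n]\setminus\{1\}}$, $D_n=\det X_{[n]\setminus\{n\}}$, and $m=m_0\pm d\,x_{1n}$ (your $c$). Then the coefficients of $\det X_n$ in $x_{1n}$ are $-d$, $\mp 2m_0$ and $(D_1D_n-m_0^2)/d$, so the discriminant is exactly $4D_1D_n$. Since $D_1$ and $D_n$ are irreducible by induction and not associate, this is not a square in $\C[x_e:e\neq\{1,n\}]$. That correctly rules out two factors each linear in $x_{1n}$.

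\textbf{The gap: Case 1.} Your claim that $f$ divides $\det X_{n-1}$ uses the wrong constant term. The ``to make this precise'' step is also circular. After permuting so that $f$ involves $x_{1n}$, you are no longer under the Case 1 hypothesis, and nothing shown earlier forces a factor to avoid $x_{1n}$.

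A correct argument goes as follows. A factor $f$ free of $x_{1n}$ divides every coefficient, in particular $d=\det X[[n]\setminus\{1,n\}]$. Hence $f$ involves no variable $x_{1j}$. By symmetry, $\det X_n$ is quadratic in each $x_{1j}$ with leading coefficient $-\det X[[n]\setminus\{1,j\}]$, so $f$ divides each of these. Therefore $f$ involves only variables $x_{kl}$ with $k,l\notin\{1,j\}$, for every $j\ge 2$, which means no variable at all.

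Alternatively, $f\mid d$ and $f\mid m_0$ give $f\mid D_1D_n$. Then every irreducible factor of $f$ would be associate to $D_1$ or $D_n$ while also dividing $d\neq 0$, which has smaller degree; that is impossible.

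\textbf{A small slip.} The monomial $x_{12}x_{23}\cdots x_{n-1,n}x_{1n}$ comes from the $n$-cycle \emph{and} its inverse. Both have sign $(-1)^{n-1}$, so its coefficient is $2(-1)^{n-1}$; your degree conclusion is unaffected.

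\textbf{Comparison with the paper.} The paper restricts to the coordinate subspace $x_{1n}=\dots=x_{n-1,n}=0$, where $\det X_n$ becomes the irreducible $\det X_{n-1}$. A degree count in the variables of $X_{n-1}$ then forces any cofactor to depend only on $x_{1n},\dots,x_{n-1,n}$, and a symmetry argument finishes; this needs little beyond Laplace expansion. Your argument works one variable at a time and yields an explicit certificate: the discriminant $4D_1D_n$ is squarefree. The price is the Desnanot--Jacobi identity, the induction hypothesis for two different principal $(n-1)$-minors, and a separate argument for the content.
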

\begin{proof}
It is clear that $\det(X_n)$ has degree at most $n$. To see that the degree is equal to $n$, notice that the cyclic term $x_{12}x_{23} \cdots x_{n-1,n} x_{1n}$ appears with coefficient $(-1)^{n-1} 2$. Indeed, this term comes from the two cyclic permutations of $[n] = \{1, \ldots, n \}$.

Our proof of irreducibility is by induction.  The base case is $n = 3$, which is settled since $\det (X_3)=-\Gamma_3$ is indeed irreducible. Assume $n\ge 4$. Laplace expansion of $\det(X_n)$ along the last row (or column) reveals that $\det(X_n) = \det(X_{n-1}) + x_{1n} q_1 + \cdots + x_{n-1,n} q_{n-1}$, where $\det(X_{n-1}) \in \mathbb{C}[x_{ij} : 1 \leq i < j \leq n-1]$ is the upper left principal $(n-1) \times (n-1)$ minor of $X_n$, and $ \pm q_i$ are other $(n-1) \times (n-1)$ minors of $X_n$. Restricting $\det(X_n)$ to the coordinate subspace $L = \{ X \in \MS^n \, : \, x_{jn} = 0 \text{ for } j = 1, \ldots, n-1 \}$, we see that $\det(X_n)_{|L} = \det(X_{n-1})$.

Suppose that $\det(X_n) = f \cdot g$ for some $f, g \in \C[x_{ij}: 1\le i<j\le n]$. In particular, we have $\det(X_n)_{|L} = f_{|L} \cdot g_{|L}$. By the induction hypothesis, $\det(X_{n-1})$ is irreducible. Hence, we may assume without loss of generality that $f_{|L} = c \, \det(X_{n-1})$ and $g_{|L} = c^{-1}$ for some nonzero $c \in \C$. This implies, by the fact that $\deg(\det(X_{n-1})) = n-1$, that $f$ has degree $n-1$ in the variables $x_{ij},  1 \leq i < j \leq n-1$. The polynomial $\det(X_n)$ has at most degree $n-1$ in these variables, so that the degree of $g$ in $x_{ij}, \ 1 \leq i < j \leq n-1$, must be zero. Therefore, $\det(X_n)$ has a factor $g$ which only depends on the variables $x_{jn},$ $ j = 1, \ldots, n-1$. Assume $g$ is not a constant. Then, by symmetry, for each $k\in [n]$, $\det(X_n)$ must also have a non-constant factor which only depends on $x_{jk},$ $ j\in [n] \setminus \{k\}$. This contradicts $\det(X_n)_{|L} = \det(X_{n-1})$, so we conclude that $g$ must be a constant, and $\det(X_n)$ is irreducible.
\end{proof}

Note that the above lemma fails for a correlation matrix $X_n$ of size $n=2$, in which case  $\det (X_2)=1-x_{12}^2=(1-x_{12})(1+x_{12})$. 
The third key ingredient for the proof of Theorem~\ref{thm:boundary-cycle-completable} is the following technical result, showing that each of the varieties $V(\det x[K])$ and $V(\Gamma_C)$  is needed in the description of   $\partial_a \tME(G)$ and $\partial_a\ME(G)$.
We postpone the lengthy proof   to Section~\ref{sec:proof-prop}.

 \begin{proposition}\label{prop:tool-boundary}
Any graph  $G$    
 satisfies the following properties (PK) and (PC):
\begin{itemize}
\item[(PK)] For each clique $K\in \MK_G$, there exists  $x\in \partial\ME(G)$ such that 
$\det x[K]=0$, 
$\det x[K']\ne 0$ for all $K'\in \MK_G\setminus \{K\}$, and $\Gamma_C(x)\ne 0$ for all $C\in\MC_G$. 
\item[(PC)]  For each cycle $C\in\MC_G$, there exists $x\in\partial \ME(G)$ such that $\Gamma_C(x)=0$,  
$\det x[K]\ne 0$ for all $K\in \MK_G$, and $\Gamma_D(x)\ne 0$ for all $D\in \MC_G\setminus \{C\}$.
\end{itemize}
In addition, any graph $G$ also satisfies the properties (PK) and (PC), where we replace the   condition $x\in \partial \ME(G)$ by the condition $x\in\partial \tME(G)$.
\end{proposition}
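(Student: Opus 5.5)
The plan is to build explicit witness points by perturbation, always starting from a point deep in the interior, namely $x=\mathbf{0}$ or points of the form $\cos(\pi a)$ with $a$ near $\tfrac12\one$. A preliminary observation handles the second statement for free. Since $\ME(G)\subseteq\tME(G)$ and both are full-dimensional, any $x\in\partial\ME(G)$ that also lies in $\partial\tME(G)$ serves for both versions. Every witness will be chosen in $\ME(G)$ and on a hypersurface that already bounds $\tME(G)$, so it lies in the boundary of both sets.

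\emph{Property (PK).} Fix $K\in\MK_G$. Take a matrix $X\in\ME_n$ defined as follows: the principal block $X[K]$ is singular of corank one and generic, and every other entry is very small. For instance, set $X[K]=(1-t)I+t\,vv^T$ on $K$, with $v$ generic of unit length and $t$ chosen so that the block is singular. All remaining off-diagonal entries are $0$ and the diagonal is $1$. Then $X\succeq0$, so $x=\pi_E(X)\in\ME(G)$. Also $\det x[K]=0$, so $x\in\partial S(G)\subseteq$ boundary of both sets, by Proposition~\ref{proposition:cosboundMET}(ii),(iii) and $\ME(G)\subseteq\tME(G)\subseteq S(G)$. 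It remains to ensure that the other polynomials do not vanish at $x$. Any other maximal clique $K'$ satisfies $K'\not\subseteq K$, so $x[K']$ is a direct sum of a proper principal submatrix of the corank-one block and an identity part. A proper principal submatrix of a generic corank-one PSD matrix is positive definite, so $\det x[K']\neq0$. For a chordless cycle $C$ of length $\ge4$, at most two of its vertices are in $K$, since $C$ is chordless and $K$ is a clique. Hence $C$ has at most one edge with nonzero entry and the rest are $0$. Lemma~\ref{lem:specializeGamma} and the product formula~(\ref{eq:Gammauniv}) at $x_e=0$ (i.e.\ $a_e=\tfrac12$) then show that $\Gamma_C\ne0$ there: by Remark~\ref{rem:cosLn}, $\Gamma_C=0$ would require $\pm a_{e_1}+\sum_{e\neq e_1}\pm\tfrac12\in2\Z$. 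With $|C|\ge4$ one can fail this for all sign choices by choosing the one entry generic. In the case $|K|=2$, $\det x[K]=0$ means $x_e=\pm1$ and the same reasoning applies.

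\emph{Property (PC).} Fix $C\in\MC_G$ of length $p\ge4$. Choose $a\in[0,1]^{E(C)}$ on a facet of $\MET(C)$ in its relative interior and generic, e.g.\ $F=E(C)$ when $p$ is odd. Take $a_e=\tfrac{p-1}{p}+$ (a small generic perturbation summing to zero) — or, if $p$ is even, take $F$ a single edge. Set $x_e=\cos(\pi a_e)$ on $E(C)$. By Theorem~\ref{thm:cosineMETnoK4minors}, $x|_{C}\in\partial\ME(C)$, and the completion can be taken of rank $\le2$ (Remark~\ref{rem:cosLn}), i.e.\ Gram vectors in $\R^2$ at the given angles. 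Extend this to a Gram representation for all of $G$: vertices of $C$ get these planar unit vectors, and all other vertices get generic unit vectors in $\R^N$, nearly orthogonal to the plane and to each other. This yields $x\in\ME(G)$ with $\Gamma_C(x)=0$ and $x$ on a facet image, hence $x\in\partial\tME(G)$ by Proposition~\ref{proposition:cosboundMET}(i),(iii), and hence $x\in\partial\ME(G)$. For each clique $K$, $|K\cap V(C)|\le2$. So $x[K]$ is the Gram matrix of at most two generic planar vectors plus near-orthogonal generic vectors, which is nonsingular; edges $e$ with $x_e=\pm1$ are excluded by genericity of $a\in(0,1)^{E(C)}$.

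The main obstacle is $\Gamma_D(x)\ne0$ for the other chordless cycles $D\ne C$. Here $D$ must use an edge outside $E(C)$ or a vertex outside $V(C)$, since $D\neq C$ and both are chordless, so $E(D)\not\subseteq E(C)$. At least one coordinate $x_e$, $e\in E(D)\setminus E(C)$, is then a free generic parameter coming from the generic outside vectors. By Lemma~\ref{lem:specializeGamma}, $\Gamma_D$ restricted to the other coordinates remains a nonzero polynomial in that free coordinate, so a generic choice avoids its finitely many roots. I expect the delicate point to be making the genericity argument honest. The free entries $x_e$ for $e\in E(D)\setminus E(C)$ are not fully independent, because they come from Gram vectors. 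I would therefore use a Zariski-density argument: the set of admissible Gram configurations is an irreducible real parametrization whose image in those coordinates is full-dimensional, and $\Gamma_D$ is not identically zero on it. This is where I would check edges joining two vertices of $C$, which cannot exist since $C$ is chordless, so every edge outside $E(C)$ touches a free vector.
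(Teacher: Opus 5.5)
\textbf{Gap in (PK).} Your (PK) witness does not work. With $X=X[K]\oplus I$, every entry of $x$ outside $K$ is $0$. Your check of $\Gamma_C(x)\neq0$ only covers a chordless cycle that meets $K$ in an edge whose entry can be taken generic, and two cases break it.

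First, suppose $C\in\MC_G$ has no edge inside $K$. Then $x|_{E(C)}=\mathbf 0$, i.e.\ $a=\tfrac12\one$. For $|C|$ even, the sign vector with half of its entries $+1$ gives $s\cdot a=0\in 2\Z$, so $\Gamma_C(\mathbf 0)=0$ (cf.\ $\Gamma_4(\mathbf 0)=0$ in Example~\ref{ex:intro}). For instance, in $K_{2,3}$ with $K$ an edge, the chordless $4$-cycle avoiding that edge is a zero of its cycle polynomial.

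Second, suppose $|K|=2$. Then the entry on $K$ is forced to be $\pm1$, so $a_{e_1}\in\{0,1\}$ is not generic, and for odd $|C|$ the vanishing condition is satisfiable. For $G=C_5$ your witness is $(\pm1,0,0,0,0)$, and $0+\tfrac12+\tfrac12-\tfrac12-\tfrac12=0$ and $1+\tfrac12+\tfrac12+\tfrac12-\tfrac12=2$ show that $\Gamma_5$ vanishes there. So ``the same reasoning applies'' is false.

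Separately, the block formula is wrong. The matrix $(1-t)I+t\,vv^T$ has diagonal entries $1-t+tv_i^2$, and it is singular only at $t=1$, where it has rank one. A generic corank-one correlation block is, e.g., the Gram matrix of $p$ generic unit vectors in $\R^{p-1}$.

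The missing idea is to make the entries inside $W=V\setminus K$ generic rather than zero, as the paper does. Take $X=Z\oplus Y$ with $Z$ any corank-one correlation matrix on $K$ (the paper uses $\frac{1}{p-1}(pI_p-J_p)$). Choose $Y\succ0$ with $y=\pi_{E(G[W])}(Y)$ generic in the open set $\mathrm{int}(\ME(G[W]))$. A chordless $C$ with $|C|\ge4$ meets $K$ in at most one edge, so it has at least $|C|-3\ge1$ edges inside $W$. By Lemma~\ref{lem:specializeGamma}, the specialization of $\Gamma_C$ is then a nonzero polynomial in $y$, and avoiding these finitely many hypersurfaces gives the witness. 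Finally, $Y\succ0$ keeps $\det x[K']\neq0$; here the fact you need is $K\not\subseteq K'$, not $K'\not\subseteq K$.

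\textbf{Your (PC) argument.} This part is sound in outline and takes a genuinely different route from the paper. The paper again uses a block-diagonal $Z\oplus Y$: it chooses $z$ on the cosine image of a facet of $\MET(C)$ avoiding a lower-dimensional set, then takes $Y\succ0$ generic. You instead realize $z$ by planar unit vectors (a rank-two completion) and make the cross entries between $V(C)$ and $W$ generic as well.

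That extra freedom is not cosmetic. With zero cross entries, an even chordless $D$ all of whose edges join $V(C)$ to $W$ gets $x|_{E(D)}=\mathbf 0$. An example is $G=K_{2,4}$ with $C=(c_1,d_1,c_2,d_2)$ and $D=(c_1,w_1,c_2,w_2)$. There the specialized polynomial $g_D$ is the constant $\Gamma_4(\mathbf 0)=0$, so the block-diagonal choice cannot succeed, and your construction handles this case.

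To make your density step honest, first note that the image is not full-dimensional in all cross coordinates at once. Writing $Pw$ for the projection of $w$ onto the plane, a vertex $w\in W$ with three neighbours on $C$ has its entries $\langle u_c,Pw\rangle$ confined to a $2$-dimensional space. However, you only need full-dimensionality in the coordinates $E(D)\setminus E(C)$ for each fixed $D$, and three facts give it:
\begin{itemize}
\item $D$ uses at most two cross edges at each $w$, going to distinct $c\neq c'$.
\item For generic $a$ on the facet, $u_c$ and $u_{c'}$ are independent, since their angle is $\pi$ times a proper partial signed sum of the $a_e$, which is nonconstant on the facet.
\item The entries inside $W$ are free through the components orthogonal to the plane.
\end{itemize}
Lemma~\ref{lem:specializeGamma} together with connectedness of the configuration space $(S^{N-1})^{W}$ then yields a common generic choice. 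Your shortcut of placing every witness in $\ME(G)\cap\partial\tME(G)$, to get both versions at once, is correct.
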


We now have all tools in hand to prove Theorem \ref{thm:boundary-cycle-completable}. 
 
\begin{proof}[Proof of Theorem \ref{thm:boundary-cycle-completable}]
It suffices  to show the equality $\partial_a\tME(G)=V(P_G)$ for any graph $G$, since this implies $\partial_a\ME(G)=V(P_G)$ if $G$ is cycle completable as $\ME(G)=\tME(G)$ in that case.

Let $\MX=\partial_a \tME(G)$ denote
 the algebraic boundary of $\tME(G)$ and set 
$\MY=V(P_G)$. We saw earlier in Proposition \ref{proposition:VPn} that $\partial \tME(G)\subseteq V(P_G)$. Hence, we have the inclusion $\MX\subseteq \MY$; we  now show that equality holds.
For this, let  $\MX=\bigcup_{l=1}^L \MX_l$ be the decomposition of the variety $\MX$ into its  irreducible components. 
Recall that the  set $\tME(G)=S(G)\cap \cos(\pi\MET(G))$ is convex semialgebraic with non-empty interior, since it is the intersection of two convex semialgebraic sets (recall Lemma \ref{lem:cosMET}) and the origin lies in its interior.
Hence, by Proposition \ref{lemma:algboundconvex}, each irreducible component $\MX_l$ has codimension 1. By the definition of the polynomial $P_G$ in (\ref{eq:boundaryG}), the variety $\MY$ decomposes as
$$\MY=\bigcup_{ e\in \MK_{G,2}} \Big [ V(1-x_e) \cup V(1+x_e) \big ] \cup \bigcup_{K\in \MK_{G,\ge 3}} V(\det (x[K])) \cup \bigcup_{C\in \MC_G} V(\Gamma_C) \, = \, \bigcup_{j=1}^M \MY_j,
$$
where each component $\MY_j$ is irreducible of codimension 1 (using Lemma \ref{lem:det-irreducible} for irreducibility of $V(\det[K])$), and with  $M=2|\MK_{G,2}|+|\MK_{G,\ge 3}| + |\MC_G|$. Since $\MX\subseteq \MY$, it follows that $\MX_l=\MY_ l$ for $l=1,\ldots,L$, after suitably relabeling the components of $\MY$. 
Thus, $L\le M$. We show that $L=M$, which implies $\MX=\MY$, as desired. Assume for contradiction that $M>L$. Pick an index $k$ such that $L+1\le k\le M$. Now, we use   Proposition \ref{prop:tool-boundary} applied to the set $\tME(G)$: there exists a point $x\in \partial \tME(G) \cap \MY_k$ such that $x\not\in \MY_j$ for all $j\in [M]\setminus \{k\}$.   But, $x\in \partial \tME(G) \subseteq \MX$, which implies $x\in \bigcup_{l=1}^L\MX_l=\bigcup_{l=1}^L \MY_l$, yielding a contradiction.
\end{proof}

\subsection{Applications}\label{sec:applications}
We mention here some  applications and extensions of the above results about algebraic boundaries of graph elliptopes.

\subsubsection*{Which graph elliptopes are spectrahedra?}
As a first application, we investigate when the elliptope of a graph is a spectrahedron.
Clearly, any graph elliptope $\ME(G)$ is a {\em spectrahedral shadow}, i.e., the projection of a spectrahedron. 
However, $\ME(G)$ is in general not a spectrahedron. As we observe below, the elliptope of a cycle $\ME(C_n)$ ($n\ge 4$) is not a spectrahedron, thus rectifying a claim in \cite[Section 4.2]{Sturmfels2010MultivariateGaussians}. As an application of Theorem~\ref{thm:boundary-cycle-completable}, we shall prove that $\ME(G)$ is a spectrahedron precisely when   $G$ is chordal; moreover, we show that this is equivalent to requiring that the algebraic boundary of the elliptope $\ME(G)$ does not intersect its interior
(Corollary \ref{cor:spectrahedron}).

We begin with recalling some well-known facts about spectrahedra (see, e.g., \cite{BPT-2012} for more information). A spectrahedron is a set of the form
\begin{align}\label{eq:Kd}
\MK=\Big\{x\in \R^n: B(x)= B_0+\sum_{i=1}^n x_i B_i\succeq 0\Big\},
\end{align}
where $B_0,B_i\in \MS^N$ for some $N\ge 1$. 
If there exists $x\in \R^n$ such that $B(x)\succ 0$, then $\MK$ is full-dimensional and its interior is
$\text{\rm int}(\MK) =\{x\in\R^n: B(x)\succ 0\}$. 
Hence, its boundary is given by $\partial \MK=\{x\in\MK: \det(B(x))=0\}$, and thus its algebraic boundary satisfies:
$\partial_a \MK \subseteq\{x\in\C^n : \det(B(x))=0\}$.  Therefore, we have
\begin{align}\label{eq:boundary-spec}
\text{If } \MK \text{ is a spectrahedron, then } \text{\rm int}(\MK)  \cap \partial_a \MK=\emptyset.
\end{align}

Note that the inclusion $\partial \MK \subseteq \{x\in\R^n : \det(B(x))=0\}$
 can be strict. 
As an example, the one-dimensional spectrahedron defined by $\left (\begin{smallmatrix}
1 & x & 0 \\ x & 1 & 0 \\ 0 & 0 & x 
\end{smallmatrix}\right )\succeq 0$ is given by the line segment $[0,1]$. 
Its associated determinantal hypersurface is given by the polynomial equation $x(1-x^2)=0$, while the algebraic boundary is given by $x(1-x)=0$. 

By the above discussion, we obtain that $\ME(C_n)$ is not a spectrahedron, and more generally we can characterize the graphs $G$ for which $\ME(G)$ is a spectrahedron.

\begin{corollary}\label{cor:spectrahedron}
For a graph $G=(V,E)$ the following assertions are equivalent:
\begin{itemize}
\item[(i)] $G$ is chordal.
\item[(ii)] The elliptope $\ME(G)$ is a spectrahedron.
\item[(iii)] $\partial_a\ME(G)\cap \text{\rm int} (\ME(G))=\emptyset$.
\end{itemize}
\end{corollary}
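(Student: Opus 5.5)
We prove the cycle (i) $\Rightarrow$ (ii) $\Rightarrow$ (iii) $\Rightarrow$ (i). For (i) $\Rightarrow$ (ii) we invoke Theorem~\ref{theo:chordal}. If $G$ is chordal, then $\ME(G)=S(G)$. The set $S(G)$ is a spectrahedron, since it is cut out by the block-diagonal linear matrix inequality $\bigoplus_{K\in\MK_G} x[K]\succeq 0$, whose entries are affine in $x$. For (ii) $\Rightarrow$ (iii) we use the general fact \eqref{eq:boundary-spec}. Since $\ME(G)$ is full-dimensional (the origin is interior), writing it as $\{x: B(x)\succeq 0\}$ gives $\partial_a\ME(G)\subseteq V(\det B)$. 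On the other hand, $\det B(x)>0$ on the interior, because the interior of a spectrahedron is $\{B(x)\succ 0\}$ once some point has $B(x)\succ 0$. This needs a short justification: restrict $B$ to the subspace where its kernel is constant, or note that any spectrahedron with nonempty interior admits a representation with $B(x_0)\succ 0$ at an interior point $x_0$.

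The main step is (iii) $\Rightarrow$ (i), which we prove by contraposition. Suppose $G$ is not chordal, and let $C$ be a chordless cycle of length $n\ge 4$; then $C\in\MC_G$. We aim to show that $V(\Gamma_C)$ is an irreducible component of $\partial_a\ME(G)$ and that it meets $\mathrm{int}(\ME(G))$. Since $G$ need not be cycle completable, Theorem~\ref{thm:boundary-cycle-completable} cannot be applied directly. Instead we use Proposition~\ref{prop:tool-boundary}(PC) for $\ME(G)$ itself, together with Proposition~\ref{proposition:VPn} applied through the inclusion $\ME(G)\subseteq\tME(G)$.

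To place $V(\Gamma_C)$ inside $\partial_a\ME(G)$, take the point $x\in\partial\ME(G)$ provided by (PC), which satisfies $\Gamma_C(x)=0$ and $\det x[K]\ne0$, $\Gamma_D(x)\ne 0$ for all other factors. First we check that $x\in\partial\tME(G)$. Since $\ME(G)\subseteq\tME(G)$, it suffices to see that $x$ is not interior to $\tME(G)$. Because $x\in\ME(G)\subseteq\tME(G)$ and $\Gamma_C(x)=0$, we have $\frac1\pi\arccos x_{E(C)}$ on a hyperplane $s\cdot t=2k$; we expect (and will verify) that this forces $x$ to lie on a facet of $\MET(G)$ or to have some $x_e=\pm1$. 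This point is delicate: the argument sketched so far shows only $\partial\ME(G)\subseteq\partial\tME(G)\cup \mathrm{int}(\tME(G))$, so we do not rely on it. The cleaner route is to argue locally. Near $x$, Proposition~\ref{proposition:piboundEn} shows that $\partial\ME(G)$ lies in $\pi_E(\partial\ME_{|V|})$, a semialgebraic set of dimension at most $|E|-1$. By Proposition~\ref{lemma:algboundconvex} every component of $\partial_a\ME(G)$ is a hypersurface. It therefore suffices to show that $\partial\ME(G)$ has local dimension $|E|-1$ near $x$ inside $V(\Gamma_C)$. Since all other candidate factors are nonzero at $x$, and the boundary near $x$ is a hypersurface germ whose Zariski closure must be contained in $\partial_a\ME(G)$, we expect the germ to lie in $V(\Gamma_C)$. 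Justifying that containment is the hardest point. The most natural tool is to restrict to the coordinates of $C$: if the chord-free edges of $C$ can be varied freely while the other coordinates stay in a completable region, then the boundary germ is exactly $\cos$ of a facet of $\MET(C)$. Irreducibility of $V(\Gamma_C)$ then gives $V(\Gamma_C)\subseteq\partial_a\ME(G)$.

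Finally, we exhibit an interior point of $\ME(G)$ lying on $V(\Gamma_C)$. Take the point $y$ of Remark~\ref{rem:strictinclusion}: $y=\cos(\pi a)$ with $a=\frac{n-2}{n}\one$ or $\frac{n-3}{n}\one$ on $E(C)$. This point is in $\mathrm{int}(\ME(C))$ and satisfies $\Gamma_C(y)=0$. Extend $y$ to all of $E$ by a positive definite completion $Y\succ0$ of the cycle entries, which exists because $y\in\pi_{E(C)}(\mathrm{int}\,\ME_n)$ by Proposition~\ref{proposition:piboundEn}; the vertices outside $C$ are given identity rows. Because $C$ is chordless, the entries of $Y$ on $E\setminus E(C)$ are free, so $\pi_E(Y)\in\pi_E(\mathrm{int}\,\ME_{|V|})=\mathrm{int}(\ME(G))$, and $\Gamma_C(\pi_E(Y))=\Gamma_C(y)=0$. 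This contradicts (iii).
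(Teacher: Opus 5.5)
\textbf{There is a genuine gap in (iii)$\Rightarrow$(i).} Your arguments for (i)$\Rightarrow$(ii) and (ii)$\Rightarrow$(iii) are the same as the paper's.

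The claim you set out to prove is that $V(\Gamma_C)$ is an irreducible component of $\partial_a\ME(G)$ for a chordless cycle $C$ of length at least $4$. This holds when $G$ is cycle completable (Theorem~\ref{thm:boundary-cycle-completable}), but it is false in general, and the corollary must cover every non-chordal graph. Take the wheel $W_5=\nabla C_4$. By Proposition~\ref{prop:suspensionboundary} and Example~\ref{ex:EW4}, the components of $\partial_a\ME(W_5)$ are the triangle hypersurfaces $V(\det x[\{0,i,j\}])$ and $V(\Gamma^{\nabla}_{C_4})$, whose equation involves the spoke variables $x_{0i}$. The cylinder $V(\Gamma_4)$ is not among them.

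This does not contradict (PC). The point produced there is $\pi_E(Z\oplus Y)$, so all entries between $V(C)$ and the rest vanish; for $W_5$ this means $x_{0i}=0$, where $\Gamma^{\nabla}_{C_4}$ and $\Gamma_4$ coincide. Property (PC) only exhibits one boundary point on $V(\Gamma_C)$ avoiding the other candidate hypersurfaces. In Theorem~\ref{thm:boundary-cycle-completable} that suffices only because one already knows $\partial_a\tME(G)\subseteq V(P_G)$; no such inclusion holds for $\ME(G)$ in general.

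For the same reason your local heuristic fails: the boundary germ of $\ME(G)$ at that point need not be a cylinder over the cosine image of a facet of $\MET(C)$. Your side claim is also wrong whenever $G$ is not complete. The claim is that $\pi_E(\partial\ME_{|V|})$ has dimension at most $|E|-1$. For $C_4$, every point of $\ME(C_4)$ has a singular completion, because the fiber of completions is compact and convex. Hence $\pi_E(\partial\ME_4)=\ME(C_4)$, which is full-dimensional.

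\textbf{The fix: restrict to a slice instead of asking for the whole cylinder.} Let $\MH=\{x\in\R^E: x_e=0 \text{ for } e\in E\setminus E(C)\}$. Since $C$ is chordless, padding by zeros ($Z\mapsto Z\oplus I$ on the matrix side) gives three identifications:
\begin{itemize}
\item $\ME(C)$ with $\ME(G)\cap\MH$;
\item $\mathrm{int}(\ME(C))$ with $\mathrm{int}(\ME(G))\cap\MH$;
\item $\partial\ME(C)$ with $\partial\ME(G)\cap\MH$.
\end{itemize}
Taking Zariski closures gives $\partial_a\ME(C)\times\{\azero\}=\overline{\partial\ME(G)\cap\MH}^z\subseteq\partial_a\ME(G)$. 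Since $C$ has no $K_4$-minor, Proposition~\ref{prop:aboundarycosMET} gives $V(\Gamma_C)\subseteq\partial_a\ME(C)$. So the point $y$ of Remark~\ref{rem:strictinclusion} satisfies $(y,\azero)\in\partial_a\ME(G)$, even though the full cylinder $V(\Gamma_C)$ need not lie in $\partial_a\ME(G)$. Your last paragraph already shows $(y,\azero)\in\mathrm{int}(\ME(G))$. With this replacement the contraposition closes, and the result is exactly the paper's proof.
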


\begin{proof}
The implication (i) $\Longrightarrow$ (ii) follows using Theorem \ref{theo:chordal}, and  (ii) $\Longrightarrow$ (iii) follows from the above discussion, see relation (\ref{eq:boundary-spec}).
Now we show (iii) $\Longrightarrow$ (i), using contradiction. Assume that $G$ is not chordal. We show that  $\partial_a\ME(G)\cap \text{\rm int} (\ME(G))\ne \emptyset$.  By assumption, $G$ contains a chordless cycle $C$ of length at least 4. Observe that the elliptope $\ME(C)$ can be obtained as an affine section of the elliptope of $G$. Indeed,  for   $y\in \R^{E(C)}$,  consider the point
$x=(y,\azero)\in \R^E$ obtained by padding $y$ with zero entries at the positions of the edges in $E \setminus E(C)$. Then, $y\in\ME(C)$ if and only if $x\in \ME(G)$. Set $\MH=\{x\in\R^E: x_e=0 \text{ for } e\in E\setminus E(C)\}$. The map $y\mapsto  (y,\azero)$ establishes a one-to-one correspondence between $\ME(C)$ and $\ME(G)\cap \MH$,   between   $\text{\rm int} (\ME(C))$ and $\text{\rm int} (\ME(G)) \cap \MH$, and between   $\partial \ME(C)$ and $ \partial \ME(G)\cap \MH$, which implies
\begin{align*}
 \partial_a\ME(C) \, \simeq \,  \overline{\partial\ME(G)\cap \MH}^z \, \subseteq \, \partial_a \ME(G)\cap \MH.
\end{align*}
By Remark~\ref{rem:strictinclusion},  there exists a point $y\in \text{\rm int}( \ME(C))$ such that $\Gamma_C(y)=0$.
 Thus, $y\in\partial_a\ME(C)$. Hence, the point $x=(y,\azero)$ belongs to $\text{\rm int}(\ME(G))\cap \partial_a\ME(G)$, as desired.
\end{proof}

\subsubsection*{The algebraic boundary of the cone $\mathscr{C}(G)$}

\noindent As a direct application of Theorem \ref{thm:boundary-cycle-completable}, one can compute the algebraic boundary of the cone $\mathscr{C}(G)$ in (\ref{eq:coneCG}) when $G$ is cycle completable. It is provided by the analog of the polynomial $P_G$ in (\ref{eq:boundaryG}), where we now replace the cycle polynomial $\Gamma_C$ by the homogeneous one $\Gamma_C^h$ (introduced in Definition \ref{def:hGamma}). For $\mathscr{C}(C_n)$ the next result recovers Theorem 4.8 in \cite{Sturmfels2010MultivariateGaussians}.

\begin{corollary}\label{cor:coneCG}
If $G$ is cycle completable, then the algebraic boundary of the cone $\mathscr{C}(G)$ is the hypersurface defined by the polynomial equation 
\[
\prod_{K\in\MK_G} \det s[K] \cdot \prod_{C\in\MC_G}\Gamma_C^h(s)=0,
\]
 in the variables $s=(s_{kk}: k\in V; s_{ij}: \{i,j\}\in E)$.
\end{corollary}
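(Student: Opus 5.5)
The plan is to transport Theorem~\ref{thm:boundary-cycle-completable} from the affine section $\ME(G)$ to the cone $\mathscr{C}(G)$ through the substitution \eqref{eq:substitution}. Let $U=\{s\in\R^{E\cup V}: s_{kk}>0 \text{ for all } k\in V\}$ and define the map $\psi:U\to\R^E$ by $\psi(s)_{ij}=s_{ij}/\sqrt{s_{ii}s_{jj}}$. This is the quotient by the action of the torus $(\R_{>0})^V$, $s_{ij}\mapsto\lambda_i\lambda_j s_{ij}$, which preserves $\mathscr{C}(G)$, and the slice $\{s_{kk}=1\}$ is a global section identifying $U\cong \R^E\times\R^V_{>0}$. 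On $U$ we have $s\in\mathscr{C}(G)$ if and only if $\psi(s)\in\ME(G)$. Since $\psi$ is a submersion (a trivial bundle in these coordinates), this gives $\partial\mathscr{C}(G)\cap U=\psi^{-1}(\partial\ME(G))$.

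First I would show that the part of $\partial\mathscr{C}(G)$ lying outside $U$ contributes nothing beyond $V(s_{kk})$. The set $\mathscr{C}(G)$ is a full-dimensional convex cone, so Proposition~\ref{lemma:algboundconvex} applies and its algebraic boundary is a hypersurface. Each hyperplane $V(s_{kk})$ meets $\mathscr{C}(G)$ only in the face $\{s_{kk}=0,\ s_{ik}=0\}$, which has codimension at least $2$ because every vertex has an incident edge. Hence $\partial\mathscr{C}(G)\setminus U$ has Zariski closure of codimension at least $2$, and it cannot contribute a component. It follows that $\partial_a\mathscr{C}(G)=\overline{\psi^{-1}(\partial\ME(G))}^z$.

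Next, by Theorem~\ref{thm:boundary-cycle-completable}, $\partial_a\ME(G)=V(P_G)$ decomposes into the irreducible components listed in its proof. For each component $\MY_j=V(f)$, consider the Euclidean boundary piece $B_j=\partial\ME(G)\cap \MY_j$, which is Zariski dense in $\MY_j$. Then $\psi^{-1}(B_j)\cong B_j\times\R^V_{>0}$ is Zariski dense in the hypersurface obtained by pulling $f$ back and clearing the monomial denominator.
\begin{itemize}
\item For a clique $K$, this gives $\det x[K]\circ\psi=\det s[K]/\prod_{k\in K}s_{kk}$, which has numerator $\det s[K]$. For $|K|=2$ this numerator factors as $s_{ii}s_{jj}-s_{ij}^2$, matching the two factors $1\pm x_e$. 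Irreducibility for $|K|\ge3$ follows from that of $\det s[K]$ (as in Lemma~\ref{lem:det-irreducible}).
\item For a cycle $C$, Proposition~\ref{prop:Rrational} and Definition~\ref{def:hGamma} give numerator $\Gamma_C^h$.
\end{itemize}

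The main obstacle is to check that these numerators define the correct irreducible hypersurfaces, with no spurious factor of some $s_{kk}$. For $\Gamma_C^h$, I would argue via the torus action. Any factor of $\Gamma_C^h$ that is not a power of an $s_{kk}$ restricts, on $\{s_{kk}=1\}$, to a factor of $\Gamma_C$. Because $\Gamma_C$ is irreducible (Theorem~\ref{lem:existence}) and $\Gamma^h_C$ is multihomogeneous (Theorem~\ref{thm:multidegGammah}), the polynomial $\Gamma_C^h$ equals a monomial in the $s_{kk}$ times one irreducible factor. That monomial is trivial, because Proposition~\ref{prop:bivariatedegree} gives exactly $s_{kk}^{2^{n-3}}$ as the denominator, so some term of $\Gamma_C^h$ omits $s_{kk}$. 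Taking Zariski closures of the union over $j$ then yields the claimed hypersurface.
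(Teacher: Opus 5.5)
Your proof is correct. It follows the paper's overall strategy: transport Theorem~\ref{thm:boundary-cycle-completable} through the substitution \eqref{eq:substitution}. It differs in two places, and in the second you close a step the paper leaves implicit.

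\begin{itemize}
\item \textbf{The locus $s_{kk}=0$.} The paper observes that $\mathscr{C}(G)\cap\{s_{kk}=0\}\subseteq\{\det s[K]=0\}$ for any maximal clique $K\ni k$. So these boundary points already lie on the claimed hypersurface, and no purity result for the cone is needed. You instead show this locus has codimension at least $2$, using that $G$ has no isolated vertices. You then discard it because $\partial_a\mathscr{C}(G)$ is pure of codimension one by Proposition~\ref{lemma:algboundconvex}. This is equally valid and shows these points contribute no component at all.

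\item \textbf{The reverse inclusion.} The paper says the result ``follows after applying the change of variables.'' This needs each pulled-back numerator to define an irreducible hypersurface with no spurious component $V(s_{kk})$. Your multigrading argument supplies exactly this:
  \begin{itemize}
  \item factors of the $\deg_n$-homogeneous $\Gamma_C^h$ (Theorem~\ref{thm:multidegGammah}) are multihomogeneous;
  \item such a factor either dehomogenizes to a factor of the irreducible $\Gamma_C$ or is a monomial in the $s_{kk}$;
  \item an $s_{kk}$ factor is excluded by Proposition~\ref{prop:bivariatedegree}, since distinct monomials of $\Gamma_C$ go to distinct monomials of $\Gamma_C^h$ (the exponents of the $s_{ij}$, $i\neq j$, are preserved), so no cancellation occurs.
  \end{itemize}
  The same argument applied to $\det s[K]$ handles cliques with $|K|\ge 3$.
\end{itemize}

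Two small repairs to the write-up.

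\begin{itemize}
\item \textbf{Cliques with $|K|=2$.} The individual pullbacks of $1\pm x_e$ are not rational functions, and $s_{ii}s_{jj}-s_{ij}^2$ does not factor. Rather, the two components $V(1-x_e)$ and $V(1+x_e)$ both pull back to the single irreducible quadric $V(s_{ii}s_{jj}-s_{ij}^2)$. Your step ``for each component $\MY_j=V(f)$'' should treat this pair together.

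\item \textbf{Density claims.} The Zariski density of $B_j$ in $\MY_j$ needs a line: $\overline{B_j}\subseteq\MY_j$, the $\overline{B_j}$ together cover $\bigcup_j\MY_j$, and the $\MY_j$ are distinct and irreducible. The density of $\psi^{-1}(B_j)$ in the pulled-back hypersurface follows from a dimension count only once irreducibility of the numerator is known. Your final paragraph should therefore logically come before that step.
\end{itemize}
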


\begin{proof}
We use the analog of Proposition \ref{proposition:piboundEn} stating that ${\rm int}(\mathscr{C}(G)) = \pi_{V \cup E}({\rm int}({\cal S}_+^n))$.
Assume that $s\in \R^{V\cup E}$ belongs to $\mathscr{C}(G)$. Then, $s\in \partial \mathscr{C}(G)$ precisely when $s_{kk}=0$ for some $k\in V$, or the point $x\in\R^E$ from (\ref{eq:substitution}) belongs to $\partial \ME(G)$. 
By Theorem \ref{thm:boundary-cycle-completable}, we have that $\partial_a(\ME(G))=V(P_G)=V(\prod_{K\in\MK_G} \det x[K] \cdot\prod_{C\in\MC_G}\Gamma_C(x))$. The result now follows after applying the change of variables and using the definition of $\Gamma_C^h$. We also use the fact that $\mathscr{C}(G)\cap\{s: s_{kk}=0\} \subseteq \mathscr{C}(G)\cap\{s: \det s[K]=0\}$ if $K\in\MK_G$ contains the vertex $k\in V$.
\end{proof}

\subsubsection*{The elliptope of a suspension graph }

As another application, we indicate how to compute the algebraic boundary for the elliptope of a  suspension graph. 
Given a graph $G=(V=[n],E)$, its {\em suspension graph}  $\nabla G$ is  obtained by adding a new node to $G$, called an {\em apex node} and denoted by $0$, which is adjacent to all nodes of $G$. 
So, $V(\nabla G)=V\cup\{0\}$ and $E(\nabla G)= E\cup \{\{0,i\}: i\in V\}$.
As an example,   the wheel graph  $W_k=\nabla C_{k-1}$ on $k$ vertices is obtained by adding an apex node to the cycle $C_{k-1}$. So, $W_4=K_4$ is cycle completable, but  the wheel $W_k$ is not cycle completable for any $k\ge 5$, see Theorem \ref{theo:cycle-completable}.

To compute the algebraic boundary of the elliptope of a suspension graph $\nabla G$ in terms of the algebraic boundary of the elliptope of $G$, we proceed as in \cite[Sec. 4.3]{Sturmfels2010MultivariateGaussians}. We first pass to the cones $\MCscr (\nabla G)$ and $\MCscr (G)$ to allow for arbitrary diagonal entries. We use coordinates $s = (s_{ii}, i \in V(\nabla G);  s_{ij}, (i,j) \in E(\nabla G)) \in \R^{V(\nabla G)\cup E(\nabla G)}$. 
The next observation will be useful.
\begin{lemma} \label{lem:s00}
Let $G = (V = [n], E)$ be any graph and $\nabla G$ its suspension graph. The semialgebraic set $\MCscr(\nabla G) \cap \{ s: s_{00} = 0 \}$ has codimension at least $n + 1$ in $\mathbb{R}^{V(\nabla G) \cup E(\nabla G)}$. 
\end{lemma}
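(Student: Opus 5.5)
The plan is to observe that $s_{00}=0$ forces all the edge coordinates $s_{0i}$ at the apex to vanish. This places the set inside a linear subspace of codimension $n+1$.

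Let $s\in \MCscr(\nabla G)$ with $s_{00}=0$. By definition (\ref{eq:coneCG}) there is a matrix $X\in \MS^{n+1}_+$, indexed by $V(\nabla G)=\{0\}\cup[n]$, with $X_{00}=s_{00}=0$ and $X_{0i}=s_{0i}$ for all $i\in V$. For each $i\in V$ the $2\times 2$ principal submatrix of $X$ on $\{0,i\}$ is positive semidefinite, so its determinant $X_{00}X_{ii}-X_{0i}^2=-X_{0i}^2$ is nonnegative. Hence $X_{0i}=0$, and therefore $s_{0i}=0$ for every $i\in V$. Since $\{0,i\}\in E(\nabla G)$ for every $i$, all $n$ apex-edge coordinates are forced to zero.

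It follows that
\[
\MCscr(\nabla G)\cap\{s: s_{00}=0\}\subseteq \{s\in\R^{V(\nabla G)\cup E(\nabla G)}: s_{00}=0,\ s_{0i}=0 \text{ for } i\in V\}.
\]
The right-hand side is a linear subspace cut out by $n+1$ independent coordinate equations, so it has codimension exactly $n+1$. A semialgebraic subset of a linear subspace has dimension at most that of the subspace, so the codimension of our set is at least $n+1$. If needed, one can note that equality holds: the remaining coordinates $(s_{kk}, s_{ij})$ range over $\MCscr(G)$, which is full-dimensional. Only the inequality is claimed, however.

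There is no real obstacle here. The only point requiring care is that every apex edge $\{0,i\}$ actually belongs to $E(\nabla G)$, so that each forced vanishing $X_{0i}=0$ translates into an independent coordinate constraint on $s$.
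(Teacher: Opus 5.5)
Your proof is correct and matches the paper's: both use the nonnegativity of the principal $2\times 2$ minors $s_{00}s_{ii}-s_{0i}^2$ to force $s_{0i}=0$ when $s_{00}=0$, which places the set inside the coordinate subspace $s_{00}=s_{01}=\cdots=s_{0n}=0$ of codimension $n+1$. Your aside that equality of codimension actually holds is also correct, though the paper does not need it.
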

\begin{proof}
If $s \in \MCscr(\nabla G)$, then $s$ admits a positive semidefinite  completion. In particular, the specified $2 \times 2$ minors $s_{00}s_{ii}-s_{0i}^2$ are nonnegative for $i = 1, \ldots, n$. If, in addition to $s \in \MCscr(\nabla G)$, the entry $s_{00}$ is zero, we must have $s_{0i} = 0$ for $i = 1, \ldots, n$. Therefore, $\MCscr(\nabla G) \cap \{ s: s_{00} = 0 \}$ is contained in the coordinate subspace $s_{00}= s_{01} = \cdots = s_{0n} = 0$ of codimension $n + 1$. 
\end{proof}
Since the entries  $s_{0i}$ are specified for all $i\in V = V(G)$, the key idea is to check membership  of $s$ in $\MCscr(\nabla G)$ by taking the Schur complement w.r.t. the entry $s_{00}$.
Restricted to the entries indexed by $V \cup E$, the Schur complement is represented by a map $\varphi: \mathbb{R}^{V(\nabla G) \cup E(\nabla G)} \rightarrow \mathbb{R}^{V \cup E}$:
\begin{align}\label{eq:tphis}
t=\varphi(s)=\Big(s_{ij}- {s_{0i}s_{0j}\over s_{00}}\Big)_{\{i,j\}\in V\cup E}
\end{align}
(setting $\tfrac{s_{0i}s_{0j}}{ s_{00}}=0$ if $s_{00}=0$).

\begin{proposition} \label{prop:suspension}
Let $G$ be any graph. If the algebraic boundary of $\MCscr(G)$ is given by the polynomial $B_G(t) = 0$, then the algebraic boundary of $\MCscr(\nabla G)$ is the zero locus of the numerator of the rational function $B_G(\varphi(s))$. Moreover, each irreducible factor $f(t)$ of $B_G(t)$ pulls back to an irreducible factor of $B_G(\varphi(s))$, namely the numerator of $f(\varphi(s))$. 
\end{proposition}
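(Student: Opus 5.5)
The idea is to view the Schur complement as a change of coordinates on the open set $U=\{s_{00}\neq 0\}$. Define
\[
\Phi:U\to \{s_{00}\ne 0\}\times\R^{\{0i\}}\times \R^{V\cup E},\qquad s\mapsto \big(s_{00},(s_{0i})_{i\in V},\varphi(s)\big).
\]
It is a biregular isomorphism of $U$ onto its image, with inverse $s_{ij}=t_{ij}+s_{0i}s_{0j}/s_{00}$. It extends to an isomorphism of the complex open sets $\{s_{00}\neq 0\}$, so it identifies Zariski closures taken inside these open sets.

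\textbf{Step 1: membership is preserved.} For $s\in U$, the standard Schur complement lemma gives that $s\in\MCscr(\nabla G)$ if and only if $s_{00}>0$ and $\varphi(s)\in\MCscr(G)$. The completion of $s$ only needs to fill the entries indexed by non-edges of $G$, and the row and column of node $0$ are fully specified. Given a PSD completion $T$ of $\varphi(s)$, the matrix $\begin{pmatrix}s_{00}& s_0^T\\ s_0 & T+s_0s_0^T/s_{00}\end{pmatrix}$ is PSD and completes $s$. Conversely, the Schur complement of any PSD completion of $s$ completes $\varphi(s)$.

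Hence, inside $U$, we have $\MCscr(\nabla G)\cap U=\Phi^{-1}\big(\R_{>0}\times\R^n\times\MCscr(G)\big)$. Because $\Phi$ is a homeomorphism, the Euclidean boundary of $\MCscr(\nabla G)$ within $U$ is $\Phi^{-1}(\R_{>0}\times\R^n\times\partial\MCscr(G))$. The face $s_{00}=0$ is handled by Lemma~\ref{lem:s00}: the part of $\MCscr(\nabla G)$ with $s_{00}=0$ has codimension at least $n+1\ge 2$. It therefore contributes no codimension-one component to $\partial_a\MCscr(\nabla G)$, which by Proposition~\ref{lemma:algboundconvex} is a hypersurface, since the cone is convex with nonempty interior. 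So $\partial_a\MCscr(\nabla G)$ is the Zariski closure of $\partial\MCscr(\nabla G)\cap U$.

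\textbf{Step 2: pull back the closure.} The Zariski closure of $\R_{>0}\times\R^n\times\partial\MCscr(G)$ is $\C^{n+1}\times V(B_G)$, since a product of semialgebraic sets has closure equal to the product of the closures. Pulling back along the isomorphism $\Phi$, the closure of $\partial\MCscr(\nabla G)\cap U$ within $\{s_{00}\ne0\}$ is $V(B_G\circ\varphi)\cap\{s_{00}\neq 0\}$. Its closure in the full space is the zero locus of the numerator $N$ of $B_G(\varphi(s))$, with all factors of $s_{00}$ removed. Writing $N=s_{00}^{D}B_G(\varphi(s))$ with $D$ minimal, $N$ is not divisible by $s_{00}$, so $V(N)$ has no component contained in $\{s_{00}=0\}$.

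\textbf{Step 3: irreducibility.} This is the main point requiring care. Let $f$ be an irreducible factor of $B_G$ and let $N_f$ be the numerator of $f(\varphi(s))$, again stripped of powers of $s_{00}$. The ring isomorphism $\Phi^*:\C[s_{00}^{\pm1},s_{0i},t]\to\C[s_{00}^{\pm1},s_{0i},s_{ij}]$ sends $f$, which is irreducible in the left ring because the variables $s_{00},s_{0i}$ do not occur in it, to an irreducible element of the localized ring. An element of $\C[s]$ not divisible by $s_{00}$ that is irreducible in $\C[s][s_{00}^{-1}]$ is irreducible in $\C[s]$ up to units. This holds because any factorization in $\C[s]$ has one factor a unit of the localization, hence a monomial $c\,s_{00}^k$, and $k=0$ since $N_f$ is not divisible by $s_{00}$. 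So $N_f$ is irreducible, and multiplicativity of numerators gives $N=\prod N_f$.

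The step I expect to be the main obstacle is justifying that no boundary component lies on $\{s_{00}=0\}$. Lemma~\ref{lem:s00} handles this by the codimension count combined with Proposition~\ref{lemma:algboundconvex}. A secondary point is checking $\partial\MCscr(\nabla G)\cap U\neq\emptyset$ near each component, which follows because $\Phi$ is surjective onto $\R_{>0}\times\R^n\times\R^{V\cup E}$.
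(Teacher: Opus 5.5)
Your proposal is correct and follows the same route as the paper: the Schur-complement map (the paper's $\psi$, your $\Phi$) is an isomorphism on $\{s_{00}\neq 0\}$ carrying $\MCscr(\nabla G)$ onto $\R_{>0}\times\R^V\times\MCscr(G)$, so components of $\partial_a\MCscr(G)$ pull back to components not contained in $\{s_{00}=0\}$, and Lemma~\ref{lem:s00} together with Proposition~\ref{lemma:algboundconvex} rules out any component inside $\{s_{00}=0\}$. You add details the paper leaves implicit: relative Zariski closures on the open set, the condition $s_{00}>0$ in the Schur-complement equivalence, and a localization argument for irreducibility in place of the paper's geometric remark that an irreducible variety pulls back to an irreducible one under an isomorphism.
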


\begin{proof}
Write $\C^{V(\nabla G)\cup E(\nabla G)}$ as $\C \times \C^V\times \C^{V\cup E}$, where we use the coordinates $s=(s_{00}; s_{0i}, i\in V; s_{ij}, \{i,j\}\in V\cup E)$ on the individual factors. Consider the map
\begin{align*}
\psi: &\  \C^*\times \C^V \times \C^{V\cup E}  \to  \C^*\times \C^V\times \C^{V\cup E} \\
&\  s \mapsto   (s_{00}; s_{0i}, i\in V; \varphi(s)),
\end{align*}
where $\mathbb{C}^* = \mathbb{C} \setminus \{ 0 \}$ and $\varphi(s)$ is as in (\ref{eq:tphis}). The map $\psi$ is an isomorphism of affine varieties. A standard property of the Schur complement implies that, if $s_{00} \neq 0$, then $\varphi(s) \in \MCscr(\nabla G) \Longleftrightarrow s \in \MCscr(G)$. Therefore, the image of $\mathscr{C}(\nabla G) \setminus \{s: s_{00} = 0 \}$ under $\psi$ is
\[ \psi(\mathscr{C}(\nabla G) \setminus \{s: s_{00} = 0 \}) = \R_{>0}\times \R^V\times \mathscr{C}(G). \]
An irreducible component ${\cal Y}_j$ of the algebraic boundary of $\MCscr(G)$ gives an irreducible component $\overline{\psi^{-1}({\cal Y}_j)}^z$ of the algebraic boundary of $\MCscr(\nabla G)$, not contained in $s_{00} = 0$. Here the Zariski closure is taken in $\mathbb{C} \times \mathbb{C}^V \times \mathbb{C}^{V \cup E}$. If ${\cal Y}_j$ is defined by $f(t) = 0$, then $\psi^{-1}({\cal Y}_j)$ is defined by $f(\varphi(s)) = 0$, and its Zariski closure is defined by the numerator of this rational function. The only components of the algebraic boundary of $\MCscr(\nabla G)$ that are not obtained in this way must be contained in $\{ s \, : \, s_{00} = 0 \}$. By Proposition \ref{lemma:algboundconvex} and Lemma \ref{lem:s00}, there is no such~component. 
\end{proof}

\begin{proposition}\label{prop:suspensionboundary}
    Assume  $G=(V,E)$ is cycle completable. 
 The algebraic boundary of the cone $\MCscr (\nabla G)$ is    given by the zero locus of the polynomial
 \[
 \prod_{K\in \MK_G} \det s[\nabla K] \cdot \prod_{C\in\MC_G} \Gamma^h_C(\tilde s)\in \R[s_{00};s_{0i}, i\in V; s_{ij}, \{i,j\}\in V\cup E]
 \]
where we set $\tilde s_{ij}= s_{ij}s_{00}-s_{0i}s_{0j}$ for $\{i,j\}\in V\cup E$ and where 
$\Gamma^h_C$ is the homogeneous cycle polynomial from Definition \ref{def:hGamma}.
Therefore, the     
algebraic boundary of the elliptope   $\ME(\nabla G)$ is given by the zero locus of the polynomial 
    \begin{align}\label{eq:boundaryNablaG}
P^{\nabla}_G(x)= \prod _{K\in\MK_G} \det x[\nabla K] \cdot \prod_{C\in \MC_G} \Gamma^{\nabla}_C(x)\in \R[x_{ij}: \{i,j\}\in E(\nabla G)],
\end{align}
where $\Gamma^{\nabla}_C(x)=\Gamma^h_C(\tilde x)$, with $\tilde x_{ii}=1-x_{0i}^2$ for $i\in V$ and $\tilde x_{ij}=x_{ij}-x_{0i}x_{0j}$ for $\{i,j\}\in   E$. \end{proposition}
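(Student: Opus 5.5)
The plan is to combine Corollary~\ref{cor:coneCG} with Proposition~\ref{prop:suspension}, and then dehomogenize to pass from the cone to the elliptope.

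\emph{Cone statement.} Since $G$ is cycle completable, Corollary~\ref{cor:coneCG} gives $\partial_a \MCscr(G)=V(B_G)$ with
\[
B_G(t)=\prod_{K\in\MK_G}\det t[K]\cdot\prod_{C\in\MC_G}\Gamma^h_C(t).
\]
Proposition~\ref{prop:suspension} then says that $\partial_a\MCscr(\nabla G)$ is the zero locus of the numerators of the pullbacks $f(\varphi(s))$, one for each irreducible factor $f$ of $B_G$. It remains to identify these numerators.

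\begin{itemize}
\item \emph{Clique factors.} By the Schur complement determinant formula,
\[
\det s[\nabla K]=s_{00}\det\bigl(\varphi(s)[K]\bigr)=s_{00}^{1-|K|}\det\bigl(\tilde s[K]\bigr),
\]
where $\nabla K=K\cup\{0\}$ and $\tilde s_{ij}=s_{00}s_{ij}-s_{0i}s_{0j}$, so that $\varphi(s)=\tilde s/s_{00}$ entrywise. Hence the numerator of $\det\varphi(s)[K]$ is $\det s[\nabla K]$, up to a power of $s_{00}$.
\item \emph{Cycle factors.} $\Gamma_C^h$ is homogeneous of degree $|C|2^{|C|-3}$ by Theorem~\ref{thm:degGammah}. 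Therefore
\[
\Gamma^h_C(\varphi(s))=s_{00}^{-|C|2^{|C|-3}}\,\Gamma^h_C(\tilde s).
\]
\end{itemize}

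In both cases we must check that no power of $s_{00}$ divides the polynomial, so that it really is the numerator. For $\det s[\nabla K]$ this is clear: setting $s_{00}=0$ leaves a nonzero polynomial. For $\Gamma^h_C(\tilde s)$, setting $s_{00}=0$ gives $\Gamma^h_C(-s_{0i}s_{0j})$. Using the multigrading of Theorem~\ref{thm:multidegGammah} with $\lambda_i=s_{0i}$, the diagonal entries $-s_{0i}^2$ and edge entries $-s_{0i}s_{0j}$ turn this into $\prod_i s_{0i}^{2^{|C|-2}}\cdot\Gamma^h_C(-\mathbf 1)$, up to the homogeneity sign. This is nonzero provided the all-ones specialization ($x=\mathbf 1$ up to sign) is not a root of $\Gamma_C$, which can be checked via Corollary~\ref{cor:univleadingterm} or Remark~\ref{rem:cosLn}. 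I expect this verification, and the irreducibility bookkeeping that Proposition~\ref{prop:suspension} already provides, to be the main technical point. The safe fallback is to say ``the numerator equals $\Gamma^h_C(\tilde s)$ divided by any power of $s_{00}$ it contains'', which still yields the stated zero locus away from $s_{00}=0$. Since components inside $\{s_{00}=0\}$ are excluded by Lemma~\ref{lem:s00}, the zero locus is correct up to that hyperplane. Equality of varieties then requires only that $s_{00}\nmid$ each factor, which is exactly what the specialization above checks.

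\emph{Elliptope statement.} Intersect with the affine subspace $s_{ii}=1$ for $i\in V(\nabla G)$, mirroring the passage between $\MCscr$ and $\ME$ used in Corollary~\ref{cor:coneCG}. With $s_{00}=1$, the polynomials become $\det x[\nabla K]$ and $\Gamma^h_C(\tilde x)$, where
\[
\tilde x_{ii}=1-x_{0i}^2,\qquad \tilde x_{ij}=x_{ij}-x_{0i}x_{0j}.
\]
To justify that the algebraic boundary of the section is the section of the algebraic boundary, argue directly as in Proposition~\ref{prop:suspension} with $s_{00}=1$ fixed. For $x\in(-1,1)^{V}$ in the $x_{0i}$-coordinates, the Schur complement map sends $\ME(\nabla G)$ to a point of $\MCscr(G)$ with positive diagonal $1-x_{0i}^2$, and this correspondence preserves interior and boundary. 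Hence
\[
\partial_a\ME(\nabla G)=\overline{\bigl(\text{preimage of }\partial_a\MCscr(G)\bigr)}^z,
\]
together with possibly the hyperplanes $x_{0i}=\pm1$. These hyperplanes are absorbed into the factors $\det x[\nabla K]$, since any maximal clique $\nabla K$ containing $\{0,i\}$ has a determinant that vanishes there on $\ME$.

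Each pulled-back factor stays irreducible because it is the restriction to the slice of an irreducible polynomial under an isomorphism of open sets: $x_{0i}$ is free and the map $x\mapsto\tilde x$ is invertible where $x_{0i}^2\neq1$. Proposition~\ref{lemma:algboundconvex} then ensures that no spurious lower-dimensional pieces occur, giving $\partial_a\ME(\nabla G)=V(P^\nabla_G)$.
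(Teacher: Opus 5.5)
Your route is the paper's: Corollary~\ref{cor:coneCG} combined with Proposition~\ref{prop:suspension}, the Schur complement for the clique factors, and homogeneity for the cycle factors. Your total-degree scaling by $s_{00}^{-1}$ gives the same identity $\Gamma^h_C(\tilde s)=s_{00}^{d}\,\Gamma^h_C(t)$ that the paper obtains from the multigrading with $\lambda_i=s_{00}^{1/2}$.

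The problem is the extra step you add to certify that $\Gamma^h_C(\tilde s)$ is the numerator. Your reduction is correct: at $s_{00}=0$ one gets $\Gamma^h_C(-vv^T)$ with $v=(s_{0i})$, which equals $\pm\prod_i s_{0i}^{2^{|C|-2}}\,\Gamma_C(\mathbf 1)$. But $\Gamma_C(\mathbf 1)=0$, so the check you say ``can be checked'' fails. By Corollary~\ref{cor:univleadingterm}, setting $x_e=1$ for $e\neq e_0$ makes every $z_{0,\delta}=1$, hence every $r_\delta=1$ and $\Gamma_C=(x_{e_0}-1)^{2^{|C|-2}}$. Equivalently, $\mathbf 1=\cos(\mathbf 0)$, and $\mathbf 0$ lies on the hyperplane $\sum_e t_e=0$ of Remark~\ref{rem:cosLn}. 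So your own computation shows $\Gamma^h_C(\tilde s)|_{s_{00}=0}\equiv 0$, i.e.\ $s_{00}$ divides $\Gamma^h_C(\tilde s)$. This is the same phenomenon you handled correctly for cliques. For $|C|=3$, where $\Gamma^h_3(s)=-\det s[K]$, your Schur identity gives $\Gamma^h_3(\tilde s)=-\det\tilde s[K]=-s_{00}^{2}\det s[\nabla K]$.

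Consequently, the numerator of $\Gamma^h_C(\varphi(s))$ is $\Gamma^h_C(\tilde s)$ with a positive power of $s_{00}$ divided out. Whenever $\MC_G\neq\emptyset$, the zero set of the displayed cone polynomial contains the hyperplane $\{s_{00}=0\}$. By Lemma~\ref{lem:s00} and Proposition~\ref{prop:suspension}, that hyperplane is not contained in $\partial_a\MCscr(\nabla G)$. So your closing claim (``equality of varieties then requires only that $s_{00}\nmid$ each factor, which is exactly what the specialization above checks'') is false. The cone half of the statement can only be proved in the form of your fallback, with the $s_{00}$-power removed from each $\Gamma^h_C(\tilde s)$.

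The paper's proof passes from $\Gamma^h_C(\tilde s)=s_{00}^d\Gamma^h_C(t)$ straight to ``pulls back to the component $\Gamma^h_C(\tilde s)=0$'' without this check, so it shares the oversight; you were right to look. The elliptope half is unaffected, since $s_{00}=1$ there.

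In your direct elliptope argument you do need the analogous check that $\Gamma^\nabla_C$ has no factor $1\pm x_{0i}$, because your isomorphism only covers $x_{0i}^2\neq 1$. This check goes through. At $x_{0i}=\pm1$ one has $\tilde x_{ii}=0$. The terms of $\Gamma^h_C$ free of $s_{ii}$ are those of maximal bivariate degree $2^{|C|-2}$ in the two edge variables at $i$, and these exist by Proposition~\ref{prop:bivariatedegree}. Finally, the remaining substitution $x\mapsto\tilde x$ is dominant onto the other coordinates, so the restriction stays nonzero.
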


\begin{proof}
    We combine Corollary \ref{cor:coneCG} and Proposition \ref{prop:suspension}.
        By Corollary \ref{cor:coneCG},  the algebraic boundary of $\mathscr{C}(G)$ is the vanishing set of the polynomial \[ \prod_{K\in\MK_G} \det t[K] \cdot \prod_{C\in\MC_G}\Gamma_C^h(t)\in \R[t_{ij}: \{i,j\}\in V\cup E]. \] 
        By Proposition \ref{prop:suspension}, we must compute the numerator of $f(\varphi(s))$ for each irreducible factor $f(t)$ of this polynomial. 
    First, consider a factor $f(t)= \det t[K]$ for a clique  $K\in\MK_G$. Then, $\nabla K=K\cup \{0\}$ is a clique of $\nabla G$ and   $\det s[\nabla K]=s_{00}\cdot \det t[K]$ (by taking a Schur complement). So, the component  $\det t[K]=0$ of $\partial_a\MCscr (G)$ contributes the component $\det s[\nabla K]=0$ of~$\partial _a\MCscr(\nabla G)$.
    
    Second, consider a factor $f(t)= \Gamma_C^h(t)$ for a cycle $C\in\MC_G$. 
    Using Theorem \ref{thm:multidegGammah} (and the comment therefater with all $\lambda_i=s_{00}^{1/2}$), we obtain that 
    $\Gamma^h_C(\tilde s)= s_{00}^d\cdot \Gamma_C^h(t)$, where $d=|C|\cdot 2^{|C|-3}$ and $\tilde s_{ij}=s_{00}s_{ij}-s_{0i}s_{0j}$ for $\{i,j\}\in V\cup E$. So, the component  $\Gamma_C^h(t)=0$ of $\partial_a \MCscr (G)$ pulls back to the component  $\Gamma^h_C(\tilde s)=0$ of $\partial_a\MCscr (\nabla G)$. 
\end{proof}

\begin{example}\label{ex:EW4}
The wheel graph  $W_k=\nabla C_{k-1}$ on $k$ vertices is the suspension graph of the cycle $C_{k-1}=(1,2,\ldots, k-1)$, obtained by adding an apex node $0$ to $C_{k-1}$. By Proposition~\ref{prop:suspensionboundary},   the algebraic boundary of its elliptope $\ME(W_{k})$ is given by the polynomial
    \[
   \Gamma^{\nabla}_{C_{k-1}}(x) \cdot    \prod_{\{i,j\}\in E_{k-1}} \det x[\{0,i,j\}].
    \]
   Comparing with  the polynomial $P_{W_{k}}(x)=\Gamma_{C_{k-1}}(x) \cdot \prod _{\{i,j\}\in E_{k-1}}\det x[\{0,i,j\}]$, i.e., the algebraic boundary of $\tilde \ME(W_k)$, it is clear that these two  polynomials   only differ in one  factor. 
In other words, we have $\partial_a\ME(W_k)\ne V(P_{W_k})$ for any $k\ge 5$.
        
    To give a concrete example, consider the wheel $W_5$. The polynomial $\Gamma_4^{\nabla}$ has degree $16$ and depends on all variables $\{x_e:e\in E(W_5)\}$, while $\Gamma_4$ has degree $8$ and only depends on the variables $x_e$ indexed by $e\in E(C_4)$. Figure \ref{fig:EW4} on the left shows the intersections of  $V(\Gamma_4)$ (in yellow) and $V(\Gamma_4^{\nabla})$ (in red) with the three-dimensional affine space defined by $x_{01}=x_{02}=x_{03}=x_{04}=x_{14}=\tfrac{1}{2}$. The first slice is a Lissajous hypersurface, while the second one is the same algebraic variety scaled by a factor smaller than $1$ and tangent to the former.
    The right part of Figure \ref{fig:EW4} compares the 2-dimensional slices of $\ME(W_5)$ (in red) and of $\widetilde\ME(W_5)$ (in yellow), where the slice is now obtained by setting  $x_{01}=x_{02}=x_{03}=x_{04}=x_{14}=\tfrac{1}{2}$ and $x_{12}=x_{23}$.
    \begin{figure}
    \centering
    \includegraphics[height = 5cm]{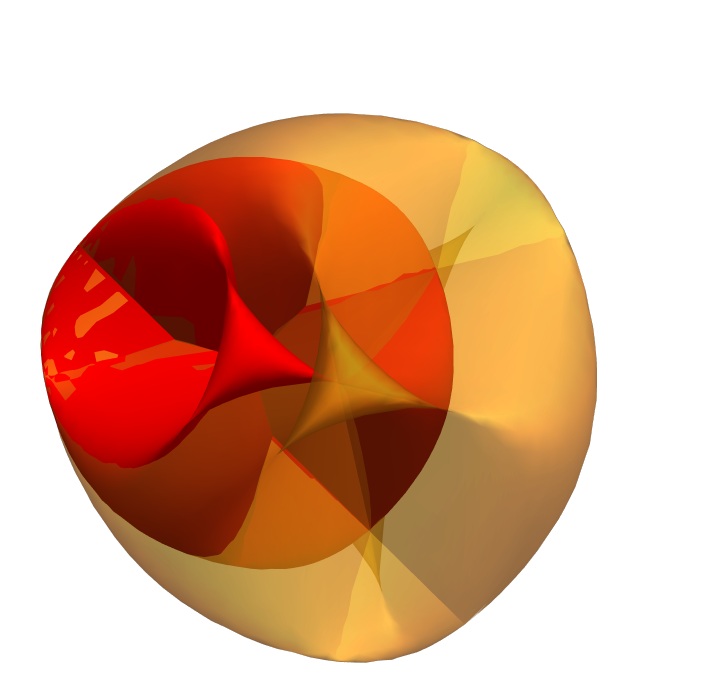} \quad  \quad \quad \quad 
    \includegraphics[height = 5cm]{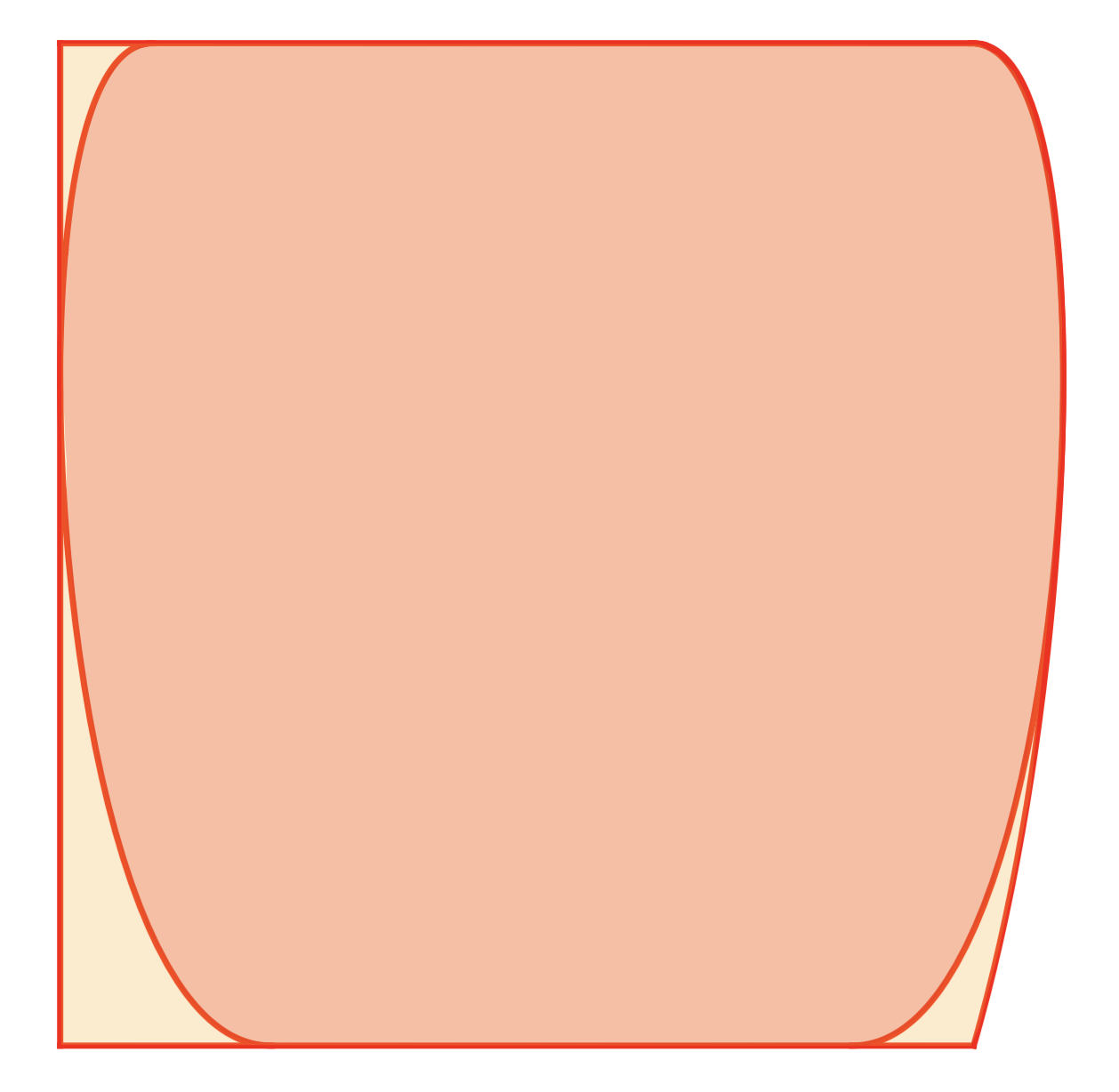}
    \caption{On the left, the 3-D slices of $V(\Gamma_4)$ (in yellow) and  $V(\Gamma_4^{\nabla})$ (in red) at $x_{01}=\dotsc=x_{04}=x_{14}=\tfrac{1}{2}$; on the right, the 2-D slices of $\ME(W_5)$ (in red) and its superset $\widetilde\ME(W_5)$ (in yellow) at $x_{01}=\dotsc=x_{04}=x_{14}=\tfrac{1}{2}$ and $x_{12}=x_{23}$ (discussed in Example \ref{ex:EW4}).}
    \label{fig:EW4}
    \end{figure}
    \end{example}
   
As just observed in Example \ref{ex:EW4}, we have $\partial_a\ME(W_k)\ne V(P_{W_k})$ for any $k\ge 5$. In \cite[Example 4.10]{Sturmfels2010MultivariateGaussians} the authors observe that $\partial_a \ME(H)\ne V(P_H)$ if $H=\widetilde W_4$ is the graph on 5 nodes obtained by splitting one vertex in $W_4=K_4$. We expect that the same holds for all iterated splittings of $W_k$ ($k\ge 5$) and $\widetilde W_4$, and that this can be used to show the converse implication: if $\partial_a\ME(G)=V(P_G)$ then $G$ is cycle completable. We leave this as future work.

\subsection{ Proof of Proposition \ref{prop:tool-boundary}}\label{sec:proof-prop}

Here, we show Proposition \ref{prop:tool-boundary}. Recall that $V(\MF)=\{x\in\C^n: f(x)=0 \text{ for } f\in\MF\}$ is the complex affine variety defined by $\MF\subseteq \R[x_1,\ldots,x_n]$. We write $V_\R(\MF)=V(\MF)\cap\R^n$ for its real points, i.e., the set of all common real roots of $\MF$. We begin with showing the result for the  elliptope $\ME(G)$. After that, we will derive the result for the superset $\tME(G)$ from it.

 Let us fix some notation.
Let $G=(V,E)$ be a graph. 
Let $V=U\cup W$, where $W=V\setminus U$ and $U=K$ for some $K\in \MK_G$, or $U=V(C)$ for some $C\in\MC_G$, depending whether we wish to prove  property (PK) or   (PC). Let $H=G[W]=(W,E(H))$ denote the subgraph of $G$ induced by $W$. 
Our proof strategy is to construct a matrix $X\in \ME_V$ with block-form
 \begin{align}\label{eq:ZplusY}
 X=Z\oplus Y=\begin{pmatrix} Z & \azero\cr \azero & Y\end{pmatrix},
 \end{align}
where $Z\in\ME_U$, $Y\in\ME_W$ are indexed  by $U,W,$ respectively. Then, the point $x=\pi_E(X)$ belongs to $\ME(G)$ and the goal is to select the matrices $Z,Y$ in such a way that $x$ satisfies property (PK) or (PC). 
We treat these two cases separately. 

\smallskip\noindent
\underline{We first show that (PK) holds.} Consider a clique $K \in \MK_{G}$, set $p=|K|$ and $U=K$.
Define the matrix $Z={1\over p-1}(pI_p-J_p)\in \ME_U$, where $J_p$ is the all-ones $p\times p$ matrix. Then, $Z\succeq 0$ with zero eigenvalue for the all-ones eigenvector and eigenvalue $p\over p-1$ with multiplicity $p-1$; hence, each strict principal submatrix of $Z$ is positive definite.
By construction,   $x[K]=Z$, so $\det x[K]=0$.
We now need to select the matrix $Y\in \ME_W$. We will choose $Y$ to be positive definite. This guarantees that $\det x[K']\neq0$ for all cliques $K'\in\MK_G\setminus\{K\}$. Indeed, we have $x[K']=Z[K'\cap K]\oplus Y[K'\cap W]$, and both direct summands are positive definite by construction. 

We must also ensure that $\Gamma_C(x)\ne 0$ for all $C\in\MC_{G}$. We claim that this is satisfied for all choices of $Y$ such that $y = \pi_{E(H)}(Y) \in \mathbb{R}^{E(H)}$ avoids a hypersurface ${\cal H} \subseteq \mathbb{R}^{E(H)}$. Then, a point $y \in {\rm int}({\cal E}(H)) \setminus {\cal H}$ exists, because ${\cal E}(H)$ is full-dimensional in $\mathbb{R}^{E(H)}$ while $\MH$ has codimension one.
To prove our claim, we describe the hypersurface ${\cal H}$ explicitly. For each cycle $C \in {\cal C}_G$, let $f_C \in \mathbb{R}[y_e \, : \, e \in E(C) \cap E(H)]$ be the polynomial obtained by substituting 
\[ x_e \, = \, \begin{cases} y_e & e \in E(C)\cap E(H) \\ 
\frac{-1}{p-1} & e \in E(C)\cap E(K) \\ 
0 & e \in E(C) \setminus (E(K) \cup E(H))
\end{cases}
\]
in the cycle polynomial $\Gamma_C \in \mathbb{Z}[x_e \, : \, e \in E(C)]$. 
By construction, we have $\Gamma_C(x) = 0$ if and only if $f_C(y) = 0$.
Since $C$ is a chordless cycle with at least four edges, we have $|E(C)\cap E(K)|\le 1$ and thus the set $E(C) \cap E(H)$ is non-empty. Hence, by Lemma \ref{lem:specializeGamma}, the polynomial $f_C$ is not identically zero.  We now define the hypersurface ${\cal H}\subseteq \R^{E(C)}$ as 
\[\MH=\Big\{y\in \R^{E(C)}: \prod_{C\in \MC_G} f_C(y)=0\Big\}.\]
In summary, as $\text{\rm int}(\ME(H))\not\subseteq \MH$, one can pick $y \in  {\rm int}({\cal E}(H)) \setminus {\cal H}$ and a positive definite matrix $Y\in\ME_W$ such that $\pi_{E(H)}(Y) = y$. Then, one constructs $X$ as in \eqref{eq:ZplusY} and  the point  $x = \pi_E(X) \in \ME(G)$ satisfies the assertions of property (PK) for the clique $K$. It is clear that $x \in \partial \ME(G)$, since it admits no positive definite completion (because $\det x[K]=0$).

\smallskip\noindent
\underline{Now, we show property (PC).} Consider a cycle $C\in \MC_{G}$ and set $U=V(C)$. Again, we want to  find matrices $Z\in \ME_U$ and $Y\in\ME_W$ so that, for  the block-matrix $X$ in (\ref{eq:ZplusY}),  
the point $x=\pi_E(X)$ satisfies (PC), i.e., $x\in\partial \ME(G)$, 
$\Gamma_C(x) = 0$, $\Gamma_{D}(x) \neq 0$ for  $D \in \MC_G \setminus \{C\}$, and $\det x[K] \neq 0$ for $K \in \MK_G$. We first indicate 
conditions that we need to impose on $Z,Y$.

 By Proposition~\ref{proposition:cosboundMET}(i), 
$\partial \ME(C)=\cos(\pi \partial \MET(C))$ holds. 
Consider an edge $e_0\in E(C)$ and the 
 hyperplane $H_{e_0}=\{a\in\R^{E(C)}: a_{e_0}=\sum_{e\in E(C)\setminus \{e_0\}} a_e\}$. This hyperplane  defines the
 facet $\MF_{e_0}=H_{e_0}\cap \MET(C)$ of $\MET(C)$,
and the relative interior of $\MF_{e_0}$ satisfies 
$\text{\rm relint}(\MF_{e_0})\subseteq (0,1)^{E(C)}\cap \partial \MET(C)$. Define the semialgebraic set
$S_C=\cos( \pi \ \text{\rm relint}(\MF_{e_0}))$. Then, $S_C\subseteq (-1,1)^{E(C)}\cap V_\R(\Gamma_C)$ (recall Remark~\ref{rem:cosLn}).
Moreover, $S_C$ has dimension $|C|-1$: the facet $\text{\rm relint}(\MF_{e_0})$ and its image under the cosine map, which is one-to-one, have the same dimension.
The matrix $Z$   will be chosen in order to ensure that the point $z = \pi_{E(C)}(Z)$ lies in the set $ S_C$, but not  in a real variety ${\cal H}$ of dimension at most $|C|-2$. This variety $\MH$ will be described below.  Its dimension ensures that $S_C \setminus {\cal H} \neq \emptyset$. So, one can pick $z\in S_C \setminus {\cal H}$ and $Z\in \ME_U$ such that $z=\pi_{E(C)}(Z)$, which  guarantees that $\Gamma_C(z)=0$. 
 
Once we have fixed $Z$ such that $z=\pi_{E(C)}(Z) \in S_C \setminus {\cal H}$, we shall pick $Y \in\ME_W$ positive definite such that $y = \pi_{E(H)}(Y) \in {\rm int}(\ME_W) \setminus {\cal H}_z$. Here, ${\cal H}_z$ is a hypersurface in $\mathbb{R}^{E(H)}$ that depends on $z$, to be described explicitly below. 

Note that if $\pi_{E(C)}(Z) \in S_C$ and $Y\succ 0$ in \eqref{eq:ZplusY}, then it is guaranteed that $\det x[K] \neq 0$ for all $K \in \MK_G$. Indeed, we have
$x[K]=Z[K\cap U]\oplus Y[K\cap W]$, with $Y[K\cap W]\succ 0$ since $Y\succ 0$. Moreover, $|K\cap U|\le 2$ as $C$ is chordless with  at least 4 edges. 
So, $Z[K\cap U]$ is either the scalar 1 if $|K\cap U|=1$, or the matrix 
$\left(\begin{smallmatrix} 1& z_{uv}\cr z_{uv}& 1\end{smallmatrix}\right)\succ 0$ if $K\cap U=\{u,v\}$ (since $z_{uv}\ne \pm 1$).

The varieties ${\cal H} \subseteq \mathbb{R}^{E(C)}$   and ${\cal H}_z \subseteq\mathbb{R}^{E(H)}$     are determined by the conditions $\Gamma_D(x) \neq 0$ for all cycles $D \in {\cal C}_G \setminus \{C\}$. We first define ${\cal H}$.

Write ${\cal C}_G^C$ for the set of cycles $D\in {\cal C}_G \setminus \{C\}$ with $E(D)\cap E(H)=\emptyset$.
Let $D \in {\cal C}_G^C$. Consider the polynomial $g_D \in \mathbb{R}[z_e \, : \, e \in E(D) \cap E(C)]$ obtained by specializing
\[ x_e \, = \, \begin{cases} z_e & e \in E(D)\cap E(C) \\ 
0 & e \in E(D) \setminus E(C)
\end{cases}\]
in the cycle polynomial $\Gamma_D \in \mathbb{Z}[x_e \, : \, e \in E(D)]$. By construction, we have that $\Gamma_D(x) = 0$ if and only if $g_D(z) = 0$, where $x = 
\pi_{E(D)}(X)$ with $X$ as in \eqref{eq:ZplusY}. 
Consider the   variety  $\MH_D=V(\Gamma_C) \cap V(g_D)$.  We have strict inclusion  $\MH_D\subsetneq V(\Gamma_C)$, because   
 $g_D(z)$ involves a strict subset of the variables of $\Gamma_C(z)$ (since $E(D)\not\subseteq E(C)$). As $\Gamma_C$ is an irreducible polynomial, its variety $V(\Gamma_C)$ is irreducible and thus the variety $\MH_D$ must have smaller dimension than $V(\Gamma_C)$. Hence, the dimension of $\MH_D$ is at most $|C|-2$, and thus also the dimension of the real variety $\MH_D\cap\R^{E(C)}$ is at most $|C|-2$.
We now define the  real variety $\MH\subseteq \R^{E(C)}$ as  
\[ {\cal H} \, =  \Big\{z\in V_\R(\Gamma_C): \prod_{D \in {\cal C}^C_G} g_D(z) = 0\Big\}
=\bigcup_{D\in\MC^C_G} (\MH_D\cap\R^{E(C)}).\]
By the above reasoning,   $\dim {\cal H}  \leq |C| - 2$, and thus $S_C \setminus {\cal H} \neq \emptyset$ 
 (since $\dim S_C =|C|-1$).

The remaining cycles serve to specify the hypersurface ${\cal H}_z\subseteq \R^{E(H)}$. Let ${\cal C}_G^H \subseteq \MC_G \setminus \{C\}$ be the set of cycles $D\in\MC_G$ with $|E(D)\cap E(H)|\ge 1$.
For each $D \in \MC_G^H$, define the polynomial  $h_{D,z} \in \mathbb{R}[y_e \, : \, e \in E(D) \cap E(H)]$ obtained by specializing 
\[ x_e \, = \, \begin{cases} z_e & e \in E(D)\cap E(C) \\ 
y_e & e \in E(D)\cap E(H) \\
0 & e \in E(D) \setminus (E(C) \cup E(H))\\
\end{cases}
\]
in the cycle polynomial $\Gamma_D \in \mathbb{Z}[x_e \, : \, e \in E(D)]$. Here, $z_e$ is the coordinate of $\pi_{E(C)}(Z)$ labeled by $e\in E(C)$ for our fixed matrix $Z$ from above. By Lemma \ref{lem:specializeGamma}, $h_{D,z}$ is not the zero polynomial. 
The hypersurface ${\cal H}_z $ is defined as 
\[ {\cal H}_z \, = \, \Big\{ y \in  \mathbb{R}^{E(H)} \, : \, \prod_{D \in {\cal C}^H_G} h_{D,z}(y)  = 0 \Big\}. \]
The set ${\rm int}(\ME(H)) \setminus {\cal H}_z$ is non-empty, since $\ME(H)$ is full-dimensional while $\MH_z$ has codimension 1.
 So, pick $y\in {\rm int}(\ME(H)) \setminus {\cal H}_z$, pick $Y\in \ME_W$ positive definite such that $y=\pi_{E(H)}(Y)$, and consider the matrix $X$ from \eqref{eq:ZplusY}. Then, $x=\pi_E(X)\in\ME(G)$ satisfies the conditions from property (PC). 
Remains only to verify that  $x\in\partial \ME(G)$. Indeed, if $x\in \text{\rm int}(\ME(G))$,
then $x=\pi_E(\widehat X)$ for some positive definite $\widehat X\in \ME_n$. But then this would imply
 that $z=\pi_{E(C)}(\widehat X)$ belongs to $\text{\rm int}(\ME(C))$, contradicting the fact that we selected $z\in S_C\subseteq \partial \ME(C)$. 

This concludes the proof of Proposition \ref{prop:tool-boundary} for the case of the elliptope $\ME(G)$, where we ask for a point $x\in \partial \ME(G)$ in properties (PK) and (PC).

 Consider now the case of the superset $\tME(G)$, where we ask for a point $x\in\partial \tME(G)$ in properties (PK) and (PC). We use the fact that 
the boundary of $\tME(G)$ is given by
\[\partial \tME(G) =(\partial S(G) \cap \cos(\pi\MET(G))) \cup (S(G)\cap\cos(\pi \partial \MET(G))).\]
We  show that the point $x\in \partial \ME(G)$ constructed in the above proof, in fact, also belongs to $\partial \tME(G)$. First of all, $x$ clearly belongs to $\tME(G)$. Moreover, for property (PK), $x$ satisfies $\det x[K]=0$ for some   $K\in \MK_G$, implying $x\in\partial S(G)\cap \tME(G)\subseteq \partial \tME(G)$.
For property (PC),   $x=\cos(\pi a) $ with  $a\in \partial \MET(G)$, implying
 $x\in \cos(\pi\partial \MET(G))\cap \tME(G)\subseteq \partial \tME(G)$.
This concludes the proof of Proposition \ref{prop:tool-boundary} for the case of $\tME(G)$.

\subsection*{Acknowledgements}
Our work has been supported by the European Union's HORIZON-MSCA-2023-DN-JD programme under the Horizon Europe (HORIZON) Marie Sklodowska-Curie Actions, grant agreement
101120296 (TENORS).

\bibliographystyle{plain}

\bibliography{bibliography.bib}

@article{BS-2020,
    author={G. Blekherman and K. Shu},
    title={Sums of squares and sparse semidefinite programming},
    journal={SIAM Journal on Applied Algebra and Geometry},
    volume={5},
    number={4},
    pages={651--674},
    year={2020}
}

@inproceedings{Pataki,
    author={G. Pataki},
    title={The geometry of semidefinite programming},
    booktitle={Handbook of Semidefinite Programming},
    editor={H. Wolkowicz and R. Saigal and L. Vandenberghe},
    publisher={Springer},
    series={International Series in Operations Research \& Management Sciences},
    volume={27},
    year={2000}
 }

@article{Belk-Connelly,
    author={M. Belk and R. Connelly},
    title={Realizability of graphs},
    journal={Discrete and Computational Geometry},
    volume={37},
    pages={125--137},
    year={2007}
}

@article{Belk,
    author={M. Belk},
    title={Realizability of graphs in three dimensions},
    journal={Discrete and Computational Geometry},
    volume={37},
    pages={139--162},
    year={2007}
}

@article{LV-MP-2014,
    author={M. Laurent and A. Varvitsiotis},
    title={A new graph parameter related to bounded rank positive semidefinite matrix completions},
    journal={Mathematical Programming},
    volume={145},
    number={1-2},
    pages={291--325},
    year={2014}
}

@article{NLV-JCTB,
    author={M. E.-Nagy and M. Laurent and A. Varvitsiotis},
    title={Forbidden minor characterizations for low-rank optimal solutions to semidefinite programs over the elliptope},
    journal={Journal of Combinatorial Theory Series  B},
    volume={108},
    pages={40--80},
    year={2014}
}

@inproceedings{NLV-Fields,
     author={M. E.-Nagy and M. Laurent and A. Varvitsiotis},
     title={Complexity of the positive semidefinite matrix completion problem with a rank constraint},
     booktitle={Discrete Geometry and Optimization},
     editor={K. Bezdek and  A. Deza and Y. Ye},
     series={Fields Institute Communications},
          publisher={Springer},
     volume={69},
     pages={105--120},
     year={2013}    
}

@article{Solus-Uhler-Yoshida,
    author={L. Solus and C. Uhler and R. Yoshida},
    title={Extremal positive semidefinite matrices whose sparsity pattern is given by graphs without {$K_5$} minors},
    journal={Linear Algebra and its Applications},
    volume={509},
    pages={247--275},
    year={2016}
}

@article{Laurent-Poljak_1995,
    author={M. Laurent and S. Poljak},
    title={On a positive semidefinite relaxation of the cut polytope},
    journal={Linear Algebra and its Applications},
    volume={223-224},
    pages={439--461},
    year={1995}
}

@inproceedings{Laurent_2001,
    author={M. Laurent},
    title={Matrix Completion Problems},
    booktitle={Encyclopedia of Optimization},
    editor={C.A. Floudas and P.M. Pardalos},
    publisher={Springer},
    pages={1311--1319},
    year={2001} 
    }

@article{Laurent_1997,
    author={M. Laurent},
    title={Cuts, matrix completions and graph rigidity},
    journal={Mathematical Programming},
    volume={79},
    pages={255--283},
    year={1997}
}

@article{Laurent_1998,
    author={M. Laurent},
    title={A connection between positive semidefinite and Euclidean distance matrix completion problems},
    journal={Linear Algebra and its Applications},
    volume={273},
    pages={9--22},
    year={1998}
}

@article{GW_1995,
    author={M.X. Goemans and D.P. Williamson},
    title={Improved approximation algorithms for maximum cuts and satisfiability problems using semidefinite programming},
    journal={Journal of the ACM},
    volume={42},
    pages={1115--1145},
    year={1995}
}

@book{BPT-2012,
   author={G. Blekherman and P.A. Parrilo and R.R. Thomas},
   title={Semidefinite Optimization and Convex Algebraic Geometry},
   publisher={American Mathematical Society},
   series={MOS-SIAM Series on Optimization},
   volume={13},
   year={2012}
}

@book{Deza-Laurent-1997,
     author={M.M. Deza and M. Laurent},
     title={Geometry of Cuts and Metrics},
     publisher={Springer},
     series={Algorithms and Combinatorics},
          volume={15},
     year={1997}
}

@article{Johnson-McKee,
    author={C.R. Johnson and T.A. McKee},
    title={Structural conditions for cycle completable graphs},
    journal={Discrete Mathematics},
    volume={159},
    issue={1-3},
    pages={155--160},
    year={1996}
}

@article{Barahona-Mahjoub,
    author={F. Barahona and R. Mahjoub},
    title={On the cut polytope},
    journal={Mathematical Programming},
    volume={36},
    pages={157--173},
    year={1986}
}

@article{GJSW_1984,
    author={R. Grone and C.R. Johnson and E.N. S\'a and H. Wolkowicz},
    title={Positive definite completions of partial hermitian matrices},
    journal={Linear Algebra and its Applications},
    volume={58},
    pages={109--124},
    year={1984}
}

@article{BJL-cycle-completability,
    author={W.W. Barrett and C.R. Johnson and R. Loewy},
    title={The real positive definite completion problem: cycle completability},
    journal={Memoirs of the American Mathematical Society},
    volume={122},
    year={1996}
}

@article{Barrett1993realpositivedefinite,
 author = {W.W. Barrett  and C.R. Johnson and P. Tarazaga},
 title = {The real positive definite completion problem for a simple cycle},
 journal = {Linear Algebra and its Applications},
 volume={192},
 pages={3--31},
 year={1993}
}

@article{Laurent1997realpositivesemidefinite,
 author = {M. Laurent},
 title = {The real positive semidefinite completion problem for series-parallel graphs},
 journal = {Linear Algebra and its Applications},
 volume = {252},
 pages = {347--366},
 year = {1997}
}

@article{Sturmfels2010MultivariateGaussians,
 author = {B. Sturmfels and C. Uhler},
 title = {Multivariate {Gaussians}, semidefinite matrix completion, and convex algebraic geometry},
journal = {Annals of the Institute of Statistical Mathematics},
 volume = {62},
 number = {4},
 pages = {603--638},
 year = {2010},
}

@article{mascarin2025lissajousvarieties,
url = {https://doi.org/10.1515/advgeom-2026-0008},
title = {Lissajous varieties},
author = {Francesco Maria Mascarin and Simon Telen},
pages = {263--282},
volume = {26},
number = {2},
journal = {Advances in Geometry},
doi = {doi:10.1515/advgeom-2026-0008},
year = {2026},
lastchecked = {2026-04-15},
eprint={2509.06844},
archivePrefix={arXiv},
primaryClass={math.AG},
url={https://arxiv.org/abs/2509.06844}
}

@article{sinn2015algebraicboundary,
 author = {R. Sinn},
 title = {Algebraic boundaries of convex semi-algebraic sets},
 journal = {Research in the Mathematical Sciences},
  volume = {2},
  number={3},
  year = {2015}
}

@book{cox2025ideals,
 author = {D.A. Cox and J. Little and D. O'Shea},
 title = {Ideals, varieties, and algorithms. {An} introduction to computational algebraic geometry and commutative algebra (to appear)},
 edition = {5th edition},
 isbn = {978-3-031-91840-7; 978-3-031-91843-8; 978-3-031-91841-4},
 year = {2025},
 publisher = {Cham: Springer},
 language = {English},
 zbMATH = {8074064}
}

@article{BelAfiaMeroniTelen2025Chebyshev,
  author    = {Z. Bel-Afia and C. Meroni and S. Telen},
  title     = {Chebyshev varieties},
  journal   = {Mathematics of Computation},
  year      = {2025}
}

@book{CoxLittleOShea2005UsingAG,
  title     = {Using Algebraic Geometry},
  author    = {D.A. Cox and J. Little and D. O'Shea},
  publisher = {Springer},
  series    = {Graduate Texts in Mathematics},
  volume    = {185},
  year      = {2005},
  address   = {New York}
}

\end{document}